\definecolor{mydarkblue}{RGB}{0,0,155}
\author{
  \textbf{Matthias Schötz}
  \thanks{Boursier de l'ULB, \href{mailto:Matthias.Schotz@ulb.ac.be}{\texttt{Matthias.Schotz@ulb.ac.be}}.
  This work was supported by the Fonds de la Recherche Scientifique (FNRS) and the Fonds Wetenschappelijk
  Onderzoek - Vlaaderen (FWO) under EOS Project n$^0$30950721.}\\
  Département de Mathématiques\\
  Université libre de Bruxelles
}
\renewcommand{\mathbb}[1]{\mathbbm{#1}}
\newcommand{\refitem}[1] {\textit{\ref{#1}.)}}
\numberwithin{equation}{section}
\let\originalleft\left
\let\originalright\right
\renewcommand{\left}{\mathopen{}\mathclose\bgroup\originalleft}
\renewcommand{\right}{\aftergroup\egroup\originalright}
\newcommand{\lemmachairxname}{Lemma}
\newcommand{\propositionchairxname}{Proposition}
\newcommand{\theoremchairxname}{Theorem}
\newcommand{\corollarychairxname}{Corollary}
\newcommand{\definitionchairxname}{Definition}
\newcommand{\examplechairxname}{Example}
\newcommand{\proofchairxname}{Proof}
\SetString{\lemmachairxname}{Lemma}
\SetString{\propositionchairxname}{Proposition}
\SetString{\theoremchairxname}{Theorem}
\SetString{\corollarychairxname}{Corollary}
\SetString{\definitionchairxname}{Definition}
\SetString{\examplechairxname}{Example}
\SetString{\proofchairxname}{Proof}
\newtheorem{lemma}{\lemmachairxname}[section]
\newtheorem{proposition}[lemma]{\propositionchairxname}
\newtheorem{theorem}[lemma]{\theoremchairxname}
\newtheorem{corollary}[lemma]{\corollarychairxname}
\newtheorem{definition}[lemma]{\definitionchairxname}
\newtheorem{example}[lemma]{\examplechairxname}
\def\theorem@checkbold{}
\theoremstyle{nonumberplain}
\newtheorem{proof}{\proofchairxname}
\newcommand{\RE}             {\mathsf{Re}}
\newcommand{\IM}             {\mathsf{Im}}
\newcommand{\I}              {\mathrm{i}}
\newcommand{\cc}[1]          {\overline{{#1}}}
\newcommand{\Unit}           {\mathbb{1}}
\newcommand{\id}              {\mathrm{id}}
\newcommand{\argument}       {\ignorespaces{\,\cdot\,}\ignorespaces}
\DeclarePairedDelimiter{\abs}{\lvert}{\rvert}
\DeclarePairedDelimiter{\norm}{\lVert}{\rVert}
\newcommand{\Stetig}         {\mathscr{C}}
\DeclareMathOperator{\spec}    {\mathrm{spec}}
\DeclarePairedDelimiter{\ordinarySet}{\{}{\}}
\newcommand{\CC}{\mathbb{C}}
\newcommand{\RR}{\mathbb{R}}
\newcommand{\NN}{\mathbb{N}}
\renewcommand{\SS}{\mathbb{S}}
\newcommand{\set}[3][]{\ordinarySet[#1]{\,#2 \;#1|\; #3\,}}
\newcommand{\Hermitian}{\textup{H}}
\newcommand{\seminorm}[3][]{\norm[#1]{#3}_{#2}}
\newcommand{\metric}{\mathrm{d}}
\newcommand{\at}[2][]{#1|_{#2}}
\newcommand{\Sus}{\textit{Su}$^*$}
\newcommand{\bd}{\mathrm{bd}}
\newcommand{\pr}{\mathrm{pr}}
\newcommand{\neu}[1]{\uline{#1}}
\newcommand{\genCstar}[1]{\langle\!\langle\,#1\,\rangle\!\rangle_{C^*}}
\title{%
\texorpdfstring{%
Universal Continuous Calculus for \Sus\=/Algebras%
}{%
Universal Continuous Calculus for Su*-Algebras%
}%
}
\date{November 2020}
\begin{document}
\begin{onehalfspace}
\allowdisplaybreaks

\maketitle

\begin{abstract}
  Universal continuous calculi are defined and it is shown that for every finite tuple
  of pairwise commuting Hermitian elements of a \Sus\=/algebra
  (an ordered $^*$\=/algebra that is symmetric, i.e. ``strictly'' positive elements are 
  invertible, and uniformly complete), such a universal continuous calculus exists. This generalizes
  the continuous calculus for $C^*$\=/algebras to a class of generally unbounded ordered $^*$\=/algebras.
  On the way, some results about $^*$\=/algebras of continuous functions on locally 
  compact spaces are obtained. The approach used throughout is rather elementary
  and especially avoids any representation theory.
\end{abstract}




\section{Introduction}
\label{sec:Introduction}
The term $^*$\=/algebra will always refer to a unital associative (but not necessarily commutative)
$^*$\=/algebra over the scalar field of complex numbers. \Sus\=/algebras have been introduced
and examined in \cite{schoetz:EquivalenceOrderAlgebraicStructure}.
It was shown that they preserve many basic properties of $C^*$\=/algebras, but may have unbounded elements,
and that the commutative \Sus\=/algebras are complexifications of complete $\Phi$\=/algebras.
Essentially, they are $^*$\=/algebras endowed with a partial order on the Hermitian elements,
which are complete with respect to a metric topology induced by the order and fulfil one of
several equivalent additional conditions like existence of absolute values, of square roots of positive elements, 
of certain finite suprema or infima, or of inverses of Hermitian elements that are ``strictly'' positive.

Like for $C^*$\=/algebras, one can now ask whether these constructions can be generalized
to a well-defined way to apply more or less arbitrary continuous functions to algebra elements.
More precisely, given $N\in \NN$ and pairwise commuting Hermitian elements $a_1,\dots,a_N$ of a 
\Sus\=/algebra $\mathcal{A}$, does there
exist a unital $^*$\=/homomorphism $\Phi$ from a $^*$\=/algebra $\mathcal{I}$ of continuous 
complex-valued functions on 
$\RR^N$, or on a closed subset of $\RR^N$, to $\mathcal{A}$, that maps the $N$ coordinate functions
to $a_1,\dots,a_N$? If $N=1$ and $a\coloneqq a_1$, then $\Phi$ would map
polynomial functions to the corresponding polynomials of $a$. Similarly, one would expect
that e.g. the absolute value function $\abs{\argument} \colon \RR\to{[0,\infty[}$ is mapped to the
absolute value of $a$, or, if $a$ is positive, the square root function 
$\sqrt{\argument} \colon {[0,\infty[} \to {[0,\infty[}$ should be mapped to the square root of $a$.
Likewise, for $N=2$, the function ${\max} \colon \RR^2 \to \RR$, $(x_1,x_2) \mapsto \max \{ x_1,x_2 \}$
should be mapped to the supremum $a_1 \vee a_2$. As a continuous calculus necessarily
maps to a commutative unital $^*$\=/subalgebra of $\mathcal{A}$, the continuous calculus for
$\Phi$\=/algebras that has been constructed in \cite{buskes.pagter.vanRooij:FunctionalCalculusOnRieszSpaces}
already provides a solution for this in some cases.

Aside from the existence of such a unital $^*$\=/homomorphism $\Phi$, one would also
want $\Phi$ to be uniquely determined. Once the $^*$\=/algebra of functions
$\mathcal{I}$ is fixed, uniqueness of $\Phi$ can typically be obtained e.g. from the
generalized Stone-Weierstraß Theorem of \cite{schoetz:StoneWeierstrassTheoremsForRieszIdeals}.
But finding a good choice for $\mathcal{I}$ is not easy: Following
\cite[Thm.~4.12]{buskes.pagter.vanRooij:FunctionalCalculusOnRieszSpaces},
the algebra $\mathcal{I}$ can be chosen as the $^*$\=/algebra of all polynomially 
bounded continuous functions on $\RR^N$ and it was shown that for this choice, $\Phi$ is uniquely determined. 
Yet this way, neither the square root function $\sqrt{\argument} \colon {[0,\infty[} \to {[0,\infty[}$
nor the exponential function $\exp\colon \RR\to\RR$ are elements of $\mathcal{I}$ in the one-dimensional
case.
While one might not be able to construct continuous calculi for all Hermitian elements $a$
of $\mathcal{A}$ that allow to give $\sqrt{a}$ and $\exp(a)$ a well-defined meaning (e.g. if
$a$ is not positive in the first case, or if $a$ is ``not bounded enough'' in some sense to let $\exp(a)$
be an element of $\mathcal{A}$) there certainly are cases where this is possible. In
general, there might be several different continuous calculi for the same $N$\=/tuple
$a_1,\dots,a_N$, that differ in the choice of the algebra of functions on which
they are defined.

For $C^*$\=/algebras, there exists an easy natural choice for $\mathcal{I}$ at least in
the one-dimensional case: One defines $\mathcal{I}$ to be 
the $C^*$\=/algebra of all continuous functions on the compact spectrum of $a$.
This allows the application of all continuous functions with domain of definition
larger than $\spec(a)$, especially also the application of the square root function if
$a$ is positive, which is the case if and only if $\spec(a) \subseteq {[0,\infty[}$.
It will be shown that this natural choice is essentially also viable for \Sus\=/algebras, 
but there arises an additional problem: As elements of \Sus\=/algebras
may be unbounded, the spectrum $\spec(a)$ is no longer necessarily compact,
hence there typically are several different $^*$\=/algebras in between
the $^*$\=/algebra $\Stetig\big(\spec(a)\big)$ of all continuous functions and 
its $^*$\=/subalgebra of polynomially bounded ones.
The best choice will of course be the largest possible one.

This leads to the concept of a universal continuous calculus
in Definition~\ref{definition:univcalculi}:
There might be many continuous calculi for the same tuple $a_1,\dots,a_N$,
and the universal continuous calculus (if it exists) is the one through which all
others can be factored in a natural way. Consequently, the universal continuous
calculus is the most general one, and the algebra of functions on which it is
defined carries information about the elements $a_1,\dots,a_N$.
The main result, Theorem~\ref{theorem:univcc}, establishes the existence of such a 
universal continuous calculus for all $N$\=/tuples of pairwise commuting Hermitian 
elements of a \Sus\=/algebra. This will be proven by a rather explicit 
construction using elementary methods. Moreover, it will be shown that the universal
continuous calculus is an order embedding of a \Sus\=/algebra of continuous functions
into a general \Sus\=/algebra.
This allows to prove identities or estimates involving finitely
many pairwise commuting Hermitian elements and their absolute values, square roots, etc.,
by discussing only the special and easy case that they are given by continuous functions.
It also yields a representation of certain \Sus\=/algebras for which a surjective
continuous calculus exists.
This is roughly analogous to the result provided by \cite{buskes.vanRooij:SmallRieszSpaces}
for Riesz spaces. Note, however, that the approach taken here is quite different:
While \cite{buskes.pagter.vanRooij:FunctionalCalculusOnRieszSpaces} applies the
representation theorem obtained in \cite{buskes.vanRooij:SmallRieszSpaces} for
``small'' Riesz spaces in order to derive a functional calculus, the functional
calculus for \Sus\=/algebras here will be obtained by extending the polynomial
calculus and can then be used to obtain a representation theorem for ``small'' commutative \Sus\=/algebras.

The article is organized as follows: After recapitulating some preliminaries and fixing the
notation in the next Section~\ref{sec:preliminaries}, Section~\ref{sec:ccdef} discusses
the fundamental definitions that are relevant for this article, especially (universal)
continuous calculi and commutative \Sus\=/algebras of finite type.
In Section~\ref{sec:propersus}, certain well-behaved commutative \Sus\=/algebras of
continuous functions will be examined in some detail.
The construction of universal continuous calculi is presented in Section~\ref{sec:cccon}
and results in the main Theorem~\ref{theorem:univcc}. An application of this is a representation
theorem for commutative \Sus\=/algebras of finite type, which is discussed in the final 
Section~\ref{sec:rep} and which gives an algebraic characterization of those ordered $^*$\=/algebras
of continuous functions on a closed subset of $\RR^N$ that contain all uniformly bounded 
continuous functions and at least one proper function.
\section{Preliminaries} \label{sec:preliminaries}
The notation essentially follows~\cite{schoetz:EquivalenceOrderAlgebraicStructure},
where the fundamental properties of ordered $^*$\=/algebras
and especially \Sus\=/algebras have been discussed in more detail and more generality.
Natural numbers are $\NN = \{1,2,3,\dots\}$ and $\NN_0 \coloneqq \NN\cup\{0\}$, and 
the fields of real and complex numbers are denoted by $\RR$ and $\CC$, respectively.
Let $X$ be a set, then $\id_X\colon X\to X$ is $x\mapsto \id_X(x):= x$. A \neu{partial order}
on $X$ is a reflexive, transitive and anti-symmetric relation. If $X$ and $Y$ are both 
endowed with a partial order, then a map $\Phi \colon X \to Y$ is called \neu{increasing}
if $\Phi(x) \le \Phi(x')$ for all $x,x'\in X$ with $x\le x'$. If $\Phi$ is
injective, increasing and also fulfils $x \le x'$ for all $x,x'\in X$ with $\Phi(x) \le \Phi(x')$,
then $\Phi$ is called an \neu{order embedding}.

A \neu{$^*$\=/algebra} is a unital associative algebra over the field $\CC$ that
is endowed with an anti-linear involution $\argument^*$ fulfilling $(ab)^* = b^*a^*$
for all its elements $a$ and $b$. 
The unit of a $^*$\=/algebra $\mathcal{A}$ is denoted by $\Unit$, or, more explicitly,
by $\Unit_\mathcal{A}$, and automatically fulfils $\Unit^* = \Unit$. Note that it is not
required that $0 \neq \Unit$, but the only case where $0 = \Unit$ is 
the trivial $^*$\=/algebra $\{0\}$. An element $a\in \mathcal{A}$ is called \neu{Hermitian}
if $a=a^*$, and $\mathcal{A}_\Hermitian \coloneqq \set{a\in \mathcal{A}}{a=a^*}$ denotes the
real linear subspace of Hermitian elements of $\mathcal{A}$.
Every $a\in \mathcal{A}$ can be decomposed as $a = \RE(a) + \I\, \IM(a)$ with uniquely determined
Hermitian real and imaginary parts of $a$, which are explicitly given by $\RE(a) = \frac{1}{2}(a+a^*)$ and $\IM(a) = \frac{1}{2\I}(a-a^*)$.
Moreover, if $S\subseteq \mathcal{A}$ is stable under the $^*$\=/involution, then its \neu{commutant}
$S' \coloneqq \set[\big]{a\in \mathcal{A}}{\forall_{s\in S}:as=sa}$
and its \neu{bicommutant} $S''$ are \neu{unital $^*$\=/subalgebras} of $\mathcal{A}$, i.e.
unital subalgebras that are stable under the $^*$\=/involution and thus are $^*$\=/algebras again.
Note that $S''$ is commutative if all elements in $S$ are pairwise commuting.
The unital $^*$\=/subalgebra \neu{generated} by a subset $S$ of $\mathcal{A}$ is the smallest (with respect to $\subseteq$)
unital $^*$\=/subalgebra of $\mathcal{A}$ that contains $S$, and can explicitly be constructed as the set
of all finite sums of finite products of elements of $S \cup \set{s^*}{s\in S} \cup \set{\lambda \Unit}{\lambda \in \CC}$.
A linear subspace $\mathcal{I}$ of a $^*$\=/algebra $\mathcal{A}$ is called a \neu{$^*$\=/ideal} if it is stable
under the $^*$\=/involution and fulfils $ab \in \mathcal{I}$ for all $a\in \mathcal{A}$ and all $b\in \mathcal{I}$,
and then automatically also fulfils $ba \in \mathcal{I}$ for all $a\in \mathcal{A}$ and all $b\in \mathcal{I}$.
Given two $^*$\=/algebras $\mathcal{A}$ and $\mathcal{B}$, then a map $\Phi \colon \mathcal{A} \to \mathcal{B}$
is a \neu{unital $^*$\=/homomorphism} if it is linear, maps $\Unit_\mathcal{A}$ to $\Unit_B$ and fulfils $\Phi(a\tilde{a}) = \Phi(a) \Phi(\tilde{a})$
and $\Phi(a^*) = \Phi(a)^*$ for all $a, \tilde{a}\in \mathcal{A}$. Its kernel $\ker \Phi \coloneqq \set{a\in\mathcal{A}}{\Phi(a)=0}$
automatically is a $^*$\=/ideal. As an example, $\CC[t_1,\dots,t_N]$ denotes the $^*$\=/algebra of polynomials in $N\in \NN$ Hermitian arguments,
i.e.~its $^*$\=/involution is given by complex conjugation of the coefficients so that $\CC[t_1,\dots,t_N]_\Hermitian \cong \RR[t_1,\dots,t_N]$. This is the 
$^*$\=/algebra that is freely generated by $N$ pairwise commuting Hermitian elements $t_1,\dots,t_N$,
so for every $^*$\=/algebra $\mathcal{A}$ and every $N$-tuple of pairwise commuting $a_1,\dots,a_N \in \mathcal{A}_\Hermitian$
there exists a unique unital $^*$\=/homomorphism from $\CC[t_1,\dots,t_N]$ to $\mathcal{A}$ that maps
$t_n$ to $a_n$ for all $n\in \{1,\dots,N\}$. This unital $^*$\=/homomorphism is denoted as usual as evaluating a
polynomial on $a_1,\dots,a_N$, i.e.~as $\CC[t_1,\dots,t_N] \ni \pi \mapsto \pi(a_1,\dots,a_N) \in \mathcal{A}$.
The idea of a functional calculus is to extend this ``polynomial calculus'' to more interesting $^*$\=/algebras like $\Stetig(\RR^N)$,
the continuous complex valued functions on $\RR^N$ with the pointwise operations.

An \neu{ordered $^*$\=/algebras} is a $^*$\=/algebra $\mathcal{A}$ that carries a partial order $\le$ on $\mathcal{A}_\Hermitian$ such that
\begin{equation}
  a+c\le b+c\,,\quad\quad
  d^*a\,d \le d^*b\,d \quad\quad\text{and}\quad\quad
  0 \le \Unit \label{eq:orderedstaralg}
\end{equation}
hold for all $a,b,c\in \mathcal{A}_\Hermitian$ with $a\le b$ and all $d\in \mathcal{A}$. One writes $\mathcal{A}_\Hermitian^+ \coloneqq \set{a\in \mathcal{A}_\Hermitian^+}{a\ge 0}$
for the convex cone of \neu{positive} Hermitian elements of $\mathcal{A}$, and it is not hard to check that the order $\le$ on $\mathcal{A}_\Hermitian$
is completely determined by $\mathcal{A}_\Hermitian^+$. As $\Unit \in \mathcal{A}_\Hermitian^+$, $a + b \in \mathcal{A}_\Hermitian^+$ for all $a,b\in \mathcal{A}_\Hermitian^+$
and $c^*a\,c \in \mathcal{A}_\Hermitian$ for all $a\in \mathcal{A}_\Hermitian^+$ and all $c\in \mathcal{A}$, this set $\mathcal{A}_\Hermitian^+$ is a \neu{quadratic module}.
A unital $^*$\=/subalgebra $\mathcal{B}$ of an ordered $^*$\=/algebra $\mathcal{A}$ will always be endowed with the order that $\mathcal{B}_\Hermitian$
inherits from $\mathcal{A}_\Hermitian$.
If $\mathcal{A}$ and $\mathcal{B}$ are two ordered $^*$\=/algebras and $\Phi \colon \mathcal{A} \to \mathcal{B}$
a unital $^*$\=/homomorphism, then $\Phi$ is called \neu{positive} if its restriction to a map from $\mathcal{A}_\Hermitian$ to $\mathcal{B}_\Hermitian$
is increasing, or equivalently, if $\Phi(a) \in \mathcal{B}^+_\Hermitian$ for all $a\in \mathcal{A}^+_\Hermitian$.

For example, the $^*$\=/algebra $\CC[t_1,\dots,t_N]$ becomes an ordered $^*$\=/algebra by endowing $\CC[t_1,\dots,t_N]_\Hermitian$ with the pointwise order, 
which is the order for which $\CC[t_1,\dots,t_N]_\Hermitian^+$
is the set of all $\pi \in \CC[t_1,\dots,t_N]_\Hermitian$ that fulfil $\pi(x_1,\dots,x_N) \ge 0$ for all $x\in \RR^N$.
One might ask now whether the canonical unital $^*$\=/homomorphism $\CC[t_1,\dots,t_N] \ni \pi \mapsto \pi(a_1,\dots,a_N) \in \mathcal{A}$
is positive for every choice of pairwise commuting Hermitian elements $a_1,\dots,a_N$ of any ordered $^*$\=/algebra $\mathcal{A}$
and with respect to the pointwise order on $\CC[t_1,\dots,t_N]$. While this can be shown to be true in the one-dimensional case
by expressing every $\pi \in \CC[t]^+_\Hermitian$ as a sum of polynomials of the form $\rho^* \rho$ with $\rho\in \CC[t]$
(which is possible as a consequence of the fundamental theorem of algebra), it does no longer hold if $N\ge 2$:
Indeed, the identity map $\id_{\CC[t_1,\dots,t_N]}$
is not positive as a unital $^*$\=/homomorphism from $\CC[t_1,\dots,t_N]$ with the pointwise order
to $\CC[t_1,\dots,t_N]$ with the order for which only elements of the form $\sum_{k=1}^K \rho_k^*\rho_k$ with $K \in \NN$ and 
$\rho_1,\dots,\rho_K \in \CC[t_1,\dots,t_N]$ are positive: For example, the famous Motzkin polynomial
$t_1^4 t_2^2 + t_1^2t_2^4 - 3t_1^2t_2^2 + 1$ is pointwise positive but cannot be expressed as a sum of squares.
This already indicates that the theory of quadratic modules on polynomial algebras in real algebraic geometry
is highly non-trivial. Important results are the Positivstellensatz by Krivine and Stengle,
\cite{krivine:positivstellensatz} and \cite{stengle:positivstellensatz}, which gives an algebraic
description of polynomials that are pointwise positive or strictly positive on a semi-algebraic set,
and its variants in compact cases by Handelman \cite{handelman:representingPolynomialsByPositiveLinearFunctionsOnCompactConvexPolyhedra},
Schmüdgen \cite{schmuedgen:KMomentProblemForCompactSemiAlgebraicSets}, and Putinar \cite{putinar:positivstellensatz}.
At least in some special cases, such algebraic descriptions can also be obtained constructively by
exploiting a classical result of P{\'o}lya, see e.g.~\cite{averkov:ConstructiveProofsOfSomePositivstellensaetze}.
In a similar way, \cite{habicht:ZerlegungStriktDefiniterFormen} provides a constructive solution
to Hilbert's 17th problem in the special case of strictly positive polynomials. These constructive
methods will be helpful later on in the proof of Lemma~\ref{lemma:rationalcalculus} for
extending the polynomial calculus to a continuous calculus without having to resort to the full might
of the Positivstellensatz.

Another important example of ordered $^*$\=/algebras is $\Stetig(X)$, the $^*$\=/algebra
of complex-valued continuous functions on a topological space $X$ with the 
pointwise operations and pointwise comparison (if $X=\emptyset$, then $\Stetig(X) \cong \{0\}$). Similarly, every $C^*$\=/algebra
becomes an ordered $^*$\=/algebra in a natural way by declaring those Hermitian elements
to be positive whose spectrum is a subset of ${[0,\infty[}$. These are two
classes of examples of \Sus\=/algebras, which will be discussed 
in the following (see \cite{schoetz:EquivalenceOrderAlgebraicStructure} for details):

An ordered $^*$\=/algebra $\mathcal{A}$ is called \neu{Archimedean} if $\mathcal{A}_\Hermitian$ is an Archimedean
ordered vector space, i.e.~if the following condition is fulfilled: Whenever $a \le \epsilon b$
holds for one $a \in \mathcal{A}_\Hermitian$, one $b\in \mathcal{A}_\Hermitian^+$ and all $\epsilon \in {]0,\infty[}$,
then $a \le 0$. It is important to point out that this is not related to the notion of an Archimedean
quadratic module in real algebraic geometry.
Using the convention that $-\infty \Unit \le a \le \infty \Unit$ and $a \le \infty^2 \Unit$ are true for
all $a\in \mathcal{A}_\Hermitian$, one can define on every Archimedean ordered $^*$\=/algebra $\mathcal{A}$ the map 
$\seminorm{\infty}{\argument} \colon \mathcal{A} \to [0,\infty]$,
\begin{equation}
  a \mapsto \seminorm{\infty}{a} \coloneqq \min \set[\big]{\lambda \in {[0,\infty]}}{a^*a \le \lambda^2 \Unit}
  \label{eq:inftyseminorm}
\end{equation}
and the subset $\mathcal{A}^\bd\coloneqq \set[\big]{a\in\mathcal{A}}{\seminorm{\infty}{a}<\infty}$ of \neu{uniformly bounded} elements of $\mathcal{A}$. 
Alternatively,
\begin{equation}
  \seminorm{\infty}{a} = \min \set[\big]{\lambda \in {[0,\infty]}}{{-\lambda} \Unit \le a \le \lambda \Unit}
  \label{eq:inftyseminormalt}
\end{equation}
holds for all $a\in \mathcal{A}_\Hermitian$. Then $\mathcal{A}^\bd$ is a unital $^*$\=/subalgebra of $\mathcal{A}$ on which the restriction
of $\seminorm{\infty}{\argument}$ is a \neu{$C^*$\=/norm}, i.e.~a norm
fulfilling $\seminorm{\infty}{ab} \le \seminorm{\infty}{a} \seminorm{\infty}{b}$ and $\seminorm{\infty}{a^*a} = \seminorm{\infty}{a}^2$
for all $a,b\in \mathcal{A}^\bd$. This relation between Archimedean ordered $^*$\=/algebras and $C^*$\=/algebras
has been described first in \cite{cimpric:representationTheoremForArchimedeanQuadraticModules},
and might serve as a motivation to study Archimedean ordered $^*$\=/algebras as abstractions
of $^*$\=/algebras of possibly unbounded operators on a Hilbert space, i.e.~of $O^*$\=/algebras as in \cite{schmuedgen:UnboundedOperatorAlgebraAndRepresentationTheory}.
In the unbounded case,
if $\mathcal{A}^\bd \neq \mathcal{A}$, the map $\seminorm{\infty}{\argument}$ does not describe a norm on $\mathcal{A}$
but can still be used to construct a translation-invariant
metric $\metric_\infty$ on $\mathcal{A}$, called the \neu{uniform metric}, as
\begin{equation}
  (a,b) \mapsto \metric_\infty(a,b) \coloneqq \min\big\{ \seminorm{\infty}{a-b}, 1 \big\}
\end{equation}
for all $a,b\in \mathcal{A}$. An Archimedean ordered $^*$\=/algebra is called \neu{uniformly complete} if it
is complete with respect to $\metric_\infty$. All topological and metric notions for
Archimedean ordered $^*$\=/algebras will always refer to this uniform metric. For example,
every positive unital $^*$\=/homomorphism between two Archimedean ordered $^*$\=/algebras is
automatically continuous.

If $\mathcal{A}$ is an ordered $^*$\=/algebra, then an element $a\in \mathcal{A}_\Hermitian$ is called \neu{coercive}
if there exists an $\epsilon \in {]0,\infty[}$ such that $\epsilon \Unit \le a$, 
thus coercive elements are especially positive. If the multiplicative inverse $a^{-1}$ of this coercive $a$ exists in $\mathcal{A}$, then 
it is Hermitian and $0 \le a^{-1} \le \epsilon^{-1} \Unit$.
An ordered $^*$\=/algebra in which all coercive elements are invertible
is called \neu{symmetric}. A \neu{\Sus-algebra} finally is a symmetric and uniformly complete Archimedean ordered $^*$\=/algebra.

Examples are all $C^*$\=/algebras with the canonical order (these are the \Sus\=/algebras $\mathcal{A}$ for which $\mathcal{A} = \mathcal{A}^\bd$)
as well as complexifications of complete $\Phi$\=/algebras (which are the commutative \Sus\=/algebras),
especially $\Stetig(X)$ for every topological space $X$.
See e.g. \cite{henriksen.johnson:structureOfArchimedeanLatticeOrderedAlgebras} for more details
on $\Phi$\=/algebras. One can check that $\seminorm{\infty}{f} = \sup_{x\in X} \abs{f(x)}$
holds for all $f\in \Stetig(X)$ and all topological spaces $X$, and it is worth mentioning
that the order on $\Stetig(X)_\Hermitian$ is usually easier to work with than the axioms \eqref{eq:orderedstaralg}.
Given a continuous map $\phi \colon X \to Y$ between two topological spaces $X$ and $Y$,
then one obtains a positive unital $^*$\=/homomorphism $\Stetig(Y) \ni f \mapsto f \circ \phi \in \Stetig(X)$.
In the special case that $X$ is a subset of $Y$ and $\phi$ the inclusion map, then $f\circ \phi$
is simply the restriction of $f\in\Stetig(Y)$ to $X$, denoted by $f\at{X} \in \Stetig(X)$.
More examples of \Sus\=/algebras can be constructed as $^*$\=/algebras of unbounded operators on a Hilbert space
if some selfadjointness conditions are fulfilled that guarantee the existence
of inverses of coercive elements. \Sus\=/algebras have some nice properties, for example, one can
construct square roots of positive Hermitian elements, or absolute values of Hermitian elements,
and also adapted versions of infima and suprema of finitely many commuting Hermitian elements.
However, for the purpose of this article, these constructions are only needed in the special case
of $\Stetig(X)$, where they simply describe the pointwise square root, pointwise absolute value, and pointwise 
minimum or maximum of real-valued functions. The universal continuous calculus then provides
an equivalent way to generalize these constructions to arbitrary \Sus\=/algebras.
It is also noteworthy that on a \Sus\=/algebra $\mathcal{A}$, in contrast to the case of ordered $^*$\=/algebras
of polynomials, the order is uniquely determined in the sense that there exists only one partial
order on the Hermitian elements of the underlying $^*$\=/algebra that fulfils the axioms \eqref{eq:orderedstaralg}.
\section{Universal Continuous Calculus -- Definitions} \label{sec:ccdef}
Consider a \Sus\=/algebra $\mathcal{A}$. Then every unital $^*$\=/subalgebra $\mathcal{B}$ of $\mathcal{A}$
with the order inherited from $\mathcal{A}$ is again an Archimedean ordered $^*$\=/algebra. It is
even a \Sus\=/algebra if and only if it is symmetric itself and closed with respect to the uniform
metric $\metric_\infty$. One obvious example for this is $\mathcal{B} = \mathcal{A}^\bd$, the $C^*$-algebra of all 
uniformly bounded elements of $\mathcal{A}$. However, there arise some difficulties
when one attempts to construct other examples of closed symmetric unital $^*$\=/subalgebras: The naive approach
to start with a symmetric unital $^*$\=/subalgebra and take its closure with respect to $\metric_\infty$
might fail because left and right multiplication with elements of $\mathcal{A}$ is in general not continuous with 
respect to $\metric_\infty$, which makes it harder to guarantee that the closure is again a subalgebra. 
Nevertheless, there are important special cases which are easier to understand: 
\begin{definition}
  Let $\mathcal{A}$ be an ordered $^*$\=/algebra. Then an \neu{intermediate $^*$\=/subalgebra of $\mathcal{A}$}
  is a (necessarily unital) $^*$\=/subalgebra $\mathcal{I}$ of $\mathcal{A}$ fulfilling 
  $\mathcal{A}^\bd \subseteq \mathcal{I}$.
\end{definition}
\begin{proposition} \label{proposition:intermediateSus}
  Let $\mathcal{A}$ be a \Sus\=/algebra and $\mathcal{I}$ an intermediate $^*$\=/subalgebra
  of $\mathcal{A}$, then $\mathcal{I}$ is a closed unital $^*$\=/subalgebra of $\mathcal{A}$ and symmetric,
  hence is  again a \Sus\=/algebra. Moreover, whenever
  $\ell,u \in \mathcal{I}_\Hermitian$ and $a \in \mathcal{A}_\Hermitian$ fulfil $\ell \le a \le u$,
  then also $a\in \mathcal{I}_\Hermitian$.
\end{proposition}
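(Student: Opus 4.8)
The plan is to exploit the defining property of a \Sus-algebra — that coercive Hermitian elements are invertible — together with the $C^*$-algebra structure on $\mathcal{A}^\bd$. First I would prove the last sentence of the statement (the ``sandwich'' property), since the symmetry and closedness of $\mathcal{I}$ will follow from it rather cheaply. So suppose $\ell, u \in \mathcal{I}_\Hermitian$ and $a \in \mathcal{A}_\Hermitian$ with $\ell \le a \le u$. Then $0 \le a - \ell \le u - \ell$, and $u - \ell \in \mathcal{I}_\Hermitian$. The idea is to write $a - \ell$ as a product of elements that visibly lie in $\mathcal{I}$. Set $b \coloneqq u - \ell + \Unit \in \mathcal{I}_\Hermitian$; this is coercive (it satisfies $\Unit \le b$), hence invertible in $\mathcal{A}$ by symmetry, with $0 \le b^{-1} \le \Unit$, so in fact $b^{-1} \in \mathcal{A}^\bd \subseteq \mathcal{I}$. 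Now $0 \le a - \ell \le b$, so $0 \le b^{-1/2}(a-\ell)b^{-1/2} \le \Unit$, where $b^{-1/2}$ is the square root of $b^{-1} \in \mathcal{A}^\bd$ taken inside the $C^*$-algebra $\mathcal{A}^\bd$ (so $b^{-1/2} \in \mathcal{A}^\bd \subseteq \mathcal{I}$). Therefore $c \coloneqq b^{-1/2}(a-\ell)b^{-1/2}$ is a Hermitian element of $\mathcal{A}$ with $0 \le c \le \Unit$, hence $c \in \mathcal{A}^\bd \subseteq \mathcal{I}$. Finally $a = \ell + b^{1/2} c\, b^{1/2}$, and since $b^{1/2} \in \mathcal{I}$ (again as an element of the $C^*$-algebra $\mathcal{A}^\bd$, because $b^{-1}$ is bounded and coercive — equivalently $b^{1/2}$ is the inverse of $b^{-1/2}$, bounded since $b^{-1/2}$ is coercive) and $c, \ell \in \mathcal{I}$, we get $a \in \mathcal{I}$. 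The one point to be careful about is that $b^{1/2}$ need not be uniformly bounded in general; but here $b^{1/2} = (b^{-1/2})^{-1}$ and $b^{-1/2}$ is coercive in $\mathcal{A}^\bd$, so its inverse is again in $\mathcal{A}^\bd$. Hence everything stays inside $\mathcal{I}$.

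Next I would deduce that $\mathcal{I}$ is closed in the uniform metric. Suppose $(a_k)_{k\in\NN}$ is a sequence in $\mathcal{I}$ converging to some $a \in \mathcal{A}$; passing to real and imaginary parts, we may assume all $a_k$ and $a$ are Hermitian (note $\RE, \IM$ are continuous and preserve $\mathcal{I}_\Hermitian$). Pick $k$ large enough that $\seminorm{\infty}{a - a_k} \le 1$, so that $-\Unit \le a - a_k \le \Unit$ by \eqref{eq:inftyseminormalt}, i.e. $a_k - \Unit \le a \le a_k + \Unit$. Since $a_k \pm \Unit \in \mathcal{I}_\Hermitian$, the sandwich property just proved gives $a \in \mathcal{I}_\Hermitian \subseteq \mathcal{I}$. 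Hence $\mathcal{I}$ is closed.

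It remains to check that $\mathcal{I}$ is symmetric, i.e. that every coercive element of $\mathcal{I}$ is invertible \emph{in} $\mathcal{I}$. If $a \in \mathcal{I}_\Hermitian$ is coercive, say $\epsilon \Unit \le a$ with $\epsilon > 0$, then $a$ is invertible in $\mathcal{A}$ because $\mathcal{A}$ is symmetric, and $0 \le a^{-1} \le \epsilon^{-1}\Unit$, so $a^{-1} \in \mathcal{A}^\bd \subseteq \mathcal{I}$. Thus $a$ is invertible inside $\mathcal{I}$, and $\mathcal{I}$ is symmetric. Being a closed unital $^*$-subalgebra of the uniformly complete $\mathcal{A}$, it is itself uniformly complete (a closed subspace of a complete metric space), and it inherits the Archimedean property from $\mathcal{A}$ (if $x \le \varepsilon y$ in $\mathcal{I}_\Hermitian$ for all $\varepsilon > 0$ with $y \ge 0$, the same holds in $\mathcal{A}_\Hermitian$, forcing $x \le 0$). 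Hence $\mathcal{I}$ is a \Sus-algebra.

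The main obstacle is the first part: one must produce the factorization $a = \ell + b^{1/2} c\, b^{1/2}$ with every factor manifestly in $\mathcal{I}$, and for that the crucial realization is that although $\mathcal{I}$ need not be closed under arbitrary square roots or multiplications, the specific elements $b^{-1/2}$ and $b^{1/2}$ and the sandwiched element $c$ all land in the $C^*$-algebra $\mathcal{A}^\bd$, which is contained in $\mathcal{I}$ by hypothesis. Everything else — closedness and symmetry — then follows formally from this sandwich property together with the characterization \eqref{eq:inftyseminormalt} of $\seminorm{\infty}{\argument}$ and the symmetry of $\mathcal{A}$.
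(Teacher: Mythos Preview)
Your overall strategy is sound and matches the paper's: establish the sandwich property first, then deduce closedness and symmetry. The arguments for closedness and symmetry are correct and essentially identical to the paper's. However, there is a genuine gap in your proof of the sandwich property.

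You claim that $b^{-1/2}$ is coercive, so that $b^{1/2} = (b^{-1/2})^{-1} \in \mathcal{A}^\bd \subseteq \mathcal{I}$. This is false in general. Coercivity of $b^{-1/2}$ would mean $b^{-1/2} \ge \epsilon\Unit$ for some $\epsilon > 0$, hence $b^{-1} = (b^{-1/2})^2 \ge \epsilon^2\Unit$, which forces $b \le \epsilon^{-2}\Unit$, i.e.\ $b \in \mathcal{A}^\bd$. But $b = u - \ell + \Unit$ with $u,\ell$ arbitrary Hermitian elements of $\mathcal{I}$, so $b$ is typically unbounded (take $\mathcal{A} = \Stetig(\RR)$, $\mathcal{I} = \mathcal{A}$, $\ell = 0$, $u = \pr_1^2$). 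Thus $b^{1/2}$ need not lie in $\mathcal{A}^\bd$, and since you have not yet shown that $\mathcal{I}$ is closed under square roots of unbounded positive elements, you cannot place $b^{1/2}$ in $\mathcal{I}$; the factorization $a = \ell + b^{1/2} c\, b^{1/2}$ therefore does not yield $a \in \mathcal{I}$.

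The fix is to avoid square roots entirely and conjugate by $b^{-1}$ instead of $b^{-1/2}$. With your $b = u - \ell + \Unit \in \mathcal{I}$ one has $-b \le 0 \le a - \ell \le b$, hence by the order axiom $-b^{-1} = -b^{-1} b\, b^{-1} \le b^{-1}(a-\ell)b^{-1} \le b^{-1} b\, b^{-1} = b^{-1} \le \Unit$, so $c' \coloneqq b^{-1}(a-\ell)b^{-1} \in \mathcal{A}^\bd \subseteq \mathcal{I}$. Then $a = \ell + b\,c'\,b$ with $b \in \mathcal{I}$ (no square root needed), and you are done. This is exactly the mechanism the paper uses, though the paper builds a slightly different $b$ to sandwich $a$ itself rather than $a - \ell$.
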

\begin{proof}
  As the inverse of every coercive element in $\mathcal{I}_\Hermitian$ is uniformly bounded, 
  $\mathcal{I}$ is again symmetric. 
  Moreover, if $a\in \mathcal{A}_\Hermitian$ and $\ell,u \in \mathcal{I}_\Hermitian$
  fulfil $\ell \le a \le u$, define 
  $b \coloneqq \frac{1}{2}(\Unit - \ell)^2 + \frac{1}{2}(\Unit + u)^2 + \Unit \in \mathcal{I}_\Hermitian^+$.
  Then $-b \le -\frac{1}{2}(\Unit - \ell)^2 \le \ell$ and $u \le \frac{1}{2}(\Unit + u)^2 \le b$
  imply $-b \le a \le b$. As $b$ is coercive with $b\ge \Unit$ it follows that 
  $b^{-1} \in (\mathcal{A}^\bd)_\Hermitian^+$ exists and that
  $-b^{-1} \le b^{-1} a \,b^{-1} \le b^{-1}$.
  So $b^{-1} a \,b^{-1} \in (\mathcal{A}^\bd)_\Hermitian \subseteq \mathcal{I}_\Hermitian$ 
  and therefore also $a \in \mathcal{I}_\Hermitian$.
  
  It only remains to show that $\mathcal{I}$ is closed in $\mathcal{A}$, and as the $\RR$-linear projections
  $\mathcal{A} \ni a \mapsto \RE(a) = (a+a^*)/2 \in \mathcal{A}_\Hermitian$ and $\mathcal{A} \ni a \mapsto \IM(a) = (a-a^*)/(2\I) \in \mathcal{A}_\Hermitian$
  onto the real and imaginary parts are continuous by continuity of addition, $^*$\=/involution and scalar multiplication,
  it suffices to check that $\mathcal{I}_\Hermitian$ is closed in $\mathcal{A}_\Hermitian$: If a sequence  $(a_n)_{n\in \NN}$ 
  in $\mathcal{I}_\Hermitian$ converges against a limit $\hat{a} \in \mathcal{A}_\Hermitian$, then there exists an $n\in \NN$
  such that $-\Unit \le \hat{a}-a_n \le \Unit$, i.e. $a_n-\Unit \le \hat{a} \le a_n + \Unit$,
  which implies $\hat{a} \in \mathcal{I}_\Hermitian$.
\end{proof}

A continuous calculus should especially generalize the polynomial calculus. The continuous functions on $\RR^N$
that correspond to the generators $t_1,\dots,t_N$ of the $^*$\=/algebra $\CC[t_1,\dots,t_N]$ are the coordinate
functions:
\begin{definition}
  If $X$ is a closed subset of $\RR^N$ with $N\in \NN$, then the \neu{coordinate functions} on $X$
  are defined as $\pr_{X;n} \colon X \to \RR$, $(x_1,\dots,x_N) \mapsto \pr_{X;n}(x_1,\dots,x_N) \coloneqq x_n$
  for all $n\in \{1,\dots,N\}$. Oftentimes the set $X$ will be clear from the context,
  and then one simply writes $\pr_n$ instead of $\pr_{X;n}$.
\end{definition}
It is clear that these coordinate functions are continuous. We can now define continuous calculi:
\begin{definition} \label{definition:calculi}
  Let $\mathcal{A}$ be an ordered $^*$\=/algebra, $N\in \NN$ and $a_1,\dots,a_N\in \mathcal{A}_\Hermitian$,
  then a \neu{continuous calculus for $a_1,\dots,a_N$} is a triple $(X,\mathcal{I},\Phi)$ 
  of a closed subset $X$ of $\RR^N$, an intermediate $^*$\=/subalgebra $\mathcal{I}$
  of $\Stetig(X)$ with $\pr_{n} \in \mathcal{I}$ for all $n\in \{1,\dots,N\}$, and 
  a positive unital $^*$\=/homomorphism $\Phi \colon \mathcal{I} \to \mathcal{A}$ fulfilling $\Phi(\pr_{n}) = a_n$
  for all $n\in \{1,\dots,N\}$.
\end{definition}
Even though it is not explicitly required in the definition, such a continuous calculus for
an $N$-tuple of Hermitian elements $a_1,\dots,a_N$ can only exist if $a_1,\dots,a_N$ are
pairwise commuting, because the coordinate functions $\pr_{1},\dots,\pr_{N}$ are pairwise commuting.

Neither existence nor uniqueness of continuous calculi for fixed algebra elements $a_1,\dots,a_N$
are clear. It is especially possible that there are many different continuous calculi on different domains.
One would like to have a most general one:
\begin{definition} \label{definition:univcalculi}
  Let $\mathcal{A}$ be an ordered $^*$\=/algebra, $N\in\NN$ and $a_1,\dots,a_N\in \mathcal{A}_\Hermitian$,
  then a continuous calculus $(X,\mathcal{I},\Phi)$ for $a_1,\dots,a_N$ is called \neu{universal}
  if every continuous calculus $(Y,\mathcal{J},\Psi)$ for $a_1,\dots,a_N$ factors through 
  $(X,\mathcal{I},\Phi)$ in the following sense: $X \subseteq Y$ and for every $f\in \mathcal{J}$, its restriction 
  $f\at{X}$ to a function on $X$ is an element of $\mathcal{I}$ and $\Phi(f\at{X}) = \Psi(f)$ holds.
\end{definition}
It is immediately clear that the universal continuous calculus is uniquely determined (if it exists):
\begin{proposition}
  Let $\mathcal{A}$ be an ordered $^*$\=/algebra, $N\in \NN$ and $a_1,\dots,a_N\in \mathcal{A}_\Hermitian$,
  and let $(X,\mathcal{I},\Phi)$ and $(Y,\mathcal{J},\Psi)$ be two universal continuous calculi 
  for $a_1,\dots,a_N$, then $X=Y$, $\mathcal{I}=\mathcal{J}$ and $\Phi = \Psi$.
\end{proposition}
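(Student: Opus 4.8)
The plan is to exploit the factoring property symmetrically. Since $(X,\mathcal{I},\Phi)$ is universal and $(Y,\mathcal{J},\Psi)$ is in particular a continuous calculus for $a_1,\dots,a_N$, the definition gives $X\subseteq Y$, and conversely, applying universality of $(Y,\mathcal{J},\Psi)$ to the continuous calculus $(X,\mathcal{I},\Phi)$ gives $Y\subseteq X$. Hence $X=Y$.

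Next I would pin down the equality of the function algebras. From universality of $(X,\mathcal{I},\Phi)$ applied to $(Y,\mathcal{J},\Psi)$: for every $f\in\mathcal{J}$ the restriction $f\at{X}\in\mathcal{I}$, but since $X=Y$ this restriction is just $f$ itself, so $\mathcal{J}\subseteq\mathcal{I}$. Swapping the roles of the two calculi (using universality of $(Y,\mathcal{J},\Psi)$ applied to $(X,\mathcal{I},\Phi)$) gives $\mathcal{I}\subseteq\mathcal{J}$, hence $\mathcal{I}=\mathcal{J}$.

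Finally, equality of the homomorphisms follows from the same bookkeeping: universality of $(X,\mathcal{I},\Phi)$ applied to $(Y,\mathcal{J},\Psi)$ yields $\Phi(f\at{X})=\Psi(f)$ for all $f\in\mathcal{J}$; since $X=Y$ and $\mathcal{I}=\mathcal{J}$, this reads $\Phi(f)=\Psi(f)$ for all $f\in\mathcal{I}=\mathcal{J}$, i.e. $\Phi=\Psi$. (One could equally read this off directly from the restriction equation before identifying the algebras, and the argument is entirely symmetric, so no separate check is needed.)

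I do not expect any real obstacle here: the statement is a formal consequence of the universal property, and the only mild subtlety is the careful use of $X=Y$ to see that the restriction maps $f\mapsto f\at{X}$ appearing in Definition~\ref{definition:univcalculi} are identities on the relevant algebras, so that the inclusions $\mathcal{I}\subseteq\mathcal{J}$ and $\mathcal{J}\subseteq\mathcal{I}$ (and the corresponding identities of the $^*$\=/homomorphisms) genuinely follow. The proof is short enough that it needs no lemmas beyond the definitions already given.
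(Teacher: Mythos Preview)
Your argument is correct and is precisely the routine symmetry argument the paper has in mind; the paper in fact gives no proof at all, stating only that uniqueness is ``immediately clear'' from Definition~\ref{definition:univcalculi}. There is nothing to add or change.
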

It thus makes sense to define:
\begin{definition}
  Let $\mathcal{A}$ be an ordered $^*$\=/algebra, $N\in \NN$ and $a_1,\dots,a_N\in \mathcal{A}_\Hermitian$
  such that the universal continuous calculus $(X,\mathcal{I},\Phi)$ exists.
  Then $X$ is called the \neu{spectrum of $a_1,\dots,a_N$} and is denoted by
  $\spec(a_1,\dots,a_N) \coloneqq X \subseteq \RR^N$. Similarly, define
  $\mathcal{F}(a_1,\dots,a_N) \coloneqq \mathcal{I} \subseteq \Stetig\big( \spec(a_1,\dots,a_N) \big)$
  and $\Gamma_{a_1,\dots,a_N} \coloneqq \Phi \colon \mathcal{F}(a_1,\dots,a_N) \to \mathcal{A}$.
\end{definition}

If $(X,\mathcal{I},\Phi)$ is a continuous calculus for a tuple $a_1,\dots,a_N$, $N\in \NN$, of Hermitian elements
of an ordered $^*$\=/algebra $\mathcal{A}$, then one can in general not expect $\mathcal{I}$ to be finitely generated as a $^*$\=/algebra.
However, it will become clear from the construction of continuous calculi that $\mathcal{I}$ still fulfils a
weaker finiteness condition that will be discussed in the remainder of this section:
\begin{definition} \label{definition:properelement}
  Let $\mathcal{A}$ be an ordered $^*$\=/algebra and $p\in \mathcal{A}_\Hermitian^+$, then $p$ is said to be \neu{proper in $\mathcal{A}$}
  if for every $\lambda \in {]0,\infty[}$ and every $a\in \mathcal{A}^+_\Hermitian$ there exist $\mu \in {[0,\infty[}$ and $b\in \mathcal{A}^+_\Hermitian$
  such that for every $\epsilon \in {]0,\infty[}$ one finds $k\in \NN_0$ for which the estimate $a \le \mu \Unit + (p/\lambda)^k + \epsilon b$
  holds.
\end{definition}
For example, if $\mathcal{A}$ is an Archimedean ordered $^*$\=/algebra in which every element is uniformly bounded, i.e.~$\mathcal{A}^\bd = \mathcal{A}$,
then every $p \in \mathcal{A}^+_\Hermitian$ is proper in $\mathcal{A}$ because $a \le \seminorm{\infty}{a} \Unit$ for all $a\in \mathcal{A}^+_\Hermitian$.
Less trivial examples will be discussed in the next Section~\ref{sec:propersus}.
\begin{definition}
  Let $\mathcal{A}$ be a \Sus\=/algebra and $S\subseteq \mathcal{A}^\bd$, then $\genCstar{S}$ denotes the \neu{$C^*$\=/subalgebra
  of $\mathcal{A}^\bd$ that is generated by $S$}, i.e.~the closure in $\mathcal{A}^\bd$ with respect to the $C^*$\=/norm $\seminorm{\infty}{\argument}$
  of the unital $^*$\=/subalgebra that is generated by $S$.
\end{definition}
As the restriction of $\seminorm{\infty}{\argument}$ to the uniformly bounded elements is a $C^*$\=/norm,
$\genCstar{S}$ is indeed a $C^*$\=/algebra.
\begin{lemma}
  Let $\mathcal{A}$ be a commutative \Sus\=/algebra, $N\in \NN$ and $a_1,\dots,a_N \in \mathcal{A}_\Hermitian$.
  Then one has $a_n (\Unit+a_1^2 + \dots + a_N^2)^{-1} \in \mathcal{A}^\bd$ for all $n\in \{1,\dots,N\}$.
\end{lemma}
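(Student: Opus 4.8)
The plan is to deduce the statement directly from the order axioms \eqref{eq:orderedstaralg} together with the explicit description \eqref{eq:inftyseminorm} of the $C^*$-seminorm. Write $c \coloneqq \Unit + a_1^2 + \dots + a_N^2$. Then $c \in \mathcal{A}_\Hermitian$ and $\Unit \le c$, so $c$ is coercive; since a \Sus\=/algebra is in particular symmetric, the inverse $c^{-1}$ exists in $\mathcal{A}$, is Hermitian, and satisfies $0 \le c^{-1} \le \Unit$ (so, incidentally, $c^{-1} \in \mathcal{A}^\bd$). Thus $a_n c^{-1}$ is a well-defined element of $\mathcal{A}$, and it is exactly its uniform boundedness that we have to establish.

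Next I would fix $n \in \{1,\dots,N\}$ and record the estimate $a_n^2 \le c$, which holds because $c - a_n^2 = \Unit + \sum_{m \neq n} a_m^2$ is a sum of $\Unit \ge 0$ and of elements $a_m^2 = a_m^* a_m \in \mathcal{A}_\Hermitian^+$. Applying the axiom $d^* a\, d \le d^* b\, d$ of \eqref{eq:orderedstaralg} with $d = c^{-1}$, which is legitimate since $c^{-1}$ is Hermitian, then yields
\[
  c^{-1} a_n^2 c^{-1} \le c^{-1} c\, c^{-1} = c^{-1} \le \Unit .
\]

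Finally, using only that $a_n$ and $c^{-1}$ are Hermitian, one computes $(a_n c^{-1})^* (a_n c^{-1}) = c^{-1} a_n^2 c^{-1}$, so the inequality just obtained gives $(a_n c^{-1})^* (a_n c^{-1}) \le 1^2\,\Unit$. By the formula \eqref{eq:inftyseminorm} this means $\seminorm{\infty}{a_n c^{-1}} \le 1 < \infty$, i.e.~$a_n c^{-1} \in \mathcal{A}^\bd$, which is the claim.

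I do not expect any genuine obstacle here: the argument is a short chain of applications of the ordered-$^*$-algebra axioms, and the only points that need a little attention are that $c^{-1}$ is Hermitian, so that $a_n c^{-1}$ interacts correctly with the $^*$-involution, and that the order axiom is applied in the correct direction. It is worth noting that commutativity of $\mathcal{A}$, although part of the ambient hypotheses, is not actually used: the same computation proves the statement in any \Sus\=/algebra.
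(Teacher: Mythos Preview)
Your proof is correct and follows essentially the same route as the paper: from $a_n^2 \le c$ one conjugates by $c^{-1}$ to obtain $(a_n c^{-1})^*(a_n c^{-1}) = c^{-1}a_n^2 c^{-1} \le c^{-1} \le \Unit$, hence $\seminorm{\infty}{a_n c^{-1}} \le 1$. Your write-up is in fact slightly cleaner than the paper's, which uses commutativity to rewrite $c^{-1}a_n^2 c^{-1}$ as $a_n^2 c^{-2}$; as you correctly observe, the argument goes through verbatim in any \Sus\=/algebra without assuming commutativity.
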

\begin{proof}
  From $\Unit+a_1^2 + \dots + a_N^2 \ge \Unit$ it follows that $0 \le (\Unit+a_1^2 + \dots + a_N^2)^{-1} \le \Unit$,
  and therefore
  \begin{align*}
    a_n^2 (\Unit+a_1^2 + \dots + a_N^2)^{-2}
    \le
    (\Unit+a_1^2 + \dots + a_N^2)  (\Unit+a_1^2 + \dots + a_N^2)^{-2}
    =
    (\Unit+a_1^2 + \dots + a_N^2)^{-1}
  \end{align*}
  shows that $a_n (\Unit+a_1^2 + \dots + a_N^2)^{-1} \in \mathcal{A}^\bd$ for all $n\in \{1,\dots,N\}$.
\end{proof}
Because of this, the next definition makes sense:
\begin{definition}
  A commutative \Sus\=/algebra $\mathcal{A}$ is said to be \neu{of finite type} if there are $N\in \NN$ and
  $a_1,\dots,a_N \in \mathcal{A}_\Hermitian$ for which $p \coloneqq \Unit + a_1^2 + \dots + a_N^2$
  is proper in $\mathcal{A}$ and which generate $\mathcal{A}$ in the sense that for every $b \in \mathcal{A}$ one finds 
  numerator and denominator $b_\mathrm{n},b_\mathrm{d} \in \genCstar{\{p^{-1}, a_1 p^{-1}, \dots, a_N p^{-1}\}}$
  such that $b_\mathrm{d}$ is invertible in $\mathcal{A}$ and $b = b_\mathrm{n} b_\mathrm{d}^{-1}$.
  In this case $a_1,\dots,a_N$ are called \neu{generators} of $\mathcal{A}$.
\end{definition}
It should not come as a surprise that the surjective image of a commutative \Sus\=/algebra of finite type is again of finite type:
\begin{proposition} \label{proposition:surjectivefinite}
  Let $\mathcal{A}$ and $\mathcal{B}$ be two commutative \Sus\=/algebras and $\Phi \colon \mathcal{A} \to \mathcal{B}$
  a surjective positive unital $^*$\=/homomorphism. If $\mathcal{A}$ is of finite type with generators
  $a_1,\dots,a_N\in\mathcal{A}_\Hermitian$, $N\in \NN$, then $\mathcal{B}$ is also of finite type with
  generators $\Phi(a_1),\dots,\Phi(a_N)$.
\end{proposition}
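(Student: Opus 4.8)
The plan is to transport the two defining properties of "finite type" — properness of $p \coloneqq \Unit + a_1^2 + \dots + a_N^2$ and the generation condition — across the surjection $\Phi$, using the candidate $q \coloneqq \Unit + \Phi(a_1)^2 + \dots + \Phi(a_N)^2 = \Phi(p)$. Since $\Phi$ is a positive unital $^*$\=/homomorphism, it is automatically continuous and order-preserving, so it maps estimates to estimates and, crucially, restricts to a unital $^*$\=/homomorphism $\mathcal{A}^\bd \to \mathcal{B}^\bd$ which is norm-decreasing for $\seminorm{\infty}{\argument}$ (from $x^*x \le \seminorm{\infty}{x}^2\Unit$ we get $\Phi(x)^*\Phi(x) \le \seminorm{\infty}{x}^2\Unit$). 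Hence $\Phi$ maps the unital $^*$\=/subalgebra of $\mathcal{A}^\bd$ generated by $\{p^{-1}, a_1 p^{-1}, \dots\}$ into the corresponding one for $\{q^{-1}, \Phi(a_1) q^{-1}, \dots\}$ — here one uses that $\Phi(p^{-1}) = q^{-1}$ because $\Phi(p)\Phi(p^{-1}) = \Phi(\Unit) = \Unit_{\mathcal B}$ and inverses are unique — and by continuity it maps $\genCstar{\{p^{-1}, a_1 p^{-1}, \dots, a_N p^{-1}\}}$ into $\genCstar{\{q^{-1}, \Phi(a_1)q^{-1}, \dots, \Phi(a_N)q^{-1}\}}$.

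First I would verify properness of $q$ in $\mathcal{B}$. Fix $\lambda \in {]0,\infty[}$ and $b' \in \mathcal{B}^+_\Hermitian$; by surjectivity pick $a \in \mathcal{A}_\Hermitian$ with $\Phi(a) = b'$, and replace $a$ by $a^2$ or otherwise arrange a positive preimage — concretely, $\Phi(a^*a) = b'^*b' = b'^2$ need not equal $b'$, so instead I would use that $\Phi$ surjective plus $\mathcal{B}$ a \Sus\=/algebra lets me lift: actually the clean route is to note $b' \le \seminorm{\infty}{?}$ fails since $b'$ may be unbounded, so I genuinely need a positive preimage. This is handled by choosing any preimage $c$ of $b'$ and setting $a \coloneqq \frac12(c + c^*)$ — no, still not positive. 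The correct fix: pick a preimage $c$ of some $b''$ with $b''^2 \ge b'$ is circular; simplest is to invoke that $\Phi$ restricted to $\mathcal{A}_\Hermitian \to \mathcal{B}_\Hermitian$ is surjective (real and imaginary parts), pick $a$ with $\Phi(a) = b'$, and use properness of $p$ applied to the positive element $a^2 \in \mathcal{A}^+_\Hermitian$: this yields $\mu, b$ with $a^2 \le \mu\Unit + (p/\lambda)^k + \epsilon b$ for suitable $k$; apply $\Phi$ to get $b'^2 \le \mu\Unit + (q/\lambda)^k + \epsilon \Phi(b)$. Then $b' \le \Unit + b'^2 \le (\mu+1)\Unit + (q/\lambda)^k + \epsilon\Phi(b)$, which is exactly the required estimate with $\mu' \coloneqq \mu+1$ and $b'' \coloneqq \Phi(b)$. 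So $q$ is proper in $\mathcal{B}$.

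Next I would check the generation condition. Given $b \in \mathcal{B}$, lift to $a \in \mathcal{A}$ with $\Phi(a) = b$; by finite type of $\mathcal{A}$ write $a = a_\mathrm{n} a_\mathrm{d}^{-1}$ with $a_\mathrm{n}, a_\mathrm{d} \in \genCstar{\{p^{-1}, a_1 p^{-1}, \dots\}}$ and $a_\mathrm{d}$ invertible in $\mathcal{A}$. Set $b_\mathrm{n} \coloneqq \Phi(a_\mathrm{n})$ and $b_\mathrm{d} \coloneqq \Phi(a_\mathrm{d})$; by the discussion above both lie in $\genCstar{\{q^{-1}, \Phi(a_1)q^{-1}, \dots, \Phi(a_N)q^{-1}\}}$, and $b_\mathrm{d} = \Phi(a_\mathrm{d})$ is invertible in $\mathcal{B}$ with $b_\mathrm{d}^{-1} = \Phi(a_\mathrm{d}^{-1})$ since $\Phi$ is a unital homomorphism. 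Then $b = \Phi(a_\mathrm{n} a_\mathrm{d}^{-1}) = b_\mathrm{n} b_\mathrm{d}^{-1}$, as needed. Together with properness of $q$, this shows $\mathcal{B}$ is of finite type with generators $\Phi(a_1), \dots, \Phi(a_N)$.

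The main obstacle is the bookkeeping around lifting positive elements and around the identity $\Phi(p^{-1}) = q^{-1}$: one must be careful that "surjective" for $\Phi$ gives surjectivity on Hermitian elements (immediate from real/imaginary parts) but not automatically on positive elements, which is why the argument for properness routes through $a^2$ rather than a positive preimage of $b'$, and why it tacks on $b' \le \Unit + b'^2$. Everything else — continuity of $\Phi$, norm-decrease on bounded parts, compatibility with $\genCstar{\argument}$ and with inverses — is routine given the results already in the excerpt (in particular that positive unital $^*$\=/homomorphisms between Archimedean ordered $^*$\=/algebras are continuous).
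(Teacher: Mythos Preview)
Your proposal is correct and follows essentially the same approach as the paper: both arguments lift a Hermitian (not necessarily positive) preimage $a$ of $b' \in \mathcal{B}^+_\Hermitian$, apply properness of $p$ to a positive element built from $a$ (you use $a^2$, the paper uses $\tfrac{1}{2}(\Unit + a^2)$), push the resulting estimate through $\Phi$, and then invoke the square inequality $b' \le \tfrac{1}{2}(\Unit + b'^2)$ (or your weaker $b' \le \Unit + b'^2$) to recover the bound for $b'$ itself; the generation part via $\Phi(a_\mathrm{n}), \Phi(a_\mathrm{d})$ and the continuity/norm-decrease of $\Phi$ on bounded elements is identical to the paper's.
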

\begin{proof}
  Write $p \coloneqq \Unit_\mathcal{A} + a_1^2 + \dots + a_N^2$, then $\Phi(p) = \Unit_B + \Phi(a_1)^2 + \dots + \Phi(a_N)^2$ is proper:
  Indeed, for every $\lambda \in {]0,\infty[}$ and every element of $\mathcal{B}^+_\Hermitian$, 
  which can be expressed as $\Phi(a)$ with $a \in \mathcal{A}_\Hermitian$ by taking the real part
  of any preimage under $\Phi$, there exist $\mu \in {[0,\infty[}$ and $\tilde{a} \in \mathcal{A}^+_\Hermitian$
  such that for every $\epsilon \in {]0,\infty[}$ one finds $k\in \NN_0$ for which the estimate
  $\frac{1}{2}(\Unit_\mathcal{A} + a^2) \le \mu \Unit_\mathcal{A} + (p / \lambda)^k + \epsilon \tilde{a}$ holds because $p$ is proper in $\mathcal{A}$.
  As a consequence, $\Phi(a) \le \frac{1}{2}(\Unit_\mathcal{B} + \Phi(a)^2) = \Phi\big( \frac{1}{2}(\Unit_\mathcal{A} + a^2) \big) \le \mu \Unit_\mathcal{B} + (\Phi(p) / \lambda)^k + \epsilon \Phi(\tilde{a})$ holds with $\Phi(\tilde{a}) \in \mathcal{B}^+_\Hermitian$.
  
  Now given $b\in \mathcal{B}$, then there is some $\tilde{a} \in \mathcal{A}$ with $\Phi(\tilde{a}) = b$.
  As $\mathcal{A}$ is of finite type, there are $\tilde{a}_{\mathrm{n}}, \tilde{a}_{\mathrm{d}} \in \genCstar{\{p^{-1}, a_1p^{-1}, \dots, a_N p^{-1}\}}$
  such that $\tilde{a}_{\mathrm{d}}$ is invertible in $\mathcal{A}$ and $\tilde{a} = \tilde{a}_\mathrm{n}\tilde{a}_\mathrm{d}^{-1}$.
  Using that $\seminorm{\infty}{\Phi(a)} \le \seminorm{\infty}{a}$ for all $a\in \mathcal{A}^\bd$,
  one sees that the image of $\genCstar{\{p^{-1}, a_1p^{-1}, \dots, a_N p^{-1}\}}$ under $\Phi$
  is contained in the $C^*$\=/subalgebra $\genCstar{\{\Phi(p)^{-1}, \Phi(a_1) \Phi(p)^{-1}, \dots, \Phi(a_N) \Phi(p)^{-1}\}}$
  of $\mathcal{B}^\bd$. This especially applies to $\Phi(\tilde{a}_\mathrm{n})$ and $\Phi(\tilde{a}_\mathrm{d})$, 
  and $\Phi(\tilde{a}_\mathrm{d})$ is invertible in $\mathcal{B}$ with inverse 
  $\Phi(\tilde{a}_\mathrm{d})^{-1} = \Phi(\tilde{a}_\mathrm{d}^{-1})$,
  and $b = \Phi(\tilde{a}) = \Phi(\tilde{a}_\mathrm{n}) \Phi(\tilde{a}_\mathrm{d})^{-1}$ holds.
\end{proof}
\section{Proper \texorpdfstring{\Sus\=/Algebras}{Su*-Algebras} of Continuous Functions}
\label{sec:propersus}
Let $X$ be a (possibly empty) topological space.
In this section, certain intermediate $^*$\=/subalgebras of \Sus\=/algebras of
$\Stetig(X)$ are examined. 
Recall that a function $p \in \Stetig(X)$ is usually said to be a \neu{proper function} if
the preimage of every compact subset of $\CC$ under $p$ is again compact. If
$p \in \Stetig(X)^+_\Hermitian$, this is equivalent to $p^{-1}\big( [0,n] \big)$
being compact for all $n\in \NN$. In this case let 
$U_n \coloneqq p^{-1}\big( {]{-\infty},n[} \big) = p^{-1}\big( {[0,n[}  \big)$
and $K_n \coloneqq p^{-1}\big( [0,n] \big)$, then $U_n \subseteq K_n \subseteq U_{n+1} \subseteq K_{n+1}$
holds for all $n\in \NN$ and $X = \bigcup_{n\in \NN} U_n = \bigcup_{n\in \NN} K_n$.
As $U_n$ is open and $K_n$ compact for all $n\in \NN$, it follows that $X$ is locally
compact and admits an exhaustion by compact sets, i.e. the sequence $(K_n)_{n\in \NN}$
covers $X$ and is strictly increasing in the sense that $K_n$ is contained in the
interior of $K_{n+1}$ for all $n\in \NN$.
\begin{definition}
  Let $X$ be a locally compact Hausdorff space, then a 
  \neu{proper \Sus\=/algebra of continuous functions on $X$} is an intermediate $^*$\=/subalgebra
  $\mathcal{I}$ of $\Stetig(X)$ with the property that there exists a proper
  function $p\in \mathcal{I}^+_\Hermitian$.
\end{definition}
By Proposition~\ref{proposition:intermediateSus}, such an intermediate $^*$\=/subalgebra
of $\Stetig(X)$ indeed is a \Sus\=/algebra. 
If $X$ is a compact Hausdorff space, then there exists exactly one 
proper \Sus\=/algebra of continuous functions on $X$, namely the commutative $C^*$-algebra $\Stetig(X)$ itself.
Another example is, for every $N\in \NN$, the $^*$\=/algebra of all continuous functions on $\RR^N$ that are bounded by a polynomial function.
More generally, it is easy to check that continuous calculi are defined on proper \Sus\=/algebras of continuous functions:
\begin{proposition}
  Let $(X,\mathcal{I},\Phi)$ be a continuous calculus for some $N$-tuple
  of Hermitian elements of some ordered $^*$\=/algebra, then 
  $p \coloneqq \Unit + \pr_1^2 + \dots + \pr_N^2\in\mathcal{I}^+_\Hermitian$
  is a proper function and $\mathcal{I}$
  is a proper \Sus\=/algebra of continuous functions on $X$.
\end{proposition}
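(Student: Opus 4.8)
The plan is to unwind the definitions and invoke the Heine-Borel theorem. First I would check that $p$ is an element of $\mathcal{I}^+_\Hermitian$: by the definition of a continuous calculus each coordinate function $\pr_n$ lies in $\mathcal{I}$, and since $\mathcal{I}$ is a unital $^*$\=/subalgebra of $\Stetig(X)$, the finite sum $p = \Unit + \pr_1^2 + \dots + \pr_N^2$ lies in $\mathcal{I}$ as well. Viewed as a function on $X$ it is real\=/valued with $p(x) = 1 + x_1^2 + \dots + x_N^2 \ge 1$ for every $x = (x_1,\dots,x_N) \in X$, hence positive for the pointwise order on $\Stetig(X)$; thus $p\in\mathcal{I}^+_\Hermitian$.

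Next I would verify that $p$ is a proper function. Since $p \ge 0$, it suffices --- as noted just before the definition of a proper \Sus\=/algebra of continuous functions --- to show that $p^{-1}\big([0,n]\big)$ is compact for every $n\in\NN$. But $p^{-1}\big([0,n]\big) = \set{x\in X}{x_1^2 + \dots + x_N^2 \le n-1}$ is the intersection of $X$ with a closed ball about the origin in $\RR^N$; as $X$ is a closed subset of $\RR^N$ by the definition of a continuous calculus, this is a closed subset of a compact set and hence compact.

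Finally I would observe that $X$, being a closed subspace of the locally compact Hausdorff space $\RR^N$, is itself locally compact and Hausdorff, so the notion of a proper \Sus\=/algebra of continuous functions on $X$ applies. Since $\mathcal{I}$ is an intermediate $^*$\=/subalgebra of $\Stetig(X)$ containing the proper function $p \in \mathcal{I}^+_\Hermitian$ exhibited above, it is by definition a proper \Sus\=/algebra of continuous functions on $X$; by Proposition~\ref{proposition:intermediateSus} it is in particular again a \Sus\=/algebra.

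I do not expect any genuine obstacle here: the statement follows directly from the definitions together with Heine-Borel. The only point deserving a moment's care is that the preimages $p^{-1}\big([0,n]\big)$ are taken inside $X$ rather than inside all of $\RR^N$, which causes no trouble precisely because $X$ is closed.
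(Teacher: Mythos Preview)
Your argument is correct and is exactly the routine verification the paper has in mind: it states the proposition without proof, prefacing it only with ``it is easy to check that continuous calculi are defined on proper \Sus\=/algebras of continuous functions''. Unwinding the definition of a continuous calculus (so that $X\subseteq\RR^N$ is closed and each $\pr_n\in\mathcal{I}$) and applying Heine--Borel to the preimages $p^{-1}\big([0,n]\big)$ is precisely the intended reasoning.
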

The notions of proper functions and
of proper elements in ordered $^*$\=/algebras, Definition~\ref{definition:properelement}, are closely related:
\begin{proposition} \label{proposition:properness}
  Let $\mathcal{I}$ be a proper \Sus\=/algebra of continuous functions on some locally
  compact Hausdorff space $X$, then an element $q\in \mathcal{I}^+_\Hermitian$ is a proper function
  if and only if it is proper in $\mathcal{I}$.
\end{proposition}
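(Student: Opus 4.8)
The plan is to prove the two implications separately by direct elementary estimates, using only the characterization recalled just above (for $q\in\mathcal{I}^+_\Hermitian$, being a proper function means $q^{-1}([0,n])$ is compact for every $n\in\NN$), the fact that $\mathcal{I}$ is a $^*$\=/subalgebra of $\Stetig(X)$ carrying the pointwise order (so $a\in\mathcal{I}^+_\Hermitian$ implies $a^2\in\mathcal{I}^+_\Hermitian$), and, for one direction, the hypothesis that $\mathcal{I}$ contains at least one proper function $p\in\mathcal{I}^+_\Hermitian$.

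For ``proper function $\Rightarrow$ proper in $\mathcal{I}$'' I would fix $\lambda\in{]0,\infty[}$ and $a\in\mathcal{I}^+_\Hermitian$, observe that $q^{-1}([0,2\lambda])$ is compact, hence pick $\mu\in[0,\infty[$ with $a(x)\le\mu$ for all $x$ with $q(x)\le 2\lambda$, and set $b:=a^2$. Given $\epsilon\in{]0,\infty[}$, choose $k\in\NN$ with $2^k\ge 1/\epsilon$. Then $a\le\mu\Unit+(q/\lambda)^k+\epsilon b$ follows pointwise by a three\=/way case distinction at each $x\in X$: if $a(x)\ge 1/\epsilon$ then $\epsilon b(x)=\epsilon a(x)^2\ge a(x)$; if $a(x)<1/\epsilon$ and $q(x)\le 2\lambda$ then $a(x)\le\mu$; and if $a(x)<1/\epsilon$ and $q(x)>2\lambda$ then $(q(x)/\lambda)^k>2^k\ge 1/\epsilon>a(x)$. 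Since all summands are non\=/negative, this is exactly the inequality required by Definition~\ref{definition:properelement}, and neither $\mu$ nor $b$ depends on $\epsilon$.

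For the converse ``proper in $\mathcal{I}$ $\Rightarrow$ proper function'' I would argue by contradiction. Write $K_m:=p^{-1}([0,m])$ for the compact exhaustion of $X$ supplied by the proper function $p$. If $q$ is not a proper function, there is $n_0\in\NN$ with $q^{-1}([0,n_0])$ not compact; being a closed subset of $X$, it is contained in no $K_m$, so there is a sequence $(x_m)_{m\in\NN}$ with $q(x_m)\le n_0$ and $p(x_m)>m$, whence $p(x_m)\to\infty$. Now apply the algebraic properness of $q$ to $\lambda:=n_0$ and $a:=p$: there are $\mu\in[0,\infty[$ and $b\in\mathcal{I}^+_\Hermitian$ such that for every $\epsilon\in{]0,\infty[}$ some $k\in\NN_0$ satisfies $p\le\mu\Unit+(q/n_0)^k+\epsilon b$. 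Evaluating at $x_m$ and using $0\le q(x_m)/n_0\le 1$, so $(q(x_m)/n_0)^k\le 1$ for every $k\in\NN_0$, gives $p(x_m)\le\mu+1+\epsilon b(x_m)$ for every $\epsilon>0$; letting $\epsilon\downarrow 0$ forces $p(x_m)\le\mu+1$ for all $m$, contradicting $p(x_m)\to\infty$.

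The only genuinely delicate point is the converse: the element $b$ produced by the definition of ``proper in $\mathcal{I}$'' need not be a bounded function, so one cannot simply say that $p$ is bounded on $\{q\le n_0\}$. The device that handles this is to take the scaling parameter $\lambda$ at least as large as $n_0$, which neutralises the power term on $\{q\le n_0\}$ and turns the estimate there into $p\le(\mu+1)\Unit+\epsilon b$ holding simultaneously for all $\epsilon>0$; the possibly unbounded term $\epsilon b$ then disappears in the limit $\epsilon\downarrow 0$. In the forward direction the analogous small trick is the choice $b:=a^2$, so that $\epsilon b$ absorbs $a$ precisely on the region where $a$ is large while the power term $(q/\lambda)^k$ takes care of the bounded\=/$q$ region.
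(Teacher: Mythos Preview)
Your proposal is correct and follows essentially the same approach as the paper. In both directions the arguments coincide: for ``proper function $\Rightarrow$ proper in $\mathcal{I}$'' the paper also sets $b=a^2$ and uses a compact sublevel set $q^{-1}([0,\lambda+1])$ (you use $q^{-1}([0,2\lambda])$) to bound $a$, then lets the power $(q/\lambda)^k$ absorb the remainder; for the converse the paper proceeds directly rather than by contradiction, showing $p(x)\le\mu+1$ on all of $q^{-1}([0,\lambda])$ so that this set is closed inside the compact $p^{-1}([0,\mu+1])$, which is exactly the content of your sequence argument.
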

\begin{proof}
  As $\mathcal{I}$ is a proper \Sus\=/algebra of continuous functions on $X$, there
  exists a proper function $p \in \mathcal{I}^+_\Hermitian$.
  
  First assume that some element $q\in \mathcal{I}^+_\Hermitian$ is proper in $\mathcal{I}$.
  Then for every $\lambda \in {]0,\infty[}$ there are $\mu \in {[0,\infty[}$ and $h\in \mathcal{I}^+_\Hermitian$
  such that for all $\epsilon \in {]0,\infty[}$ one finds $k\in \NN_0$ for which the estimate
  $p \le \mu \Unit + (q/\lambda)^k + \epsilon h$ holds. Let $K$ be the closed
  (possibly empty) preimage $K \coloneqq q^{-1}([0,\lambda])$, then 
  $p(x) \le \mu+q^k(x)/\lambda^k + \epsilon h(x) \le \mu + 1 + \epsilon h(x)$ for all $x\in K$ independently of $k$,
  hence $p(x) \le \mu + 1$ for all $x\in K$. So $K$ is a closed subset of
  the preimage $p^{-1}([0,\mu+1])$, which is compact because $p$ is a proper function.
  This shows that $K$ is also compact and one concludes that $q$ is a proper function.

  Conversely, assume that some element $q\in \mathcal{I}^+_\Hermitian$ is a proper function.
  Given $\lambda \in {]0,\infty[}$ and $f \in \mathcal{I}^+_\Hermitian$, then let $K$ be the
  compact (possibly empty) preimage $K \coloneqq q^{-1}([0,\lambda+1])$ and define 
  $\mu$ as the maximum of the set $\set{f(x)}{x\in K} \cup \{0\}$ and $g \coloneqq f^2 \in \mathcal{I}^+_\Hermitian$.
  Then for every $\epsilon \in {]0,\infty[}$ and all $x\in X$ one can check that the estimate $f(x)\le \epsilon^{-1} + \epsilon g(x)$ holds
  (consider the cases $f(x) \le \epsilon^{-1}$ and $f(x) \ge \epsilon^{-1}$ separately)
  and there is a $k\in \NN_0$ for which $\epsilon^{-1} \le (\lambda+1)^k/\lambda^k$.
  Thus $f(x) \le q^k(x)/\lambda^k + \epsilon g(x)$ holds for all $x\in X \backslash K$ and consequently $f \le \mu \Unit + (q/\lambda)^k + \epsilon g$,
  which shows that $q$ is proper in $\mathcal{I}$.
\end{proof}
Because of the above Proposition~\ref{proposition:properness} it will no longer be necessarily
to destinguish between ``proper functions'' and ``proper elements'' in proper \Sus\=/algebras
of continuous functions. However, note that Proposition~\ref{proposition:properness} does
not generalize to arbitrary intermediate $^*$\=/algebras of $\Stetig(X)$ for arbitrary
locally compact Hausdorff spaces $X$: For example, the constant $1$-function on $\RR$
is not a proper function, but is proper in $\Stetig(\RR)^\bd$. Nevertheless,
one could extend all functions in $\Stetig(\RR)^\bd$ to the Stone-\v{C}ech compactification
of $\RR$, on which $\Unit$ is a proper function, so the only problem is the ``wrong'' domain
of definition of functions in $\Stetig(\RR)^\bd$.

In a proper \Sus\=/algebra of continuous functions $\mathcal{I}$ on a locally compact Hausdorff space $X$
one can construct the pointwise square root $\sqrt{f} \in \mathcal{I}^+_\Hermitian$ for all $f\in \mathcal{I}^+_\Hermitian$.
Note that $0 \le \sqrt{f} \le \Unit + f$ holds pointwise for all $f\in \mathcal{I}^+_\Hermitian$
so that indeed $\sqrt{f} \in \mathcal{I}$ by Proposition~\ref{proposition:intermediateSus}.
It is also easy to check that this pointwise square root coincides
with the general one discussed in \cite{schoetz:EquivalenceOrderAlgebraicStructure}.
Similarly, one obtains the pointwise absolute value $\abs{f} = \sqrt{f^*f} \in \mathcal{I}^+_\Hermitian$ for all $f\in \mathcal{I}$
and the pointwise positive and negative parts $f_+ \coloneqq (\abs{f}+f)/2  \in \mathcal{I}^+_\Hermitian$ and $f_- \coloneqq (\abs{f}-f)/2 \in \mathcal{I}^+_\Hermitian$
for all $f\in \mathcal{I}_\Hermitian$, i.e.~$f_+(x) = \max\{f(x),0\}$ and $f_-(x) = \max \{ {- f(x)}, 0\}$ for all $x\in X$.
These constructions will be quite helpful in the following.

The central result for proper \Sus\=/algebras of continuous functions is the following ``Nullstellensatz'',
i.e.~a characterization of the vanishing ideals:
\begin{proposition} \label{proposition:nullstellen}
  Let $X$ be a locally compact Hausdorff space, $\mathcal{I}$ a proper \Sus\=/algebra of
  continuous functions on $X$ and $\mathcal{V}$ a closed ideal of $\mathcal{I}$. Define
  the closed subset $Z_{\mathcal{V}} \coloneqq \set[\big]{x\in X}{\forall_{f\in\mathcal{V}}:f(x) = 0}$
  of $X$, which describes the common zeros of the functions in $\mathcal{V}$,
  then $\mathcal{V} = \set[\big]{f\in \mathcal{I}}{\forall_{z\in Z_{\mathcal{V}}}:f(z) = 0}$.
\end{proposition}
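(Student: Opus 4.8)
The inclusion $\mathcal{V}\subseteq\set{f\in\mathcal{I}}{\forall_{z\in Z_{\mathcal{V}}}:f(z)=0}$ is immediate from the definition of $Z_{\mathcal{V}}$, so the plan is to prove the reverse inclusion: every $f\in\mathcal{I}$ that vanishes on $Z_{\mathcal{V}}$ lies in $\mathcal{V}$. Two reductions come first. Writing $f=(\RE f)_+-(\RE f)_-+\I\,(\IM f)_+-\I\,(\IM f)_-$ with the pointwise positive and negative parts of $\RE f,\IM f\in\mathcal{I}_\Hermitian$, each of the four summands belongs to $\mathcal{I}^+_\Hermitian$ and vanishes on $Z_{\mathcal{V}}$, and $\mathcal{V}$ is a linear subspace, so it suffices to treat $f\in\mathcal{I}^+_\Hermitian$. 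Next, for $\delta>0$ put $f_\delta\coloneqq(f-\delta\Unit)_+\in\mathcal{I}^+_\Hermitian$ (an element of $\mathcal{I}$ by Proposition~\ref{proposition:intermediateSus}, since $0\le f_\delta\le f$); from $0\le f-f_\delta\le\delta\Unit$ one gets $\seminorm{\infty}{f-f_\delta}\le\delta$, so $f_\delta\to f$ in the uniform metric as $\delta\to 0$, and since $\mathcal{V}$ is closed it is enough to show $f_\delta\in\mathcal{V}$ for each fixed $\delta>0$.

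Fix $\delta>0$ and set $W\coloneqq\overline{\set{x\in X}{f_\delta(x)\neq 0}}$. Since $W$ is contained in $\set{x\in X}{f(x)\ge\delta}$, which is disjoint from $Z_{\mathcal{V}}\subseteq\set{x\in X}{f(x)=0}$, the closed set $W$ lies in the open set $U\coloneqq X\setminus Z_{\mathcal{V}}$; and $U=\bigcup\set{x\in X}{g(x)>0}$, the union taken over the $g\in\mathcal{V}$ with $g\ge 0$, because for $x\notin Z_{\mathcal{V}}$ one picks $g\in\mathcal{V}$ with $g(x)\neq 0$ and then $g^*g\in\mathcal{V}$ is $\ge 0$ with $(g^*g)(x)>0$. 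By hypothesis $\mathcal{I}$ contains a proper function, which after adding $\Unit$ we take to be $p$ with $p\ge\Unit$, so that the sets $K_n\coloneqq\set{x\in X}{p(x)\le n}$, $n\in\NN$, are compact and exhaust $X$. The core of the argument is to construct $G\in\mathcal{V}$ with $G\ge 0$ and $G(x)>\frac{1}{p(x)+1}$ for all $x\in W$. For each $n$, the set $W\cap K_n$ is compact and contained in $U$, hence covered by finitely many of the sets $\set{x\in X}{g(x)>0}$, $g\in\mathcal{V}$, $g\ge 0$; let $G_n\in\mathcal{V}$ be the sum of these finitely many functions (and $G_n\coloneqq 0$ when $W\cap K_n=\emptyset$), so $G_n\ge 0$ and, by compactness, $\beta_n\coloneqq\min\set{G_n(x)}{x\in W\cap K_n}>0$ (put $\beta_n\coloneqq 1$ in the empty case). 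Then $\tilde G_n\coloneqq\beta_n^{-1}G_n\,(\Unit+\beta_n^{-1}G_n)^{-1}$ again lies in $\mathcal{V}$ — being the product of $\beta_n^{-1}G_n\in\mathcal{V}$ with the element $(\Unit+\beta_n^{-1}G_n)^{-1}$ of $\mathcal{I}$ (the inverse of a coercive element of the \Sus\=/algebra $\mathcal{I}$) — and it satisfies $0\le\tilde G_n\le\Unit$ and $\tilde G_n(x)\ge\frac12$ for $x\in W\cap K_n$. Setting $G\coloneqq\sum_{n=1}^\infty\frac{2}{n(n+1)}\tilde G_n$, the series converges uniformly (since $\seminorm{\infty}{\tilde G_n}\le 1$ and $\sum_n\frac{2}{n(n+1)}<\infty$), so $G$ is a limit, in the uniform metric, of finite linear combinations of elements of $\mathcal{V}$ and therefore lies in $\mathcal{V}$, and $G\ge 0$. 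If $x\in W$ and $n_x$ is minimal with $x\in K_{n_x}$, then $n_x<p(x)+1$ (using $p\ge\Unit$ when $n_x=1$) and $\tilde G_m(x)\ge\frac12$ for all $m\ge n_x$, so $G(x)\ge\sum_{m\ge n_x}\frac{1}{m(m+1)}=\frac{1}{n_x}>\frac{1}{p(x)+1}$.

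With this $G$, define $\phi\colon X\to{[0,\infty[}$ by $\phi(x)\coloneqq f_\delta(x)/G(x)$ where $G(x)>0$ and $\phi(x)\coloneqq 0$ where $G(x)=0$. Since $G>0$ on $W\supseteq\set{x\in X}{f_\delta(x)\neq 0}$, the set where $G$ vanishes lies in the open set $X\setminus W$ on which $f_\delta$ is identically zero; hence $\phi$ coincides with the continuous function $f_\delta/G$ on the open set $\set{x\in X}{G(x)>0}$ and vanishes on a neighbourhood of every point at which $G=0$, so $\phi\in\Stetig(X)$. Moreover $0\le\phi\le(p+\Unit)f_\delta$ pointwise — on $W$ because there $1/G(x)<p(x)+1$, and elsewhere because $\phi$ vanishes — and $(p+\Unit)f_\delta\in\mathcal{I}$, so $\phi\in\mathcal{I}$ by Proposition~\ref{proposition:intermediateSus}. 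Finally $f_\delta=G\phi$ with $G\in\mathcal{V}$ and $\mathcal{V}$ an ideal, hence $f_\delta\in\mathcal{V}$; letting $\delta\to 0$ yields $f\in\mathcal{V}$.

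I expect the construction of $G$ in the second paragraph to be the main obstacle: one needs an element of $\mathcal{V}$ that is bounded yet still bounded below, over the whole and in general non-compact set $W$, by the reciprocal of a function lying in $\mathcal{I}$, so that division of $f_\delta$ by it stays inside $\mathcal{I}$. Properness of $p$ supplies the compact exhaustion $(K_n)_{n\in\NN}$ along which to localise; the normalisation $\beta_n^{-1}G_n(\Unit+\beta_n^{-1}G_n)^{-1}$ replaces the covering functions by bounded ones with the uniform lower bound $\frac12$ on $W\cap K_n$; and the weights $\frac{2}{n(n+1)}$, whose tails decay like $1/n$, force $G$ to decay no faster than $1/p$. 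The remaining steps are routine applications of Proposition~\ref{proposition:intermediateSus} and pointwise estimates.
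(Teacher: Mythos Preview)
Your proof is correct. The overall mechanism matches the paper's---use the proper function $p$ to obtain a compact exhaustion, produce on each compact piece a positive element of $\mathcal{V}$ bounded below there, and then multiply by something in $\mathcal{I}$---but the organisation differs. The paper works with $f$ directly: it rescales to the bounded function $\hat g\coloneqq (p+\Unit)^{-1}(\abs{f}+\Unit)^{-1}f$, builds for each $n$ an $e_n\in\mathcal{V}\cap\mathcal{I}^+_\Hermitian$ with $e_n\ge 1$ on the compact set $\{\abs{\hat g}\ge 1/(1+n)\}$, multiplies by an explicit bounded $d_n$ so that $e_nd_n\to\hat g$, and then recovers $f$ by multiplying back by $(p+\Unit)(\abs{f}+\Unit)\in\mathcal{I}$. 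You instead reduce first to $f\ge 0$ and then to $f_\delta=(f-\delta\Unit)_+$, and rather than approximating by a sequence you assemble a single $G\in\mathcal{V}$ as a uniformly convergent series with telescoping weights $2/(n(n+1))$, achieving the quantitative lower bound $G>1/(p+\Unit)$ on the closed ``support'' $W$ of $f_\delta$; the clean factorisation $f_\delta=G\phi$ with $\phi\in\mathcal{I}$ then finishes. Your approach costs two preliminary reductions but yields a single dominating element and an explicit factorisation; the paper's approach avoids those reductions at the price of constructing an approximating sequence with a slightly ad hoc multiplier $d_n$. Both are equally valid and of comparable length.
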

\begin{proof}
  As $f(z)=0$ for all $f\in\mathcal{V}$ and all $z\in Z_{\mathcal{V}}$ by definition of $Z_{\mathcal{V}}$, the inclusion
  ``$\subseteq$'' is clear. 
  
  Conversely, consider the case of a function $f\in \mathcal{I}$ that fulfils
  $f(z)=0$ for all $z\in Z_{\mathcal{V}}$, and define $\hat{g} \coloneqq (p+\Unit)^{-1} (\abs{f}+\Unit)^{-1} f \in \Stetig(X)^\bd$,
  where $p\in\mathcal{I}^+_\Hermitian$ is a proper function. In order to prove that
  $f\in \mathcal{V}$, it is sufficient to construct a sequence
  $(g_n)_{n\in \NN}$ in $\mathcal{V}$ that converges against $\hat{g}$
  because $\mathcal{V}$ is a closed ideal of $\mathcal{I}$ by assumption.
  
  Fix $n\in \NN$. The estimate $\abs{\hat{g} (x)} < 1/(1+n)$ holds for all 
  $x\in X \backslash p^{-1}\big([0,n]\big)$. Because of this, $K_n \coloneqq \set[\big]{x\in X}{\abs{\hat{g}(x)} \ge 1/(1+n)}$
  is a closed subset of the compact preimage $p^{-1}\big([0,n]\big)$, hence is again compact.
  Note that $K_n \cap Z_{\mathcal{V}} = \emptyset$, because $\hat{g}$ is non-zero in all
  points of $K_n$ but zero on $Z_{\mathcal{V}}$. So for all $x\in K_n$ there exists an $h_x \in \mathcal{V}$ 
  with $h_x(x) \neq 0$. The open sets $\set[\big]{\tilde{x}\in X}{h_x(\tilde{x})\neq 0}$ for all $x\in K_n$
  cover $K_n$, so there exists $M\in \NN$ and $x_1,\dots,x_M \in K_n$ such that
  $K_n \subseteq \bigcup_{m=1}^M \set[\big]{\tilde{x}\in X}{h_{x_m}(\tilde{x})\neq 0}$. Thus
  $e_n \coloneqq  \lambda\sum_{m=1}^M h_{x_m}^* h_{x_m} \in \mathcal{V}\cap \mathcal{I}^+_\Hermitian$
  fulfils $e_n(x)>0$ for all $x\in K_n$ and all choices of $\lambda \in {]0,\infty[}$,
  hence even $e_n(x) \ge 1$ for all $x\in K_n$ if $\lambda$ is chosen sufficiently
  large because $K_n$ is compact.
  
  The above construction yields a sequence $(e_n)_{n\in \NN}$ of functions in 
  $\mathcal{V}\cap \mathcal{I}^+_\Hermitian$ with the property that $e_n(x) \ge 1$
  for all those $x\in X$ and $n\in \NN$ for which $\abs{\hat{g}(x)} \ge 1/(1+n)$. By multiplying each
  $e_n$ with a suitable function $d_n \in \Stetig(X)^\bd \subseteq \mathcal{I}$ one can now
  construct a sequence $\NN \ni n \mapsto g_n \coloneqq e_nd_n\in\mathcal{V}$
  that converges against $\hat{g}$: One possible choice is
  \begin{equation*}
    d_n(x) \coloneqq \begin{cases}
                       \hat{g}(x) / e_n(x) & \text{for all }x\in X\text{ with }e_n(x)\ge 1 \\
                       \hat{g}(x) & \text{for all }x\in X\text{ with }e_n(x)\le 1\,,
                     \end{cases}
  \end{equation*}
  then $g_n(x) = \hat{g}(x)$ holds for all $x\in X$ with $\abs{\hat{g}(x)} \ge 1/(1+n)$,
  and at the remaining points one has $\abs{g_n(x)} \le \abs{\hat{g}(x)}<1/(1+n)$, hence
  $\abs{g_n(x)-\hat{g}(x)} \le \abs{g_n(x)}+\abs{\hat{g}(x)} < 2/(1+n)$.
\end{proof}
Every locally compact Hausdorff space $X$ is a Tychonoff space \cite[Chap.~5, Thms.~17-18]{kelley:GeneralTopology},
i.e.~$X$ is Hausdorff and for every closed subset $Z$ of $X$ and every $x\in X\backslash Z$
there exists a continuous function $f\colon X\to {[0,1]}$
such that $f(x) = 1$ and $f(z) = 0$ for all $z\in Z$. This yields:
\begin{corollary} \label{corollary:galoisvanishing}
  Let $X$ be a locally compact Hausdorff space and $\mathcal{I}$ a proper \Sus\=/algebra of
  continuous functions on $X$. Assigning to every closed subset $Z$ of $X$ its
  \neu{vanishing ideal} $\mathcal{V}_Z$, i.e.~the closed $^*$\=/ideal
  $\mathcal{V}_Z \coloneqq \set[\big]{f\in\mathcal{I}}{\forall_{z\in Z}:f(z) = 0}$,
  yields a bijection between the closed subsets of $X$ and the closed $^*$\=/ideals of $\mathcal{I}$.
  Its inverse is the assignment of the closed set $Z_\mathcal{V}$ of common zeros to every closed
  $^*$\=/ideal $\mathcal{V}$ of $\mathcal{I}$ as in the previous Proposition~\ref{proposition:nullstellen}.
\end{corollary}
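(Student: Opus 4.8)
The plan is to exhibit the two assignments $Z\mapsto\mathcal{V}_Z$ and $\mathcal{V}\mapsto Z_\mathcal{V}$ as well-defined maps between the set of closed subsets of $X$ and the set of closed $^*$-ideals of $\mathcal{I}$, and then to check that the two compositions are the respective identities. First I would record that $Z\mapsto\mathcal{V}_Z$ really lands in the closed $^*$-ideals: $\mathcal{V}_Z$ is the common kernel of the point evaluations $\mathcal{I}\ni f\mapsto f(z)\in\CC$, $z\in Z$, which are unital $^*$-homomorphisms, so $\mathcal{V}_Z$ is a $^*$-ideal; and it is closed because convergence in the uniform metric of $\mathcal{I}$ implies $\seminorm{\infty}{f_n-f}\to 0$, hence pointwise convergence, so the conditions $f(z)=0$ survive in the limit. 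Dually, $Z_\mathcal{V}=\bigcap_{f\in\mathcal{V}}f^{-1}(\{0\})$ is an intersection of closed sets, hence a closed subset of $X$.

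For the composition $\mathcal{V}\mapsto Z_\mathcal{V}\mapsto\mathcal{V}_{Z_\mathcal{V}}$ there is nothing left to do beyond quoting Proposition~\ref{proposition:nullstellen}, which asserts exactly $\mathcal{V}_{Z_\mathcal{V}}=\mathcal{V}$ for every closed ideal — a fortiori every closed $^*$-ideal — of $\mathcal{I}$. (As an aside, this also shows a posteriori that every closed ideal of $\mathcal{I}$ equals some $\mathcal{V}_Z$ and is therefore automatically $^*$-stable, so the distinction between closed ideals and closed $^*$-ideals is vacuous here.)

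The remaining composition $Z\mapsto\mathcal{V}_Z\mapsto Z_{\mathcal{V}_Z}$ is where the topological input enters, and I would prove $Z_{\mathcal{V}_Z}=Z$ by two inclusions. The inclusion $Z\subseteq Z_{\mathcal{V}_Z}$ is immediate: every $f\in\mathcal{V}_Z$ vanishes on $Z$ by definition, so each point of $Z$ is a common zero of $\mathcal{V}_Z$. For the reverse inclusion let $x\in X\backslash Z$. Since $X$ is a locally compact Hausdorff space it is Tychonoff (as recalled before the corollary, using \cite{kelley:GeneralTopology}), so there is a continuous $f\colon X\to[0,1]$ with $f(x)=1$ and $f|_Z=0$. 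Being bounded, $f$ is a uniformly bounded element of $\Stetig(X)$, i.e.\ $f\in\Stetig(X)^\bd$, and hence $f\in\mathcal{I}$ because $\mathcal{I}$ is an intermediate $^*$-subalgebra; thus $f\in\mathcal{V}_Z$. Since $f(x)=1\neq 0$ this gives $x\notin Z_{\mathcal{V}_Z}$, so $Z_{\mathcal{V}_Z}\subseteq Z$, and the two maps are mutually inverse.

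I do not expect a genuine obstacle: the substantive work is already contained in Proposition~\ref{proposition:nullstellen}, and the rest is bookkeeping. The one point worth a moment's care is that the separating function furnished by the Tychonoff property may be taken bounded, which is precisely what places it in $\Stetig(X)^\bd$ and hence in the arbitrary intermediate $^*$-subalgebra $\mathcal{I}$ — this is what makes $\mathcal{V}_Z$ "large enough" to see every point outside $Z$, and without it the injectivity half of the bijection could fail.
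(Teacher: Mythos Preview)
Your proof is correct and follows essentially the same approach as the paper: both arguments verify that $\mathcal{V}_Z$ is a closed $^*$\=/ideal via the point evaluation homomorphisms, invoke Proposition~\ref{proposition:nullstellen} for the composition $\mathcal{V}\mapsto Z_\mathcal{V}\mapsto\mathcal{V}_{Z_\mathcal{V}}$, and use the Tychonoff property of locally compact Hausdorff spaces together with the inclusion $\Stetig(X)^\bd\subseteq\mathcal{I}$ to establish $Z_{\mathcal{V}_Z}=Z$. The paper phrases the bijection as ``surjective and injective'' rather than ``both compositions are identities'', but this is purely cosmetic.
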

\begin{proof}
  It is easy to check that the evaluation functionals $\mathcal{I} \ni f \mapsto f(x) \in \CC$
  are continuous unital $^*$\=/homomorphisms for all $x\in X$, so the vanishing ideals as the intersection of
  the kernels of such morphisms are closed $^*$\=/ideals of $\mathcal{I}$.
  Moreover, the mapping $Z \mapsto \mathcal{V}_Z$ is surjective onto the closed $^*$\=/ideals of $\mathcal{I}$ 
  by the previous  Proposition~\ref{proposition:nullstellen}, and it is also injective:
  If $Z$ is a closed subset of $X$, then the set of all 
  common zeros of the functions in $\mathcal{V}_Z$ is again $Z$,
  because for every $x\in X\backslash Z$ there exists an $f\in \mathcal{V}_Z$ with $f(x) = 1$
  as $X$ is a Tychonoff space. This also shows that the inverse of the construction of vanishing ideals
  $Z \mapsto \mathcal{V}_Z$ is the construction of the set of common zeros $\mathcal{V}\mapsto Z_\mathcal{V}$.
\end{proof}
For unital $^*$\=/homomorphisms to an Archimedean ordered $^*$\=/algebra one thus finds:
\begin{corollary} \label{corollary:niceMorphisms}
  Let $X$ be a locally compact Hausdorff space, $\mathcal{I}$ a proper \Sus\=/algebra of
  continuous functions on $X$ and $\Phi \colon \mathcal{I} \to \mathcal{A}$ a unital $^*$\=/homomorphism
  to an Archimedean ordered $^*$\=/algebra $\mathcal{A}$. Then $\Phi$ automatically is
  a positive unital $^*$\=/homomorphism and continuous, and there is a unique closed subset $Z$ of $X$ such that
  $\ker \Phi = \mathcal{V}_Z$ like in the previous Corollary~\ref{corollary:galoisvanishing}.
  Moreover, for every $f\in \mathcal{I}_\Hermitian$ with $\Phi(f) \in \mathcal{A}^+_\Hermitian$
  one has $\Phi(f) = \Phi(f_+)$ with $f_+ \in \mathcal{I}^+_\Hermitian$ the positive part
  of $f$. Similarly, for all $\rho \in {]0,\infty[}$ and every $f \in \mathcal{I}$ fulfilling
  $\seminorm{\infty}{\Phi(f)} \le \rho$
  one has $\Phi(\gamma_\rho \circ f ) = \Phi(f)$ with the continuous function
  $\gamma_\rho \colon \CC\to \CC$ defined as
  \begin{align}
    \gamma_\rho(z) \coloneqq \begin{cases}
                               z & \textup{if }\abs{z} \le \rho \\
                               \rho z / \abs{z} & \textup{if }\abs{z} \ge \rho
                             \end{cases}
  \end{align}
  for all $z\in \CC$.
\end{corollary}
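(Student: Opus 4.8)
The plan is to treat the assertions of the corollary in order. \emph{Positivity.} Since $\mathcal{I}$ is an intermediate $^*$\=/subalgebra of $\Stetig(X)$, Proposition~\ref{proposition:intermediateSus} makes $\mathcal{I}$ a \Sus\=/algebra, so each $f\in\mathcal{I}^+_\Hermitian$ has a pointwise square root $\sqrt{f}\in\mathcal{I}^+_\Hermitian$ with $f=(\sqrt{f})^*\sqrt{f}$; hence $\Phi(f)=\Phi(\sqrt{f})^*\Phi(\sqrt{f})\in\mathcal{A}^+_\Hermitian$. \emph{Continuity} then follows from the general fact recalled in Section~\ref{sec:preliminaries} that positive unital $^*$\=/homomorphisms between Archimedean ordered $^*$\=/algebras are continuous. \emph{The kernel:} as $\mathcal{A}$ is Hausdorff (its topology is that of $\metric_\infty$), $\ker\Phi$ is a closed $^*$\=/ideal of $\mathcal{I}$, so Corollary~\ref{corollary:galoisvanishing} provides a unique closed $Z\subseteq X$ with $\ker\Phi=\mathcal{V}_Z$.

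For the identity $\Phi(f)=\Phi(f_+)$ I would write $f=f_+-f_-$ with $f_\pm\in\mathcal{I}^+_\Hermitian$ and $f_+f_-=0$, and show $\Phi(f_-)=0$. From $\Phi(f)\in\mathcal{A}^+_\Hermitian$ one gets $\Phi(f_-)\le\Phi(f_+)$; conjugating this inequality by $d\coloneqq\Phi(\sqrt{f_-})$ in the sense of~\eqref{eq:orderedstaralg}, and using commutativity of $\mathcal{I}$ together with $(\sqrt{f_-})^2=f_-$, gives $\Phi(f_-)^*\Phi(f_-)=\Phi(f_-^2)=d^*\Phi(f_-)d\le d^*\Phi(f_+)d=\Phi(f_+f_-)=0$. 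As $\Phi(f_-)^*\Phi(f_-)\ge 0$, it vanishes, so $\seminorm{\infty}{\Phi(f_-)}=0$ by~\eqref{eq:inftyseminorm}, and therefore $\Phi(f_-)=0$.

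The identity $\Phi(\gamma_\rho\circ f)=\Phi(f)$ is obtained by the same device. Set $h\coloneqq f-\gamma_\rho\circ f\in\mathcal{I}$ and $q\coloneqq(\abs{f}-\rho\Unit)_+\in\mathcal{I}^+_\Hermitian$; the explicit form of $\gamma_\rho$ gives $\abs{h}=q$ pointwise, so $\Phi(h)^*\Phi(h)=\Phi(q^2)=\Phi(q)^*\Phi(q)$ and it is enough to show $\Phi(q)=0$. Now $\seminorm{\infty}{\Phi(f)}\le\rho$ yields $\Phi(\abs{f})^2=\Phi(f)^*\Phi(f)\le\rho^2\Unit$ with $\Phi(\abs{f})$ Hermitian, hence $\Phi(\abs{f})\le\rho\Unit$ by~\eqref{eq:inftyseminormalt}; since $q=\abs{f}-\rho\Unit+(\rho\Unit-\abs{f})_+$ we get $\Phi(q)\le\Phi\big((\rho\Unit-\abs{f})_+\big)$. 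Conjugating by $\Phi(\sqrt{q})$ and using $q\,(\rho\Unit-\abs{f})_+=0$ (disjoint supports) collapses the right-hand side to $0$, so $\Phi(q)^*\Phi(q)\le 0$, hence $\Phi(q)=0$; then $\Phi(h)^*\Phi(h)=0$, i.e.\ $\Phi(h)=0$.

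The hard part, and what dictates this strategy, is that $\mathcal{A}$ is assumed only to be Archimedean ordered, so there is no square root or functional calculus available inside $\mathcal{A}$: every auxiliary element must be produced as an image under $\Phi$ of something constructed inside $\mathcal{I}$, where the pointwise square root does exist. The recurring move is to sandwich the ``offending'' positive element between $0$ and an element with disjoint support, conjugate by the square root of that element taken in $\mathcal{I}$ so that the upper bound becomes $\Phi(0)=0$, and then invoke the elementary fact that $a^*a=0$ forces $\seminorm{\infty}{a}=0$ and hence $a=0$. The first three assertions are routine once Proposition~\ref{proposition:intermediateSus} and Corollary~\ref{corollary:galoisvanishing} are in hand.
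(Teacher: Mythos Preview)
Your proof is correct and follows essentially the same strategy as the paper's: positivity via square roots in $\mathcal{I}$, continuity from positivity, the kernel via Corollary~\ref{corollary:galoisvanishing}, and then a sandwich argument exploiting $f_+f_-=0$.

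The tactical differences are minor but worth noting. For $\Phi(f)=\Phi(f_+)$ the paper conjugates $\Phi(f)\ge 0$ by $\Phi(f_-)$ rather than by $\Phi(\sqrt{f_-})$, obtaining $0\le\Phi\big((f_-)^3\big)=-\Phi(f_-)\Phi(f)\Phi(f_-)\le 0$, and then uses the vanishing-ideal description $\ker\Phi=\mathcal{V}_Z$ to pass from $(f_-)^3\in\mathcal{V}_Z$ to $f_-\in\mathcal{V}_Z$; you instead get $\Phi(f_-)^*\Phi(f_-)=0$ directly and invoke that $\seminorm{\infty}{\argument}$ is a genuine norm on $\mathcal{A}^\bd$. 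For the $\gamma_\rho$ part, the paper simply reuses the $f_+$ result just proved: from $\Phi(\rho^2\Unit-f^*f)\ge 0$ one gets $(\rho^2\Unit-f^*f)_-\in\mathcal{V}_Z$, hence $\abs{f(z)}\le\rho$ on $Z$, hence $f-\gamma_\rho\circ f\in\mathcal{V}_Z$. Your argument is self-contained but slightly redundant, since once you have established $\Phi(\abs{f})\le\rho\Unit$ you could apply your own first part to $g=\rho\Unit-\abs{f}$ to conclude $\Phi(q)=\Phi(g_-)=0$ immediately rather than re-running the sandwich. Either way the content is the same.
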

\begin{proof}
  Given $f\in \mathcal{I}^+_\Hermitian$, then $\Phi(f) = \Phi(\sqrt{f})^2 \in \mathcal{A}^+_\Hermitian$,
  so $\Phi$ is a positive unital $^*$\=/homomorphism and therefore is continuous. This shows that
  its kernel is a closed $^*$\=/ideal of $\mathcal{I}$, thus $\ker \Phi = \mathcal{V}_Z$ for a 
  unique closed subset $Z$ of $X$ by the previous Corollary~\ref{corollary:galoisvanishing}.
  Moreover, assume that some function $f\in \mathcal{I}_\Hermitian$ fulfils $\Phi(f) \in \mathcal{A}^+_\Hermitian$,
  then the negative part of $f$
  fulfils $0 \le \Phi\big( ( f_- )^3 \big) = - \Phi( f_- ) \Phi(f) \Phi( f_- ) \le 0$
  because $f_- \ge 0$, $f_-f_+ = 0$ and $\Phi(f) \ge 0$, so 
  $( f_- )^3 \in \mathcal{V}_Z$ and consequently also $f_- \in \mathcal{V}_Z$.
  We conclude that $\Phi(f) = \Phi(f_+)$.
  Similarly, for $\rho \in {]0,\infty[}$ and $f \in \mathcal{I}$ with $\seminorm{\infty}{\Phi(f)} \le \rho$ one has
  $\Phi\big( \rho^2 \Unit_{\mathcal{I}} - f^*f \big) =  \rho^2\Unit_\mathcal{A} - \Phi(f)^*\Phi(f) \ge 0$
  so that $( \rho^2\Unit_{\mathcal{I}} - f^*f )_- \in \mathcal{V}_Z$.
  As $\gamma_\rho \circ f$ and $f$ differ only in points $x\in X$ for which $\abs{f(x)} > \rho$,
  this shows that $f - (\gamma_\rho \circ f) \in \mathcal{V}_Z$, so $\Phi(\gamma_\rho \circ f ) = \Phi(f)$.
\end{proof}
Under the bijection between closed subsets and closed $^*$\=/ideals from Corollary~\ref{corollary:galoisvanishing},
the subsets which consist of only one single point correspond to the closed $^*$\=/ideals with codimension $1$.
Thus one finds that unital $^*$\=/homomorphisms between proper \Sus\=/algebras of continuous
functions are always of particularly simple form:
\begin{proposition} \label{proposition:niceMorphisms}
  Let $X$ and $Y$ be two locally compact Hausdorff spaces and let $\mathcal{I}$ and $\mathcal{J}$
  be two proper \Sus\=/algebras of continuous functions on $X$ and $Y$, respectively. Then for every
  unital $^*$\=/homomorphism $\Phi \colon \mathcal{I} \to \mathcal{J}$ there exists a unique
  continuous map $\phi \colon Y \to X$ such that $\Phi(f) = f \circ \phi$ holds for all $f\in \mathcal{I}$.
\end{proposition}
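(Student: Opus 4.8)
The plan is to construct $\phi$ pointwise by exploiting the duality of Corollary~\ref{corollary:galoisvanishing} together with the structural result of Corollary~\ref{corollary:niceMorphisms}. First, note that $\mathcal{J}$ is a \Sus\=/algebra by Proposition~\ref{proposition:intermediateSus}, so for every fixed $y\in Y$ the evaluation functional $\delta_y\colon \mathcal{J}\to\CC$, $g\mapsto g(y)$, is a unital $^*$\=/homomorphism, and hence so is the composition $\delta_y\circ\Phi\colon\mathcal{I}\to\CC$. By Corollary~\ref{corollary:niceMorphisms} applied to $\delta_y\circ\Phi$ (viewing $\CC$ as an Archimedean ordered $^*$\=/algebra), its kernel is a closed $^*$\=/ideal $\mathcal{V}_Z$ for a unique closed $Z\subseteq X$; since $\delta_y\circ\Phi$ is a unital $^*$\=/homomorphism onto $\CC$, its kernel has codimension $1$, so by the correspondence in Corollary~\ref{corollary:galoisvanishing} the set $Z$ is a single point (a nonempty closed set with codimension-$1$ vanishing ideal cannot contain two distinct points, as a Tychonoff separation argument would then force codimension $\ge 2$; and $Z\neq\emptyset$ because $\mathcal{V}_X$ is all of $\mathcal{I}$, contradicting that the kernel is proper). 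Define $\phi(y)$ to be that point, so that $f(\phi(y))=0 \iff (\delta_y\circ\Phi)(f)=0$ for all $f\in\mathcal{I}$.

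Next I would upgrade this from ``same zeros'' to ``$\Phi(f)(y)=f(\phi(y))$ for all $f$''. Fix $y$ and $f\in\mathcal{I}$; applying the above to the function $f-f(\phi(y))\Unit_{\mathcal{I}}\in\mathcal{I}$, which vanishes at $\phi(y)$, gives $(\delta_y\circ\Phi)\bigl(f-f(\phi(y))\Unit_{\mathcal{I}}\bigr)=0$, i.e. $\Phi(f)(y)=f(\phi(y))$ since $\Phi$ is unital. This holds for every $y\in Y$, hence $\Phi(f)=f\circ\phi$ pointwise. Uniqueness of $\phi$ is then immediate: if $f\circ\phi=f\circ\phi'$ for all $f\in\mathcal{I}$ but $\phi(y)\neq\phi'(y)$ for some $y$, pick by the Tychonoff property of $X$ (and the fact that $\mathcal{A}^\bd\subseteq\mathcal{I}$, so $\mathcal{I}$ contains enough bounded continuous functions — indeed all of $\Stetig(X)^\bd$ when $X$ is compact, and in general one restricts an Urysohn function supported near $\phi(y)$) a function separating the two points, a contradiction.

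It remains to check that $\phi\colon Y\to X$ is continuous, and this is the step I expect to require the most care. The natural argument: continuity of $\phi$ is equivalent to $p\circ\phi$ being continuous for the proper function $p\in\mathcal{I}^+_\Hermitian$ on $X$ together with continuity of $\phi$ into each compact piece. More directly, since $\mathcal{I}$ contains all of $\Stetig(X)^\bd$ restricted suitably — more precisely, by Proposition~\ref{proposition:intermediateSus} $\mathcal{I}$ contains every $h\in\Stetig(X)_\Hermitian$ squeezed between two elements of $\mathcal{I}_\Hermitian$, in particular all bounded continuous functions on $X$ — we have $h\circ\phi=\Phi(h)\in\mathcal{J}\subseteq\Stetig(Y)$ is continuous for every bounded continuous $h$ on $X$. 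Because $X$ is a locally compact Hausdorff space, the bounded continuous functions separate points and closed sets (Tychonoff), so the initial topology they induce on $X$ is the given one; a map into $X$ is therefore continuous as soon as its composition with every bounded continuous function is continuous. Hence $\phi$ is continuous.

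The main obstacle is the codimension-$1$/single-point identification: one must argue carefully that the kernel of a surjective unital $^*$\=/homomorphism $\mathcal{I}\to\CC$ corresponds under Corollary~\ref{corollary:galoisvanishing} to a one-point set. The cleanest route is: a closed $^*$\=/ideal $\mathcal{V}_Z$ has codimension $1$ in $\mathcal{I}$ iff $Z$ is a singleton, which follows because if $z_1\neq z_2$ lie in $Z$ then Tychonoff separation produces $f\in\mathcal{I}$ with $f(z_1)=1$, $f(z_2)=0$, and both cosets $f+\mathcal{V}_Z$, $\Unit_{\mathcal{I}}-f+\mathcal{V}_Z$, $\mathcal{V}_Z$ are distinct, forcing codimension $\ge 2$; conversely $\mathcal{V}_{\{z\}}$ is exactly the kernel of $\delta_z$, which has codimension $1$. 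Once this bookkeeping is in place the rest is the routine pointwise construction and the Tychonoff continuity argument sketched above.
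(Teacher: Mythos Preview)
Your proof follows the paper's strategy: for each $y\in Y$ identify $\ker(\delta_y\circ\Phi)$ with the vanishing ideal of a single point $\phi(y)$ via Corollaries~\ref{corollary:niceMorphisms} and~\ref{corollary:galoisvanishing}, then recover $\Phi(f)(y)=f(\phi(y))$ from $f-f(\phi(y))\Unit\in\ker(\delta_y\circ\Phi)$. The paper differs only in two sub-arguments: it shows $Z_y$ is a singleton by proving directly that $\mathcal{V}_{\{z\}}=\ker\Phi_y$ for any $z\in Z_y$ (whence $\{z\}=Z_y$ by the uniqueness in Corollary~\ref{corollary:galoisvanishing}), rather than via a codimension count; and it proves continuity of $\phi$ by writing $\phi^{-1}(Z)=\bigcap_{f\in\mathcal{V}_Z}\Phi(f)^{-1}(\{0\})$ as an intersection of closed sets, rather than invoking the initial-topology characterisation of Tychonoff spaces.

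Two phrasings in your writeup need tightening. First, in the nonemptiness step you mean $\mathcal{V}_\emptyset=\mathcal{I}$, not $\mathcal{V}_X$ (the latter is $\{0\}$). Second, three \emph{distinct} cosets do not by themselves force codimension $\ge 2$; what you need is that $[f]$ and $[\Unit]$ are linearly independent in $\mathcal{I}/\mathcal{V}_Z$, which follows immediately by evaluating a relation $\alpha f+\beta\Unit\in\mathcal{V}_Z$ at $z_1$ and $z_2$.
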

\begin{proof}
  Let such a unital $^*$\=/homomorphism $\Phi \colon \mathcal{I} \to \mathcal{J}$ be given.
  
  For every $y\in Y$, define the unital $^*$\=/homomorphism $\Phi_y \colon \mathcal{I} \to \CC$,
  $f \mapsto \Phi_y(f) \coloneqq \Phi(f)(y)$. Then, as discussed in the previous Corollary~\ref{corollary:niceMorphisms},
  there exists a unique closed subset $Z_y$ of $X$ such that $\ker \Phi_y = \mathcal{V}_{Z_y}$.
  Moreover, $Z_y$ cannot be empty because then $\ker \Phi_y = \mathcal{V}_{Z_y} = \mathcal{I}$ would contradict
  $\Phi_y(\Unit_\mathcal{I}) = \Phi(\Unit_\mathcal{I})(y) = \Unit_{\mathcal{J}}(y) = 1$. Now given any
  $z \in Z_y$, then on the one hand $\ker \Phi_y = \mathcal{V}_{Z_y} \subseteq \mathcal{V}_{\{z\}}$,
  and on the other hand one finds for all $f\in \mathcal{V}_{\{z\}}$ that 
  $f - \Phi_y(f) \Unit_{\mathcal{I}} \in \ker \Phi_y \subseteq \mathcal{V}_{\{z\}}$
  implies that $f(z) - \Phi_y(f) = 0$ holds, so $\Phi_y(f) = f(z) = 0$ and thus $f \in \ker \Phi_y$.
  It follows that $\mathcal{V}_{\{z\}} = \ker \Phi_y = \mathcal{V}_{Z_y}$
  and therefore $\{z\} = Z_y$ by uniqueness of $Z_y$, so~$Z_y$ consists of exactly one single point of $X$.
  
  It is now possible to define a map $\phi \colon Y \to X$, $y \mapsto \phi(y)$,
  with $\phi(y)$ being the unique point for which $\{\phi(y)\} = Z_y$. Then
  $\Phi(f)(y) = \Phi_y(f) = \Phi_y\big( f-f(\phi(y))\Unit_\mathcal{I} \big) + f(\phi(y)) = f(\phi(y))$
  holds for all $f\in \mathcal{I}$ and all $y\in Y$ because $f-f(\phi(y))\Unit_\mathcal{I} \in \mathcal{V}_{\{\phi(y)\}} = \ker \Phi_y$,
  so~$\Phi(f) = f\circ \phi$ for all $f\in \mathcal{I}$. Moreover, $\phi$ is continuous: Indeed, 
  the bijective correspondence between closed subsets of $X$ and closed $^*$\=/ideals of $\mathcal{I}$
  from Corollary~\ref{corollary:galoisvanishing} allows to describe the preimage $\phi^{-1}(Z)$ of some closed subset $Z$ of $X$ as
  \begin{align*}
    \phi^{-1}(Z)
    =
    \set[\big]{y\in Y}{ \phi(y) \in Z }
    =
    \set[\big]{y\in Y}{ \forall_{f\in \mathcal{V}_Z}: f(\phi(y)) = 0 }
    =
    \set[\big]{y\in Y}{ \forall_{f\in \mathcal{V}_Z}: \Phi(f)(y) = 0 }
    ,
  \end{align*}
  which is a closed subset of $Y$ because $\Phi(f)$ is a continuous function on $Y$ for all $f\in\mathcal{I}$.
  
  Finally, it only remains to check that $\phi$ is uniquely determined: So let $\phi \colon Y \to X$
  be any map for which $\Phi(f) = f\circ \phi$ holds and let $y\in Y$ be given, then for all $f\in \ker \Phi_y = \mathcal{V}_{Z_y}$
  one has $0 = \Phi(f)(y) = f(\phi(y))$, i.e.~$\phi(y)$ is a common zero of all $f\in \mathcal{V}_{Z_y}$ and therefore 
  $\phi(y) \in Z_y$ by Corollary~\ref{corollary:galoisvanishing}. As $Z_y$
  consists of exactly one single element of $X$ this means that $\{ \phi(y) \} = Z_y$.
\end{proof}
Note that a statement analogous to the above Proposition~\ref{proposition:niceMorphisms}
is not true if $\mathcal{I} = \Stetig(X)^\bd$ for non-compact $X$ and if $Y = \{*\}$ is the topological
space with only one single point: In this case $\Stetig(\{*\}) \cong \CC$ and
the set of unital $^*$\=/homomorphisms from the commutative $C^*$\=/algebra $\Stetig(X)^\bd$
to $\CC$ is weak\=/$^*$-compact as a consequence of the Banach-Alaoglu theorem
and especially does not consist of only the evaluation functionals $\Stetig(X)^\bd \ni f \mapsto f(x) \in \CC\cong\Stetig(\{*\})$
at points $x\in X$. This means that there are unital $^*$\=/homomorphisms
$\Stetig(X)^\bd \to \Stetig(\{*\})$ that are not of the form
$f \mapsto f \circ \iota$ with $\iota \colon \{ * \} \to X$, $\iota(*) = x$.
So the assumption that $\mathcal{I}^+_\Hermitian$
contains a proper function is crucial.
\section{Universal Continuous Calculi -- Construction}  \label{sec:cccon}
While proper \Sus\=/algebras of continuous functions automatically have very well-behaved order properties,
see e.g.~Corollary~\ref{corollary:niceMorphisms}, this is not true for general ordered $^*$\=/algebras,
especially not for $\CC[t_1,\dots,t_N]$ with the pointwise order. In order to extend the polynomial calculus to a continuous
calculus it will therefore be necessary to show that similar properties are indeed fulfilled in special cases,
which requires some variant of the Positivstellensatz.

The universal continuous calculus for $N \in \NN$ pairwise commuting Hermitian elements $a_1, \dots, a_N$ of a \Sus\=/algebra $\mathcal{A}$ can be constructed
in four steps: First one constructs a positive unital $^*$\=/homomorphism $\Phi_{\mathrm{rt}}$ from a $^*$\=/algebra $\Stetig_{\mathrm{rt}}(\RR^N)$ of certain rational
functions to $\mathcal{A}$. Its restriction to some bounded rational functions can then be extended continuously to a (necessarily positive) unital $^*$\=/homomorphism
$\Phi_{\SS}$ defined on $\Stetig_\SS(\RR^N)$, the continuous functions on $\RR^N$ that extend continuously to the one-point compactification $\RR^N \cup \{\infty\} \cong \SS^N$.
In a third step, the domain of definition of $\Phi_\SS$ is extended to a proper \Sus\=/algebra $\mathcal{I}_\RR$ of continuous functions on $\RR^N$,
which is essentially the largest possible one that admits an extension $\Phi_\RR \colon \mathcal{I}_\RR \to \mathcal{A}$ of $\Phi_\SS$.
The final step is to divide out the kernel of $\Phi_\RR$ and to interpret the resulting quotient $\mathcal{I}_\RR / \ker \Phi_\RR$ 
as a proper \Sus\=/algebra of continuous functions on a closed subset $\spec(a_1,\dots,a_N)$ of $\RR^N$. It will also become clear from the
construction that the universal continuous calculus for $a_1,\dots,a_N$ maps to the bicommutant $\{a_1,\dots,a_N\}''$.
\begin{lemma} \label{lemma:rationalcalculus}
  If $\pi \in \CC[s,t_1,\dots,t_N]_\Hermitian$ with $N\in \NN$ fulfils
  $\pi\big((1+x_1^2+\dots+x_N^2)^{-1},x_1,\dots,x_N\big) \ge 0$ for all $x \in \RR^N$, then
  $\pi\big((\Unit+a_1^2+\dots+a_N^2)^{-1},a_1,\dots,a_N\big) \in \mathcal{A}^+_\Hermitian$ holds for every
  Archimedean ordered $^*$\=/algebra $\mathcal{A}$ and every pairwise commuting $N$-tuple $a_1,\dots,a_N \in \mathcal{A}_\Hermitian$
  for which $\Unit+a_1^2+\dots+a_N^2$ is invertible.
\end{lemma}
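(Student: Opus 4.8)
The plan is to reduce the claim to a statement about polynomials that can be handled by the classical constructive Positivstellensatz of Habicht (or the Pólya-type result of Averkov), thereby avoiding the full Krivine--Stengle machinery. The key observation is that the rational function $g \colon \RR^N \to \RR$, $x \mapsto \pi\big((1+x_1^2+\dots+x_N^2)^{-1},x_1,\dots,x_N\big)$ has a single polynomial denominator: writing $q \coloneqq 1 + t_1^2 + \dots + t_N^2 \in \RR[t_1,\dots,t_N]$ and letting $m$ be the degree of $\pi$ in its first argument $s$, the polynomial $\sigma \coloneqq q^m\,\pi(q^{-1},t_1,\dots,t_N)$ actually lies in $\RR[t_1,\dots,t_N]$ (clearing denominators is legitimate because each $s$-power of exponent $\le m$ picks up a compensating factor of $q$), and $\sigma(x) = q(x)^m\, g(x) \ge 0$ for all $x\in\RR^N$ since $q(x) > 0$. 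So $\sigma$ is a globally nonnegative polynomial.

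Next I would invoke a constructive version of Hilbert's 17th problem in the strictly positive case: since $q$ itself is coercive, one does \emph{not} work with $\sigma$ directly but with the perturbed polynomial $\sigma_\delta \coloneqq \sigma + \delta\, q^{M}$ for small $\delta > 0$ and $M$ large, which is strictly positive on $\RR^N$ and hence, by \cite{habicht:ZerlegungStriktDefiniterFormen} (or after homogenizing, by Pólya's theorem as used in \cite{averkov:ConstructiveProofsOfSomePositivstellensaetze}), admits an identity of the form $q^{\ell}\,\sigma_\delta = \sum_{j} \rho_j^2$ with $\rho_j \in \RR[t_1,\dots,t_N]$ and $\ell \in \NN_0$. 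Substituting $a_1,\dots,a_N$ for $t_1,\dots,t_N$ into such a polynomial identity is a purely algebraic operation (it holds in the commutative ring $\RR[t_1,\dots,t_N]$, hence maps to any commutative $^*$-subalgebra, in particular to $\{a_1,\dots,a_N\}''$), yielding $Q^{\ell}\big(\Sigma + \delta\, Q^{M}\big) = \sum_j R_j^* R_j \in \mathcal{A}^+_\Hermitian$, where $Q \coloneqq \Unit + a_1^2 + \dots + a_N^2$, $\Sigma \coloneqq Q^m\,\pi(Q^{-1},a_1,\dots,a_N)$, and $R_j \coloneqq \rho_j(a_1,\dots,a_N)$. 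Since $Q$ is invertible by hypothesis with $Q \ge \Unit$, multiplying on both sides by $Q^{-\ell}$ (which is Hermitian, indeed in $(\mathcal{A}^\bd)^+_\Hermitian$, and commutes with everything in sight) and then by $Q^{-m}$ gives $\pi(Q^{-1},a_1,\dots,a_N) + \delta\, Q^{M-m} = Q^{-m-\ell}\sum_j R_j^* R_j \ge 0$ — here one has to be a little careful that $M \ge m$, which is arranged by the choice of $M$.

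Finally I would let $\delta \downarrow 0$ and use the Archimedean property: $\delta\,Q^{M-m}$ is a fixed positive element scaled by $\delta > 0$, so from $-\pi(Q^{-1},a_1,\dots,a_N) \le \delta\, Q^{M-m}$ for all $\delta \in {]0,\infty[}$ and the fact that $\mathcal{A}_\Hermitian$ is an Archimedean ordered vector space, we conclude $-\pi(Q^{-1},a_1,\dots,a_N) \le 0$, i.e. $\pi\big((\Unit+a_1^2+\dots+a_N^2)^{-1},a_1,\dots,a_N\big) \in \mathcal{A}^+_\Hermitian$, as desired. I expect the main obstacle to be the bookkeeping around the constructive Positivstellensatz — specifically, getting a denominator that is a \emph{power of $q$} rather than an arbitrary positive polynomial, and matching the exponents $m$, $M$, $\ell$ so that every intermediate expression is a genuine polynomial identity before substitution; the use of $\delta$-perturbation to pass from nonnegative to strictly positive (so that Habicht's theorem applies) is the standard device, but one must check that the perturbing term $q^M$ is itself a power of $q$ so that the final division by $Q^{-m-\ell}$ stays inside the commutative algebra generated by $Q^{-1}$ and the $a_n$. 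Once the polynomial identity is in hand, the transfer to $\mathcal{A}$ and the limit argument are routine.
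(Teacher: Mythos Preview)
Your proposal is correct and follows the same overall strategy as the paper's proof---reduce to polynomial positivity by clearing the $s$-denominator, perturb to strict positivity, invoke a P\'olya/Habicht-type representation, substitute into $\mathcal{A}$, conjugate away the extra $Q$-powers, and finish with the Archimedean limit---but the implementation of the representation step differs. You homogenize $\sigma_\delta$ with one extra variable $t_0$ and appeal to Habicht's theorem for positive definite forms in $\RR^{N+1}$, then dehomogenize (set $t_0=1$) to obtain $q^\ell\sigma_\delta=\sum_j\rho_j^2$ with $q=1+t_1^2+\dots+t_N^2$ arising as the dehomogenized $t_0^2+\dots+t_N^2$. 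The paper instead introduces $2N{+}1$ auxiliary variables $u_1,\dots,u_N,v_1,\dots,v_N,w$, builds a homogeneous form $\rho$ that is nonnegative on the closed positive orthant, applies P\'olya's theorem directly so that all monomial coefficients become nonnegative after multiplication by a power of $u_1+\dots+v_N+w$, and then substitutes $u_n=(\chi\Unit+\chi^{-1}a_n)^2/4$, $v_n=(\chi\Unit-\chi^{-1}a_n)^2/4$, $w=\Unit$ with $\chi$ chosen so that $N\chi^4+2\chi^2=1$; this makes every monomial a square in $\mathcal{A}$ and forces $w+\sum_n(u_n+v_n)=(2\chi^2)^{-1}p$. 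Your route is shorter once Habicht is taken as a black box; the paper's route stays with the more elementary P\'olya theorem at the price of the explicit $\chi$-substitution. One point you should spell out: Habicht applies to positive \emph{definite} forms, so you must check that the homogenization $\tilde\sigma_\delta$ does not vanish on the hyperplane $t_0=0$, i.e.\ that the top-degree part of $\sigma_\delta$ is positive definite in $t_1,\dots,t_N$. This is exactly what choosing $M$ large (so that the leading form is $\delta(t_1^2+\dots+t_N^2)^M$) achieves, and it also shows that $M$ can be fixed independently of $\delta$ (only $\ell$ varies), which is what makes your final Archimedean limit $-\pi(Q^{-1},a)\le\delta\,Q^{M-m}$ for all $\delta>0$ legitimate.
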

\begin{proof}
  Let $\pi \in \CC[s,t_1,\dots,t_N]_\Hermitian$ be such a polynomial fulfilling
  $\pi\big((1+x_1^2+\dots+x_N^2)^{-1},x_1,\dots,x_N\big) \ge 0$ for all $x \in \RR^N$.
  There are $K,L\in \NN_0$ for which $\pi$ can be expanded as 
  \begin{align*}
    \pi
    =
    \sum_{\substack{k \in \NN_0 \\ k \le K}}
    \sum_{\substack{\ell \in (\NN_0)^N \\ \ell_1 + \dots+ \ell_N \le L}}
    \pi_{k,\ell} \,s^k t_1^{\ell_1} \cdots t_N^{\ell_N}
  \end{align*}
  with coefficients $\pi_{k,\ell} \in \RR$. Using these coefficients, one can now construct a homogeneous polynomial
  $\rho \in \CC[u_1, \dots, u_N, v_1,\dots,v_N,w]_\Hermitian$ of degree $2K+L$ as
  \begin{align*}
    \rho \coloneqq
    \sum_{\substack{k \in \NN_0 \\ k \le K}}
    \sum_{\substack{\ell \in (\NN_0)^N \\ \ell_1 + \dots+ \ell_N \le L}}
    \pi_{k,\ell}\,
    w^{L-\ell_1-\dots-\ell_N+2k}
    \big(w^2+(u_1-v_1)^2 + \dots + (u_N-v_N)^2\big)^{K-k}
    \prod_{n=1}^N (u_n-v_n)^{\ell_n}
    .
  \end{align*}
  Then $z^{-(2K+L)} \rho(x,y,z) = \rho (x/z, y/z, 1) = 
  \lambda^{K} \pi \big(\lambda^{-1},(x_1-y_1)/z_1, \dots, (x_N-y_N)/z_N\big) \ge 0$
  with $\lambda \coloneqq 1 + (x_1-y_1)^2 / z_1^2 + \dots + (x_N-y_N)^2 / z_N^2$
  holds for all $(x,y,z) \in \RR^N \times \RR^N \times (\RR\backslash\{0\})$, which 
  shows that $\rho(x,y,z) \ge 0$ for all $(x,y,z) \in \RR^N \times \RR^N \times {]0,\infty[}$,
  and thus even for all $(x,y,z) \in \RR^N \times \RR^N \times {[0,\infty[}$ by continuity.
  Fix $\epsilon \in {]0,\infty[}$, then the homogeneous polynomial $\rho + \epsilon(u_1+\dots+u_N+v_1+\dots+v_N+w)^{2K+L}$
  is strictly positive on $( {[0,\infty[^N} \times {[0,\infty[^N} \times {[0,\infty[} ) \backslash \{(0,0,0)\}$,
  so by a theorem of P\'olya, \cite[Thm.~56]{hardy.littlewood.polya:inequalities},
  there exists an $M \in \NN_0$ such that 
  \begin{align*}
    \sigma_\epsilon \coloneqq (u_1+\dots+u_N+v_1+\dots+v_N+w)^M \Big( \rho + \epsilon(u_1+\dots+u_N+v_1+\dots+v_N+w)^{2K+L} \Big)
  \end{align*}
  can be expanded as
  \begin{align*}
    \sigma_\epsilon
    =
    \sum_{\substack{e,f \in (\NN_0)^N \\ e_1 + \dots + e_N + f_1 + \dots + f_N \le 2K+L+M}}
    \sigma_{\epsilon;e,f} \,u_1^{e_1} \cdots u_N^{e_N} v_1^{f_1} \cdots v_N^{f_N} w^{2K+L+M-e_1 - \dots - e_N - f_1 - \dots - f_N}
  \end{align*}
  with positive coefficients $\sigma_{\epsilon;e,f} \in {[0,\infty[}$.
  
  Now let $\mathcal{A}$ be any Archimedean ordered $^*$\=/algebra, $a_1,\dots,a_N \in \mathcal{A}_\Hermitian$
  pairwise commuting and such that $p \coloneqq \Unit+a_1^2 + \dots + a_N^2$ is invertible in $\mathcal{A}$.
  Note that $p$ and hence also $p^{-1}$ commute with all $a_1,\dots,a_N$.
  Define the element
  \begin{align*}
    b_\epsilon
    \coloneqq
    \sigma_\epsilon \bigg( 
      \frac{(\chi\Unit+\chi^{-1}a_1)^2}{4}, \dots, \frac{(\chi\Unit+\chi^{-1}a_N)^2}{4},
      \frac{(\chi\Unit-\chi^{-1}a_1)^2}{4}, \dots, \frac{(\chi\Unit-\chi^{-1}a_N)^2}{4},
      \Unit
    \bigg)
  \end{align*}
  of $\mathcal{A}$, where $\chi \in {]0,\infty[}$ will be specified later on. Then $b_\epsilon$
  is a sum of squares of Hermitian elements of $\mathcal{A}$, so $b_\epsilon \in \mathcal{A}_\Hermitian^+$.
  In order to simplify the expression for $b_\epsilon$, note that
  $(\chi\Unit+\chi^{-1}a_n)^2/4 - (\chi\Unit-\chi^{-1}a_n)^2/4 = a_n$, and similarly
  $(\chi\Unit+\chi^{-1}a_n)^2/4 + (\chi\Unit-\chi^{-1}a_n)^2/4 = (\chi^4 \Unit + a_n^2)/(2\chi^2)$
  holds for all $n\in \{1,\dots,N\}$, so
  \begin{align*}
    \Unit + \sum_{n=1}^N \bigg( \frac{(\chi\Unit+\chi^{-1}a_n)^2}{4} + \frac{(\chi\Unit-\chi^{-1}a_n)^2}{4} \bigg)
    =
    \frac{1}{2\chi^2} \big( (N\chi^4 + 2\chi^2) \Unit + a_1^2 + \dots + a_N^2 \big)
    .
  \end{align*}
  One can choose $\chi \in {]0,\infty[}$ in such a way that $N\chi^4 + 2\chi^2 = 1$, namely $\chi = ( (N^{-2} + N^{-1})^{1/2} - N^{-1} )^{1/2}$,
  and with this choice one now finds that
  \begin{align*}
    b_\epsilon
    =
    (2\chi^2)^{-M}p^M \Big( p^K \pi\big( p^{-1}, a_1, \dots,a_N \big)  +\epsilon (2\chi^2)^{-(2K+L)}p^{2K+L} \Big)
    .
  \end{align*}
  As $b_\epsilon$ by construction commutes with all $a_n$, $n\in \{1,\dots,N\}$, so that $b_\epsilon a_n^2 = a_n b_\epsilon a_n \ge 0$,
  it follows that
  \begin{align*}
    p^K \pi\big( p^{-1}, a_1, \dots,a_N \big)  +\epsilon (2\chi^2)^{-(2K+L)}p^{2K+L}
    =
    (2\chi^2)^{M} p^{-M} \underbrace{b_\epsilon ( \Unit+a_1^2+ \dots+a_N^2)^M}_{\ge 0} p^{-M}
    \ge
    0
    .
  \end{align*}
  But as this is true for all $\epsilon \in {]0,\infty[}$ and as $\mathcal{A}$ is Archimedean,
  one has $p^K \pi( p^{-1}, a_1, \dots,a_N ) \ge 0$, and thus, by a similar argument,
  \begin{align*}
    \pi( p^{-1}, a_1, \dots,a_N ) = p^{-K} \underbrace{ p^K \pi( p^{-1}, a_1, \dots,a_N ) (\Unit+a_1^2 + \dots + a_N^2)^K}_{\ge 0} p^{-K}
  \end{align*}
  because $p^K \pi( p^{-1}, a_1, \dots,a_N )$ commutes with all $a_1, \dots,a_N$.
\end{proof}
One can now construct a ``rational calculus'':
\begin{definition}
  For $N\in \NN$ let $\Stetig_{\mathrm{rt}}(\RR^N)$ be the unital $^*$\=/subalgebra of $\Stetig(\RR^N)$
  that is generated by the functions $\pr_1, \dots, \pr_N$ and $(\Unit + \pr_1^2 + \dots + \pr_N^2 )^{-1}$,
  endowed with the order on Hermitian elements that it inherits from $\Stetig(\RR^N)$, i.e.~the pointwise one.
\end{definition}
\begin{proposition} \label{proposition:rationalcalculus}
  Let $\mathcal{A}$ be an Archimedean ordered $^*$\=/algebra, $N\in \NN$ and $a_1,\dots,a_N\in \mathcal{A}_\Hermitian$
  pairwise commuting, and assume that $\Unit + a_1^2 + \dots + a_N^2$ has a multiplicative inverse $(\Unit + a_1^2 + \dots + a_N^2)^{-1}  \in \mathcal{A}$.
  Then there exists a unique positive unital $^*$\=/homomorphism
  $\Phi_{\mathrm{rt}} \colon \Stetig_{\mathrm{rt}}(\RR^N) \to \mathcal{A}$ fulfilling $\Phi_{\mathrm{rt}}(\pr_n) = a_n$
  for all $n\in \{1,\dots,N\}$. Moreover, $\Phi_{\mathrm{rt}}(r) \in \{a_1,\dots,a_N\}''$ for all $r\in \Stetig_\mathrm{rt}(\RR^N)$.
\end{proposition}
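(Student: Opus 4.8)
The plan is to build $\Phi_{\mathrm{rt}}$ by factoring through the polynomial algebra $\CC[s,t_1,\dots,t_N]$. First I would consider the canonical unital $^*$\=/homomorphism $\mathrm{ev} \colon \CC[s,t_1,\dots,t_N] \to \mathcal{A}$ sending $s \mapsto p^{-1}$ and $t_n \mapsto a_n$, where $p \coloneqq \Unit + a_1^2 + \dots + a_N^2$ (this exists and is unique by the universal property of the polynomial algebra, since $p^{-1}, a_1, \dots, a_N$ pairwise commute). Analogously there is the evaluation $\mathrm{ev}_{\RR} \colon \CC[s,t_1,\dots,t_N] \to \Stetig_{\mathrm{rt}}(\RR^N)$ sending $s \mapsto (\Unit + \pr_1^2 + \dots + \pr_N^2)^{-1}$ and $t_n \mapsto \pr_n$; by definition of $\Stetig_{\mathrm{rt}}(\RR^N)$ as the unital $^*$\=/subalgebra generated by these elements, $\mathrm{ev}_{\RR}$ is surjective. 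The idea is to show that $\ker \mathrm{ev}_{\RR} \subseteq \ker \mathrm{ev}$, so that $\mathrm{ev}$ factors as $\Phi_{\mathrm{rt}} \circ \mathrm{ev}_{\RR}$ for a well-defined unital $^*$\=/homomorphism $\Phi_{\mathrm{rt}} \colon \Stetig_{\mathrm{rt}}(\RR^N) \to \mathcal{A}$, which then automatically satisfies $\Phi_{\mathrm{rt}}(\pr_n) = a_n$.

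The inclusion of kernels is where Lemma~\ref{lemma:rationalcalculus} enters, and this is the main obstacle. Given $\pi \in \ker \mathrm{ev}_{\RR}$, i.e.~$\pi\big((1+x_1^2+\dots+x_N^2)^{-1},x_1,\dots,x_N\big) = 0$ for all $x \in \RR^N$, I want $\pi(p^{-1},a_1,\dots,a_N) = 0$. Decompose $\pi = \RE(\pi) + \I\,\IM(\pi)$ into Hermitian parts; since both $\RE(\pi)$ and $\IM(\pi)$ vanish on the graph of $(1+\lvert x\rvert^2)^{-1}$, it suffices to treat Hermitian $\pi$. A Hermitian $\pi$ vanishing on that set is in particular $\ge 0$ and $\le 0$ there, so Lemma~\ref{lemma:rationalcalculus} applied to both $\pi$ and $-\pi$ gives $\pi(p^{-1},a_1,\dots,a_N) \in \mathcal{A}^+_\Hermitian$ and $-\pi(p^{-1},a_1,\dots,a_N) \in \mathcal{A}^+_\Hermitian$, hence $\pi(p^{-1},a_1,\dots,a_N) = 0$ by anti-symmetry of the order. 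This yields the factorization and hence existence of $\Phi_{\mathrm{rt}}$.

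For positivity: if $f \in \Stetig_{\mathrm{rt}}(\RR^N)^+_\Hermitian$, write $f = \mathrm{ev}_{\RR}(\pi)$ with $\pi$ Hermitian (take the real part of any preimage), so $\pi\big((1+\lvert x\rvert^2)^{-1},x\big) \ge 0$ for all $x$, and Lemma~\ref{lemma:rationalcalculus} gives $\Phi_{\mathrm{rt}}(f) = \pi(p^{-1},a_1,\dots,a_N) \in \mathcal{A}^+_\Hermitian$. Uniqueness is immediate: any positive (hence any) unital $^*$\=/homomorphism $\Psi$ with $\Psi(\pr_n) = a_n$ must send $\Unit + \pr_1^2 + \dots + \pr_N^2$ to $p$, hence its inverse to $p^{-1}$, and is thus determined on the generating set, hence everywhere. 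Finally, for the bicommutant claim, note $a_n \in \{a_1,\dots,a_N\}''$ for each $n$ and $p^{-1} \in \{a_1,\dots,a_N\}''$ (the inverse of an element of a commutative unital $^*$\=/subalgebra containing it lies in that subalgebra, or directly: $p^{-1}$ commutes with anything $p$ commutes with); since $\{a_1,\dots,a_N\}''$ is a unital $^*$\=/subalgebra of $\mathcal{A}$ and $\Stetig_{\mathrm{rt}}(\RR^N)$ is generated by $\pr_1,\dots,\pr_N$ and $(\Unit+\pr_1^2+\dots+\pr_N^2)^{-1}$, the image of $\Phi_{\mathrm{rt}}$ lies in $\{a_1,\dots,a_N\}''$. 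The only genuinely non-routine input is Lemma~\ref{lemma:rationalcalculus} itself, which is already proven above; everything else is bookkeeping with universal properties and the order axioms.
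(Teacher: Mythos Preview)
Your proposal is correct and follows essentially the same approach as the paper's proof: both factor through the polynomial algebra $\CC[s,t_1,\dots,t_N]$ via the surjection onto $\Stetig_{\mathrm{rt}}(\RR^N)$, use Lemma~\ref{lemma:rationalcalculus} applied to $\pi$ and $-\pi$ (after reducing to the Hermitian case) to obtain well-definedness and positivity, and then read off uniqueness and the bicommutant claim from the generating set. The only difference is notational.
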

\begin{proof}
  Let $[\argument]_{\mathrm{rt}} \colon \CC[s,t_1,\dots,t_N] \to \Stetig_\mathrm{rt}(\RR^N)$, $\pi \mapsto [\pi]_{\mathrm{rt}}$ be the unital $^*$\=/homomorphism
  that maps $s$ to $(\Unit_{\Stetig_\mathrm{rt}(\RR^N)}+\pr_1^2 + \dots + \pr_N^2)^{-1}$ and $t_n$ to $\pr_n$ for all $n\in \{1,\dots,N\}$,
  then $[\argument]_{\mathrm{rt}}$ maps surjectively onto $\Stetig_{\mathrm{rt}}(\RR^N)$
  by definition of $\Stetig_{\mathrm{rt}}(\RR^N)$.

  If
  a polynomial $\pi \in \CC[s,t_1,\dots,t_N]_\Hermitian$ fulfils $[\pi]_{\mathrm{rt}} \in \Stetig_\mathrm{rt}(\RR^N)^+_\Hermitian$,
  then $[\pi]_{\mathrm{rt}}$ is pointwise positive on $\RR^N$ by definition of the order on $\Stetig_\mathrm{rt}(\RR^N)_\Hermitian$,
  so $\pi\big( (1+x_1^2 + \dots + x_N^2)^{-1}, x_1, \dots,x_N \big) = [\pi]_{\mathrm{rt}}(x_1,\dots,x_N) \ge 0$ for all $x\in \RR^N$,
  and Lemma~\ref{lemma:rationalcalculus} shows that $\pi\big( (\Unit_A+a_1^2 + \dots + a_N^2)^{-1}, a_1, \dots,a_N \big) \in \mathcal{A}_\Hermitian^+$.
  Especially if $[\pi]_{\mathrm{rt}} = 0$, then $\pi\big( (\Unit_A+a_1^2 + \dots + a_N^2)^{-1}, a_1, \dots,a_N \big) \in \mathcal{A}_\Hermitian^+ \cap (-\mathcal{A}_\Hermitian^+) = \{0\}$.
  More generally, if $[\pi]_{\mathrm{rt}} = 0$ for any (not necessarily Hermitian) $\pi \in \CC[s,t_1,\dots,t_N]$, then
  it follows from
  $[\RE(\pi)]_{\mathrm{rt}} = 0 = [\IM(\pi)]_{\mathrm{rt}}$ that 
  $\pi\big( (\Unit_A+a_1^2 + \dots + a_N^2)^{-1}, a_1, \dots,a_N \big) = 0$. All of this together shows that the map
  $\Phi_\mathrm{rt} \colon \Stetig_{\mathrm{rt}}(\RR^N) \to \mathcal{A}$,
  \begin{align*}
    [\pi]_{\mathrm{rt}} \mapsto \Phi_\mathrm{rt}\big( [\pi]_{\mathrm{rt}} \big) \coloneqq \pi \big( (\Unit_A+a_1^2 + \dots + a_N^2)^{-1}, a_1, \dots,a_N \big)
  \end{align*}
  is a well-defined positive unital $^*$\=/homomorphism and one has $\Phi_\mathrm{rt}(\pr_n) =  \Phi_\mathrm{rt}\big( [t_n]_{\mathrm{rt}} \big) = a_n$
  for all $n\in \{1,\dots,N\}$ and 
  $\Phi_\mathrm{rt}\big( (\Unit_{\Stetig_\mathrm{rt}(\RR^N)}+\pr_1^2 + \dots + \pr_N^2)^{-1} \big) = \Phi_\mathrm{rt}\big( [s]_{\mathrm{rt}} \big) = (\Unit_A+a_1^2 + \dots + a_N^2)^{-1}$.
  
  As $\Stetig_{\mathrm{rt}}(\RR^N)$ is generated as a unital $^*$\=/algebra by $\pr_1, \dots,\pr_N$ and $(\Unit_{\Stetig_\mathrm{rt}(\RR^N)}+\pr_1^2 + \dots + \pr_N^2)^{-1}$,
  a unital $^*$\=/homomorphism $\Phi_{\mathrm{rt}} \colon \Stetig_\mathrm{rt}(\RR^N) \to \mathcal{A}$
  is uniquely determined by fixing $\Phi_{\mathrm{rt}}(\pr_1), \dots, \Phi_{\mathrm{rt}}(\pr_N)$ because the identity
  $\Phi_{\mathrm{rt}} \big( (\Unit_{\Stetig_\mathrm{rt}(\RR^N)}+\pr_1^2 + \dots + \pr_N^2)^{-1} \big) =
  \big(\Unit_A + \Phi_\mathrm{rt}(\pr_1)^2 + \dots + \Phi_\mathrm{rt}(\pr_N)^2\big){}^{-1}$ necessarily holds. Moreover, as $(\Unit_A+a_1^2 + \dots + a_N^2)^{-1} \in \{a_1,\dots,a_N\}''$ and $a_n \in  \{a_1,\dots,a_N\}''$ for all $n\in \{1,\dots,N\}$
  it also follows that $\Phi_{\mathrm{rt}}(r) \in \{a_1,\dots,a_N\}''$ for all $r\in \Stetig_\mathrm{rt}(\RR^N)$.
\end{proof}
This ``rational calculus'' can now be extended to continuous uniformly bounded functions whose limit at $\infty$ exists:
\begin{definition} \label{definition:sphericalcalculusDomain}
  For $N\in \NN$ let $\SS^N$ be the one-point compactification of $\RR^N$, i.e.~$\SS^N \coloneqq \RR^N \cup \{\infty\}$ as a set
  and a subset $U$ of $\SS^N$ is open if and only if either $U$ is an open subset of $\RR^N$ or if 
  $\SS^N \backslash U$ is a compact subset of $\RR^N$. Moreover, define
  \begin{align}
    \Stetig_\SS(\RR^N) \coloneqq \set[\big]{f\at{\RR^N}}{f \in \Stetig(\SS^N)}
  \end{align}
  where $\argument\at{\RR^N}$ denotes restriction of a function defined on $\SS^N$ to $\RR^N$.
\end{definition}
Some important properties of $\Stetig_\SS(\RR^N)$ are:
\begin{lemma} \label{lemma:sphericalcalculus}
  Given $N\in \NN$, then $\Stetig_\SS(\RR^N)$ is a closed unital $^*$\=/subalgebra of $\Stetig(\RR^N)^\bd$.
  If some function $f\in \Stetig(\RR^N)$ vanishes at $\infty$, i.e.~if it has the property that for every $\epsilon \in {]0,\infty[}$
  there is a compact subset $K$ of $\RR^N$ such that $\abs{f(x)} < \epsilon$ holds for all $x\in \RR^N \backslash K$,
  then $f\in\Stetig_\SS(\RR^N)$; especially $r_0 \coloneqq (\Unit+\pr_1^2 + \dots + \pr_N^2)^{-1} \in \Stetig_\SS(\RR^N)$
  and $r_n \coloneqq \pr_n (\Unit+\pr_1^2 + \dots + \pr_N^2)^{-1} \in \Stetig_\SS(\RR^N)$ for all $n\in\{1,\dots,N\}$.
  Moreover, $\Stetig_\SS(\RR^N)$ is the $C^*$\=/subalgebra of $\Stetig(\RR^N)^\bd$ that is generated by $\{r_0,r_1,\dots,r_N\}$
  and the unital $^*$\=/subalgebra $\Stetig_\SS(\RR^N) \cap \Stetig_\mathrm{rt}(\RR^N)$ of $\Stetig_\SS(\RR^N)$
  is dense in $\Stetig_\SS(\RR^N)$.
\end{lemma}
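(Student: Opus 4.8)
The plan is to identify $\Stetig_{\SS}(\RR^N)$ with $\Stetig(\SS^N)$ via restriction and to read off all the claimed properties from this identification together with the Stone--Weierstra\ss{} theorem on $\SS^N$. Since $\RR^N$ is locally compact Hausdorff, its one-point compactification $\SS^N$ is compact Hausdorff and contains $\RR^N$ as a dense open subset; hence the restriction map $\Stetig(\SS^N) \ni f \mapsto f\at{\RR^N}$ is an injective unital $^*$\=/homomorphism (injective by density of $\RR^N$), and by density of $\RR^N$ and continuity one has $\seminorm{\infty}{f\at{\RR^N}} = \sup_{x\in\RR^N}\abs{f(x)} = \sup_{y\in\SS^N}\abs{f(y)}$ for all $f\in\Stetig(\SS^N)$, so it is even isometric, with image $\Stetig_{\SS}(\RR^N)$ by definition. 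As the isometric image of the $C^*$\=/algebra $\Stetig(\SS^N)$, the space $\Stetig_{\SS}(\RR^N)$ is a complete, hence closed, unital $^*$\=/subalgebra of $\Stetig(\RR^N)^\bd$. For the functions vanishing at $\infty$ I would argue as follows: given such an $f\in\Stetig(\RR^N)$, extend it to $\hat f\colon\SS^N\to\CC$ by setting $\hat f(\infty)\coloneqq 0$; continuity of $\hat f$ at points of $\RR^N$ is automatic since $\RR^N$ is open in $\SS^N$, and continuity at $\infty$ is precisely the vanishing condition rephrased in terms of the neighbourhood basis $\set[\big]{\SS^N\backslash K}{K\subseteq\RR^N\textup{ compact}}$ of $\infty$. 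Thus $f = \hat f\at{\RR^N}\in\Stetig_{\SS}(\RR^N)$. Applying this to $r_0$ and to $r_1,\dots,r_N$, all of which tend to $0$ as $x_1^2+\dots+x_N^2\to\infty$ (a suitable closed ball serving as the compact set for each $\epsilon$), yields $r_0,r_1,\dots,r_N\in\Stetig_{\SS}(\RR^N)$.

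For the identification $\Stetig_{\SS}(\RR^N) = \genCstar{\{r_0,\dots,r_N\}}$ I would apply the Stone--Weierstra\ss{} theorem on the compact Hausdorff space $\SS^N$ to the unital $^*$\=/subalgebra of $\Stetig(\SS^N)$ generated by the continuous extensions $\hat r_0,\dots,\hat r_N$. The one point that needs checking is that these functions separate the points of $\SS^N$: $\hat r_0$ vanishes at $\infty$ and is strictly positive on $\RR^N$, hence separates $\infty$ from every finite point; and for distinct $x,y\in\RR^N$, either $x_1^2+\dots+x_N^2 \neq y_1^2+\dots+y_N^2$, in which case $\hat r_0(x) \neq \hat r_0(y)$, or these sums coincide, in which case $r_n(x) = x_n(1+x_1^2+\dots+x_N^2)^{-1}$ and $r_n(y) = y_n(1+y_1^2+\dots+y_N^2)^{-1}$ can agree for all $n\in\{1,\dots,N\}$ only if $x=y$. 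By Stone--Weierstra\ss{} the closure of that unital $^*$\=/subalgebra is all of $\Stetig(\SS^N)$; transporting this through the isometric $^*$\=/isomorphism of the first paragraph shows that the unital $^*$\=/subalgebra of $\Stetig(\RR^N)^\bd$ generated by $\{r_0,\dots,r_N\}$ is dense in $\Stetig_{\SS}(\RR^N)$ with respect to $\seminorm{\infty}{\argument}$, and since $\Stetig_{\SS}(\RR^N)$ is closed in $\Stetig(\RR^N)^\bd$ this gives $\genCstar{\{r_0,\dots,r_N\}} = \Stetig_{\SS}(\RR^N)$.

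Finally, $\Stetig_{\SS}(\RR^N)\cap\Stetig_{\mathrm{rt}}(\RR^N)$ is an intersection of two unital $^*$\=/subalgebras of $\Stetig(\RR^N)$, hence itself a unital $^*$\=/subalgebra, and it is contained in $\Stetig_{\SS}(\RR^N)$. It contains $\Unit$ and each $r_n = \pr_n(\Unit+\pr_1^2+\dots+\pr_N^2)^{-1}$ as well as $r_0 = (\Unit+\pr_1^2+\dots+\pr_N^2)^{-1}$, because these lie in $\Stetig_{\mathrm{rt}}(\RR^N)$ by its very definition and in $\Stetig_{\SS}(\RR^N)$ by the first paragraph; so it contains the whole unital $^*$\=/subalgebra generated by $\{r_0,\dots,r_N\}$, which is dense in $\Stetig_{\SS}(\RR^N)$ by the previous step, and the density claim follows. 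The only step that is not routine bookkeeping is the point-separation verification that feeds the Stone--Weierstra\ss{} theorem; the main thing to be attentive to is the interplay between the one-point-compactification topology and the restriction isomorphism $\Stetig(\SS^N)\cong\Stetig_{\SS}(\RR^N)$ — in particular that density of a unital $^*$\=/subalgebra is preserved under this isometry and is then upgraded to equality by closedness of $\Stetig_{\SS}(\RR^N)$.
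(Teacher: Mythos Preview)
Your proof is correct and follows essentially the same route as the paper: identify $\Stetig_\SS(\RR^N)$ with $\Stetig(\SS^N)$ via the isometric restriction map, extend functions vanishing at infinity by $0$, and apply Stone--Weierstra\ss{} on $\SS^N$ to the extensions $\hat r_0,\dots,\hat r_N$. Your point-separation argument is in fact tidier than the paper's, which asserts that $\pr_n(x)\neq\pr_n(y)$ alone forces $r_n(x)\neq r_n(y)$ (this fails, e.g.\ for $N=1$, $x=2$, $y=1/2$); your case distinction on whether the norms agree correctly shows that the full family $\{r_0,\dots,r_N\}$ separates finite points.
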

\begin{proof}
  As the restriction map $\argument\at{\RR^N} \colon \Stetig(\SS^N) \to \Stetig(\RR^N)$ is a positive unital $^*$\=/homomorphism,
  its image $\Stetig_\SS(\RR^N)$ is a unital $^*$\=/subalgebra of $\Stetig(\RR^N)$. As $\Stetig(\SS^N)^\bd = \Stetig(\SS^N)$ due 
  to the compactness of $\SS^N$, it even follows that $\Stetig_\SS(\RR^N) \subseteq \Stetig(\RR^N)^\bd$ by positivity of $\argument\at{\RR^N}$.
  Moreover, this positive unital $^*$\=/homomorphism $\argument\at{\RR^N}$ is injective and even an order embedding because 
  $\RR^N$ is dense in $\SS^N$. This implies that the restriction map is isometric, i.e.~$\seminorm{\infty}{f\at{\RR^N}} = \seminorm{\infty}{f}$
  for all $f\in \Stetig(\SS^N)$, and from completeness of $\Stetig(\SS^N)$ it now follows that $\Stetig_\SS(\RR^N)$
  is closed in $\Stetig(\RR^N)^\bd$.
  
  If some function $f\in \Stetig(\RR^N)^\bd$ has the property that for every $\epsilon \in {]0,\infty[}$
  there is a compact subset $K$ of $\RR^N$ such that $\abs{f(x)} \le \epsilon$ holds for all $x\in \RR^N \backslash K$,
  then the function $f^\mathrm{ext} \colon \SS^N \to \CC$ that is defined as $f^\mathrm{ext}(x) \coloneqq f(x)$ for all
  $x\in \RR^N$ and $f^\mathrm{ext}(\infty) \coloneqq 0$ is continuous, and so $f = f^\mathrm{ext}\at{\RR^N} \in \Stetig_\SS(\RR^N)$.
  It is easy to check that this condition is fulfilled by the functions $r_n$ with $n\in\{0,\dots,N\}$ so that
  one can construct such extensions $r_n^{\mathrm{ext}} \in \Stetig(\SS^N)$ for which $r_n = r_n^{\mathrm{ext}} \at{\RR^N} \in \Stetig_\SS(\RR^N)$.
  
  Finally, note that these extended functions $r_n^{\mathrm{ext}}$ form a point-separating set of functions on $\SS^N$,
  i.e.~for all $x,y\in \SS^N$ with $x\neq y$ there is an $n\in \{0,1,\dots,N\}$ such that $r_n^{\mathrm{ext}}(x) \neq r_n^{\mathrm{ext}}(y)$:
  Indeed, if both $x$ and $y$ are elements of $\RR^N$ but $x\neq y$, then there is an $n\in \{1,\dots,N\}$ such that $\pr_n(x) \neq \pr_n(y)$
  and therefore $r_n^{\mathrm{ext}}(x) \neq r_n^{\mathrm{ext}}(y)$; if exactly one of $x$ and $y$ equals $\infty$, say $x=\infty$ and $y\in \RR^N$,
  then $r_0^{\mathrm{ext}}(x) = 0 \neq r_0(y) = r_0^{\mathrm{ext}}(y)$. Thus by the Stone-Weierstraß theorem, the unital $^*$\=/subalgebra
  of $\Stetig(\SS^N)$ that is generated by these extended functions $\{r_0^{\mathrm{ext}},r_1^{\mathrm{ext}},\dots,r_N^{\mathrm{ext}}\}$ is
  dense in $\Stetig(\SS^N)$. From the continuity of the restriction map $\argument\at{\RR^N}$ it follows
  that the unital $^*$\=/subalgebra of $\Stetig_\SS(\RR^N)$ that is generated by the functions $\{r_0,r_1,\dots,r_N\}$ is dense in $\Stetig_\SS(\RR^N)$,
  so $\genCstar{\{r_0,r_1,\dots,r_N\}} = \Stetig_\SS(\RR^N)$ because $\Stetig_\SS(\RR^N)$ is also closed in $\Stetig(\RR^N)^\bd$.
  Moreover, as $r_n \in \Stetig_\SS(\RR^N) \cap \Stetig_\mathrm{rt}(\RR^N)$ for all $n\in \{0,1,\dots,N\}$, this also means that
  the unital $^*$\=/subalgebra $\Stetig_\SS(\RR^N) \cap \Stetig_\mathrm{rt}(\RR^N)$ of $\Stetig_\SS(\RR^N)$
  is dense in $\Stetig_\SS(\RR^N)$.
\end{proof}
Recall that every positive unital $^*$\=/homomorphism between Archimedean ordered $^*$\=/algebras is automatically continuous, and that \Sus\=/algebras
by definition are complete and contain a multiplicative inverse for every element of the form $\Unit+a_1^2 + \dots+a_N^2$ with Hermitian
elements $a_1,\dots,a_N$ and $N\in \NN$. The above Lemma~\ref{lemma:sphericalcalculus} therefore shows that on \Sus\=/algebras, one can continuously extend 
the rational calculus from Proposition~\ref{proposition:rationalcalculus} to $\Stetig_\SS(\RR^N)$:
\begin{definition} \label{definition:sphericalcalculus}
  Let $\mathcal{A}$ be a \Sus\=/algebra, $N\in \NN$ and $a_1,\dots,a_N\in \mathcal{A}_\Hermitian$ pairwise commuting, then define
  $\Phi_\SS \colon \Stetig_\SS(\RR^N) \to \mathcal{A}$ as the continuous extension of the restriction of the positive unital $^*$\=/homomorphism
  $\Phi_{\mathrm{rt}} \colon \Stetig_{\mathrm{rt}} (\RR^N) \to \mathcal{A}$ from Proposition~\ref{proposition:rationalcalculus}
  to $\Stetig_\SS(\RR^N) \cap \Stetig_\mathrm{rt}(\RR^N)$.
\end{definition}
\begin{proposition} \label{proposition:sphericalcalculus}
  Let $\mathcal{A}$ be a \Sus\=/algebra, $N\in \NN$ and $a_1,\dots,a_N\in \mathcal{A}_\Hermitian$ pairwise commuting, then the map 
  $\Phi_\SS \colon \Stetig_\SS(\RR^N) \to \mathcal{A}$ from the previous Definition~\ref{definition:sphericalcalculus}
  is a positive unital $^*$\=/homomorphism and $\Phi_\SS(f) \in \{a_1,\dots,a_N\}''$ for all $f\in\Stetig_\SS(\RR^N)$.
  The image of $\Stetig_\SS(\RR^N)$ under $\Phi_\SS$ is the $C^*$\=/subalgebra of
  $\mathcal{A}^\bd$ that is generated by $\{ p^{-1}, a_1 p^{-1}, \dots, a_N p^{-1} \}$,
  where $p \coloneqq \Unit_\mathcal{A} + a_1^2 + \dots + a_N^2 \in \mathcal{A}$.
\end{proposition}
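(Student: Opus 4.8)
The plan is to push $\Phi_{\mathrm{rt}}$ through the dense subalgebra supplied by Lemma~\ref{lemma:sphericalcalculus}, extend it by continuity, and then read off the three assertions of the proposition.

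First I would observe that the restriction of $\Phi_{\mathrm{rt}}$ to $\Stetig_\SS(\RR^N)\cap\Stetig_{\mathrm{rt}}(\RR^N)$ is contractive with respect to $\seminorm{\infty}{\argument}$: if $f$ lies in this subalgebra with $\seminorm{\infty}{f}=\rho$, then $\rho^2\Unit-f^*f$ is pointwise positive, hence positive in $\Stetig_{\mathrm{rt}}(\RR^N)$, so by positivity of $\Phi_{\mathrm{rt}}$ also $\rho^2\Unit-\Phi_{\mathrm{rt}}(f)^*\Phi_{\mathrm{rt}}(f)\ge 0$, i.e.~$\Phi_{\mathrm{rt}}(f)\in\mathcal{A}^\bd$ with $\seminorm{\infty}{\Phi_{\mathrm{rt}}(f)}\le\rho$. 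Since $\mathcal{A}^\bd$ is a complete $C^*$\=/algebra and the subalgebra is dense, this restriction has a unique $\seminorm{\infty}{\argument}$\=/continuous extension $\Phi_\SS\colon\Stetig_\SS(\RR^N)\to\mathcal{A}^\bd$, which is exactly the map from Definition~\ref{definition:sphericalcalculus}. The unital $^*$\=/homomorphism identities then pass to the limit because the algebraic operations are $\seminorm{\infty}{\argument}$\=/continuous on $\mathcal{A}^\bd$, and $\Phi_\SS$ is positive since for $f\in\Stetig_\SS(\RR^N)^+_\Hermitian$ the square root $\sqrt f$ again lies in the $C^*$\=/algebra $\Stetig_\SS(\RR^N)$, so that $\Phi_\SS(f)=\Phi_\SS(\sqrt f)^2\in\mathcal{A}^+_\Hermitian$.

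For the image I would invoke $\Stetig_\SS(\RR^N)=\genCstar{\{r_0,r_1,\dots,r_N\}}$ from Lemma~\ref{lemma:sphericalcalculus}, together with $\Phi_\SS(r_0)=\Phi_{\mathrm{rt}}(r_0)=p^{-1}$ and $\Phi_\SS(r_n)=\Phi_{\mathrm{rt}}(r_n)=a_np^{-1}$. As $\Phi_\SS$ is a $\seminorm{\infty}{\argument}$\=/continuous unital $^*$\=/homomorphism, it maps the unital $^*$\=/subalgebra generated by $r_0,\dots,r_N$ onto the one generated by $p^{-1},a_1p^{-1},\dots,a_Np^{-1}$, whence $\Phi_\SS\big(\Stetig_\SS(\RR^N)\big)\subseteq\genCstar{\{p^{-1},a_1p^{-1},\dots,a_Np^{-1}\}}$ by taking closures. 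For the reverse inclusion I would use that $\Phi_\SS$ is a $^*$\=/homomorphism between the $C^*$\=/algebras $\Stetig_\SS(\RR^N)$ and $\mathcal{A}^\bd$: it factors as an isometric $^*$\=/isomorphism of $\Stetig_\SS(\RR^N)/\ker\Phi_\SS$ onto its image, so the image is complete, hence a closed unital $^*$\=/subalgebra of $\mathcal{A}^\bd$ that, containing the generators, contains all of $\genCstar{\{p^{-1},a_1p^{-1},\dots,a_Np^{-1}\}}$.

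It remains to show $\Phi_\SS(f)\in\{a_1,\dots,a_N\}''$ for all $f$, which by the previous step amounts to $\genCstar{\{p^{-1},a_1p^{-1},\dots,a_Np^{-1}\}}\subseteq\{a_1,\dots,a_N\}''$; the generators lie in $\{a_1,\dots,a_N\}''$ by Proposition~\ref{proposition:rationalcalculus}. The main obstacle here is that multiplication in $\mathcal{A}$ is not $\metric_\infty$\=/continuous in general, so one cannot naively pass to limits inside the bicommutant. I would circumvent this by proving that $\{a_1,\dots,a_N\}''$ is nevertheless $\metric_\infty$\=/closed, for then it absorbs the $\seminorm{\infty}{\argument}$\=/closure of the unital $^*$\=/subalgebra generated by those bounded elements. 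Writing $\{a_1,\dots,a_N\}''=\big(\{a_1,\dots,a_N\}'\big)'$ and using that a $^*$\=/stable set has the same commutant as its set of Hermitian elements (since $c=\RE(c)+\I\,\IM(c)$), it suffices to check that $\{d\}'$ is $\metric_\infty$\=/closed for every $d\in\mathcal{A}_\Hermitian$. For such $d$ the element $\Unit+d^2$ is coercive, hence invertible in the \Sus\=/algebra $\mathcal{A}$, and both $e\coloneqq(\Unit+d^2)^{-1}$ and $de$ are uniformly bounded, as $0\le e\le\Unit$ and $(de)^*(de)=d^2(\Unit+d^2)^{-2}\le(\Unit+d^2)(\Unit+d^2)^{-2}=e\le\Unit$; moreover $\{d\}'=\{e,de\}'$, which one verifies from $de=ed$ and $de\,(\Unit+d^2)=d$. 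Since the commutant of finitely many uniformly bounded elements is $\metric_\infty$\=/closed — left and right multiplication by an element of $\mathcal{A}^\bd$ being $\metric_\infty$\=/continuous — this yields the claim, and the proof is complete.
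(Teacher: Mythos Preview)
Your proof is correct and reaches the same conclusion as the paper, but two of the three pieces are argued along genuinely different lines.

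For the identification of the image, the paper does not invoke the standard $C^*$\=/algebraic fact that $^*$\=/homomorphisms between $C^*$\=/algebras have closed range. Instead it constructs, for a given $b\in\genCstar{\{p^{-1},a_1p^{-1},\dots,a_Np^{-1}\}}$, an explicit Cauchy sequence $(g_k)_{k\in\NN}$ in $\Stetig_\SS(\RR^N)$ with $\Phi_\SS(g_k)\to b$, by lifting successive differences and truncating them with the functions $\gamma_\rho$ from Corollary~\ref{corollary:niceMorphisms}. Your quotient argument is shorter and conceptually cleaner; the paper's construction is more hands-on and keeps everything internal to the machinery already developed in Section~\ref{sec:propersus}.

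For the bicommutant statement, the paper simply cites \cite[Prop.~5]{schoetz:EquivalenceOrderAlgebraicStructure} to conclude that $\{a_1,\dots,a_N\}''$ is closed in $\mathcal{A}$. You instead re-prove this closedness directly via the observation that $\{d\}'=\{(\Unit+d^2)^{-1},\,d(\Unit+d^2)^{-1}\}'$ reduces the commutant of an arbitrary Hermitian element to that of two uniformly bounded ones, whence closedness follows from $\metric_\infty$\=/continuity of multiplication by bounded elements. This is a nice self-contained substitute for the external reference; it is in fact the same mechanism that underlies the cited proposition.

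The first part of your argument---contractivity of $\Phi_{\mathrm{rt}}$ on the dense subalgebra, the continuous extension, and positivity via square roots---matches the paper's reasoning.
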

\begin{proof}
  $\Phi_\SS$ is defined as the $\seminorm{\infty}{\argument}$-continuous extension of a $\seminorm{\infty}{\argument}$-continuous
  unital $^*$\=/homomorphism, so continuity of multiplication and $^*$\=/involution with respect to the norm $\seminorm{\infty}{\argument}$
  guarantee that $\Phi_\SS$ is again a unital $^*$\=/homomorphism. It is automatically positive: If $f\at{\RR^N}$ is pointwise
  positive for some $f\in \Stetig(\SS^N)$, then $f$ is also pointwise positive because $\RR^N$ is dense in $\SS^N$.
  Because of this, the pointwise square root $\sqrt{f}\in \Stetig(\SS^N)^+_\Hermitian$ is well-defined and $\Phi_\SS(f\at{\RR^N}) = \Phi_\SS(\sqrt{f}\at{\RR^N})^2 \ge 0$.
  
  Moreover, as the image of $\Phi_\mathrm{rt}$ is a subset of the bicommutant $\{a_1,\dots,a_N\}''$ by Proposition~\ref{proposition:rationalcalculus}
  and as $\{a_1,\dots,a_N\}''$ is closed in
  $\mathcal{A}$ by \cite[Prop.~5]{schoetz:EquivalenceOrderAlgebraicStructure}, the image of $\Phi_\SS$ also lies in $\{a_1,\dots,a_N\}''$.

  Finally, the image of $\Stetig_\SS(\RR^N)$ under $\Phi_\SS$ is $\genCstar{ \{ p^{-1}, a_1 p^{-1}, \dots, a_N p^{-1} \} }$:
  On the one hand, $\Stetig_\SS(\RR^N)$ is the $C^*$\=/subalgebra of $\Stetig(\RR^N)^\bd$ that is generated by 
  $r_0 = ( \Unit_{\Stetig(\RR^N)} + \pr_1^2 + \dots + \pr_N^2 )^{-1}$ and all $r_n = \pr_n r_0$ with $n\in \{1,\dots,N\}$
  by Lemma~\ref{lemma:sphericalcalculus}, so its image under the $\seminorm{\infty}{\argument}$-continuous map $\Phi_\SS$ is 
  a subset of $\genCstar{ \{ p^{-1}, a_1 p^{-1}, \dots, a_N p^{-1} \} }$. On the other hand, let
  $b \in \genCstar{ \{ p^{-1}, a_1 p^{-1}, \dots, a_N p^{-1} \} }$ be given. Then there exists a convergent sequence
  $(c_k)_{k\in \NN}$ in the unital $^*$\=/subalgebra of $\mathcal{A}^\bd$ that is generated by $\{ p^{-1}, a_1 p^{-1}, \dots, a_N p^{-1} \}$
  with limit $\lim_{k\to \infty} c_k = b$. One can even arrange that $\seminorm{\infty}{b-c_{k}} \le 2^{-(k+1)}$ for all $k\in \NN$,
  hence $\seminorm{\infty}{c_{k+1}-c_k} \le 2^{-k}$. This allows to construct a Cauchy-sequence $(g_k)_{k\in \NN}$ in $\Stetig_\SS(\RR^N)$
  fulfilling $\Phi_\SS(g_k) = c_k$ for all $k\in \NN$ as follows: Let $g_1 \in \Stetig_\SS(\RR^N)$ be any preimage of $c_1$ under
  $\Phi_\SS$, which exists by construction of the sequence $(c_k)_{k\in \NN}$. If $g_1, \dots, g_K$ have been defined for some $K\in \NN$,
  let $g_{K+1} \coloneqq g_K + ( \gamma_{2^{-K}} \circ \delta_K )\at{\RR^N}$ with $\delta_K \in \Stetig(\SS^N)$ a preimage of 
  $c_{K+1} - c_K$ under $\Phi_\SS \circ \argument\at{\RR^N} \colon \Stetig(\SS^N) \to \mathcal{A}$ and $\gamma_{2^{-K}} \colon \CC\to \CC$
  like in Corollary~\ref{corollary:niceMorphisms}. Then $\Phi_\SS\big( g_K + ( \gamma_{2^{-K}} \circ \delta_K )\at{\RR^N} \big) = c_K + \Phi_\SS(\delta_K \at{\RR^N}) = c_{K+1}$
  by Corollary~\ref{corollary:niceMorphisms} and because $\seminorm{\infty}{\Phi_\SS(\delta_K \at{\RR^N})} = \seminorm{\infty}{c_{K+1}-c_K} \le 2^{-K}$,
  and one also finds that the estimate
  $\seminorm{\infty}{g_{K+1} - g_K} = \seminorm{\infty}{( \gamma_{2^{-K}} \circ \delta_K )\at{\RR^N} } \le 2^{-K}$ holds because $\abs{\gamma_{2^{-K}}(z)} \le 2^{-K}$
  for all $z\in \CC$.
  The resulting sequence $(g_k)_{k\in \NN}$ therefore is indeed a Cauchy sequence and its limit $f \coloneqq \lim_{k\to\infty} g_k \in \Stetig_\SS(\RR^N)$
  fulfils $\Phi_\SS(f) = \lim_{k\to \infty} \Phi_\SS(g_k) = \lim_{k\to \infty} c_k = b$.
\end{proof}
However, $\Phi_\SS$ does not describe a continuous calculus simply because $\pr_n \notin \Stetig_\SS(\RR^N)$
for $n\in \{1,\dots,N\}$. This will be fixed in the next step:
\begin{proposition} \label{proposition:bigcalculus}
  Let $\mathcal{A}$ be a \Sus\=/algebra, $N\in \NN$ and $a_1,\dots,a_N\in \mathcal{A}_\Hermitian$ pairwise commuting and write
  $p \coloneqq \Unit + \pr_1^2 + \dots + \pr_N^2 \in \Stetig(\RR^ N)$. Construct $\Stetig_\SS(\RR^N)$ and $\Phi_\SS \colon \Stetig_\SS(\RR^N) \to \mathcal{A}$
  like in Definitions~\ref{definition:sphericalcalculusDomain} and \ref{definition:sphericalcalculus}.
  Then for every $f\in \Stetig(\RR^N)$ the function $f^*f + p$ has a pointwise inverse and
  $(f^*f+p)^{-1} \in \Stetig_\SS(\RR^N)$ and $f (f^*f +p)^{-1} \in \Stetig_\SS(\RR^N)$ hold. Moreover:
  \begin{enumerate}
    \item \label{item:bigcalculus:I}
      The set
      \begin{align}
        \mathcal{I}_\RR
        \coloneqq
        \set[\big]{
          f \in \Stetig(\RR^N)
        }{
          \Phi_\SS\big( (f^*f+p)^{-1} \big) \textup{ is invertible in }\mathcal{A} 
        }
      \end{align}
      has the following property: If for some $f\in \Stetig(\RR^N)$ there exists an $h \in \Stetig_\SS(\RR^N)^+_\Hermitian$ such that
      $\Phi_\SS(h)$ has a multiplicative inverse $\Phi_\SS(h)^{-1} \in \mathcal{A}$ and such that $(f^*f+p)^{-1} \ge h$ holds,
      then $f\in\mathcal{I}_\RR$. This especially shows that $\mathcal{I}_\RR$ is an intermediate $^*$\=/subalgebra of 
      $\Stetig(\RR^N)$ and $\Stetig_{\mathrm{rt}}(\RR^N) \subseteq \mathcal{I}_\RR$. Moreover, $\mathcal{I}_\RR$
      is a commutative \Sus\=/algebra of finite type with generators $\pr_1,\dots,\pr_N$.
    \item \label{item:bigcalculus:Phi}
      The map $\Phi_\RR \colon \mathcal{I}_\RR \to \mathcal{A}$,
      \begin{align}
        f \mapsto \Phi_\RR(f) \coloneqq \Phi_\SS\big( f (f^*f +p)^{-1} \big) \Phi_\SS\big( (f^*f +p)^{-1} \big)^{-1}
      \end{align}
      has the following property: If for some $f\in \mathcal{I}_\RR$ there exists an $h \in \Stetig_\SS(\RR^N)$ such that
      $\Phi_\SS(h)$ has a multiplicative inverse $\Phi_\SS(h)^{-1} \in \mathcal{A}$ and such that $fh \in \Stetig_\SS(\RR^N)$ is fulfilled,
      then the identity $\Phi_\RR(f) =  \Phi_\SS(fh)\Phi_\SS(h)^{-1}$ holds. This especially shows that $\Phi_\RR$ is a unital 
      $^*$\=/homomorphism that extends $\Phi_\SS$ and $\Phi_{\mathrm{rt}}$, i.e.~$\Phi_\RR(f) = \Phi_\SS(f)$
      for all $f\in \Stetig_\SS(\RR^N)$ and $\Phi_\RR(r) = \Phi_\mathrm{rt}(r)$ for all $r\in \Stetig_\mathrm{rt}(\RR^N)$.
      Moreover, one has $\Phi_\RR(f) \in \{a_1,\dots,a_N\}''$  for all $f\in\mathcal{I}_\RR$.
    \item \label{item:bigcalculus:calculus}
      The triple $\big(\RR^N, \mathcal{I}_\RR, \Phi_\RR\big)$ is a continuous calculus for $a_1,\dots,a_N$.
  \end{enumerate}
\end{proposition}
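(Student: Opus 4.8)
Here is how I would prove Proposition~\ref{proposition:bigcalculus}.

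\emph{Overall strategy.} Everything rests on two facts. First, for every $f\in\Stetig(\RR^N)$ one has $f^*f+p\ge p\ge\Unit$ pointwise, hence the pointwise estimates $0<(f^*f+p)^{-1}\le p^{-1}$ and $\abs{f\,(f^*f+p)^{-1}}\le\tfrac12 p^{-1/2}$; so both functions vanish at infinity and therefore lie in $\Stetig_\SS(\RR^N)$ by Lemma~\ref{lemma:sphericalcalculus} --- this is the preliminary claim. Second, by Proposition~\ref{proposition:sphericalcalculus} the image of $\Phi_\SS$ lies in the commutative \Sus\=/algebra $\{a_1,\dots,a_N\}''$, so all elements of $\mathcal{A}$ appearing below commute with one another, and $\Phi_\SS(p^{-1}) = (\Unit+a_1^2+\dots+a_N^2)^{-1}$ is invertible in $\mathcal{A}$ because $\mathcal{A}$ is a \Sus\=/algebra.

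\emph{Part \refitem{item:bigcalculus:I}.} I would first record the elementary fact that in any \Sus\=/algebra, if $0\le b\le c$ with $b$ invertible then $c$ is invertible (with $c^{-1}\le b^{-1}$): writing $b^{-1/2}$ for the square root of $b^{-1}$, the element $b^{-1/2}cb^{-1/2}\ge b^{-1/2}bb^{-1/2}=\Unit$ is coercive, hence invertible by symmetry and with inverse $\le\Unit$, and conjugating back gives the claim. The majorization property is then immediate: $(f^*f+p)^{-1}\ge h$ and positivity of $\Phi_\SS$ give $\Phi_\SS((f^*f+p)^{-1})\ge\Phi_\SS(h)$, and invertibility of $\Phi_\SS(h)$ forces invertibility of $\Phi_\SS((f^*f+p)^{-1})$, i.e.\ $f\in\mathcal{I}_\RR$. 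From here every structural statement follows by exhibiting, for the relevant $g$, a minorant $h\in\Stetig_\SS(\RR^N)^+_\Hermitian$ of $(g^*g+p)^{-1}$ whose $\Phi_\SS$\=/image is a positive scalar multiple of a product of invertible elements: closure under the $^*$\=/involution is trivial since $\Stetig(\RR^N)$ is commutative; for $\seminorm{\infty}{f}\le C$ one has $(f^*f+p)^{-1}\ge(C^2+1)^{-1}p^{-1}$, so $\Stetig(\RR^N)^\bd\subseteq\mathcal{I}_\RR$; for products, sums and scalar multiples of $f,g\in\mathcal{I}_\RR$ one uses $uv\ge p$ and $u+v\le2uv$ (where $u\coloneqq f^*f+p\ge\Unit$, $v\coloneqq g^*g+p\ge\Unit$) together with the parallelogram bound $(f+g)^*(f+g)\le2f^*f+2g^*g$ to minorise the relevant $(g^*g+p)^{-1}$ by a scalar multiple of $u^{-1}v^{-1}$, whose $\Phi_\SS$\=/image is a multiple of $\Phi_\SS(u^{-1})\Phi_\SS(v^{-1})$; and for $r=P/p^k\in\Stetig_{\mathrm{rt}}(\RR^N)$ a polynomial growth bound $P^*P\le Cp^d$ gives $(r^*r+p)^{-1}\ge(C+1)^{-1}p^{-m}$ for a suitable $m\in\NN$, so $\Stetig_{\mathrm{rt}}(\RR^N)\subseteq\mathcal{I}_\RR$. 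Thus $\mathcal{I}_\RR$ is an intermediate $^*$\=/subalgebra of $\Stetig(\RR^N)$ containing $\Stetig_{\mathrm{rt}}(\RR^N)$, hence a \Sus\=/algebra by Proposition~\ref{proposition:intermediateSus}; it is a proper \Sus\=/algebra of continuous functions on $\RR^N$ because $p\in(\mathcal{I}_\RR)^+_\Hermitian$ is a proper function, whence $p$ is proper in $\mathcal{I}_\RR$ by Proposition~\ref{proposition:properness}; and it is of finite type with generators $\pr_1,\dots,\pr_N$ since every $b\in\mathcal{I}_\RR$ equals $b_{\mathrm n}b_{\mathrm d}^{-1}$ with $b_{\mathrm d}\coloneqq(b^*b+p)^{-1}$ and $b_{\mathrm n}\coloneqq b\,(b^*b+p)^{-1}$, both lying in $\genCstar{\{p^{-1},\pr_1 p^{-1},\dots,\pr_N p^{-1}\}}=\Stetig_\SS(\RR^N)$ by Lemma~\ref{lemma:sphericalcalculus}, and $b_{\mathrm d}$ invertible in $\mathcal{I}_\RR$ because $b^*b+p\in\mathcal{I}_\RR$.

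\emph{Parts \refitem{item:bigcalculus:Phi} and \refitem{item:bigcalculus:calculus}.} For $f\in\mathcal{I}_\RR$ both $f u^{-1}$ and $u^{-1}$ (with $u\coloneqq f^*f+p$) lie in $\Stetig_\SS(\RR^N)$ and $\Phi_\SS(u^{-1})$ is invertible, so $\Phi_\RR(f)$ is well defined. The identity $\Phi_\RR(f)=\Phi_\SS(fh)\Phi_\SS(h)^{-1}$ is obtained by multiplying the definition of $\Phi_\RR(f)$ on the right by $\Phi_\SS(h)$ and using multiplicativity of $\Phi_\SS$ together with the facts that $fu^{-1}h$ and $(fh)u^{-1}$ coincide as functions in $\Stetig_\SS(\RR^N)$ and that $\Phi_\SS(h)$ commutes with $\Phi_\SS(u^{-1})^{-1}$ (both lying in $\{a_1,\dots,a_N\}''$). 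Granted this identity: taking $h=\Unit$ shows $\Phi_\RR$ extends $\Phi_\SS$; taking $h=p^{-m}$ with $m$ large enough that $r p^{-m}\in\Stetig_\SS(\RR^N)\cap\Stetig_{\mathrm{rt}}(\RR^N)$ (where $\Phi_\SS$ and $\Phi_{\mathrm{rt}}$ agree) shows $\Phi_\RR$ extends $\Phi_{\mathrm{rt}}$; linearity and $^*$\=/compatibility follow by choosing $h=u^{-1}$ respectively $h=u^{-1}v^{-1}$ and using commutativity; and multiplicativity $\Phi_\RR(fg)=\Phi_\RR(f)\Phi_\RR(g)$ follows by evaluating all three of $\Phi_\RR(f)$, $\Phi_\RR(g)$, $\Phi_\RR(fg)$ against the common $h=u^{-1}v^{-1}$ (with $v\coloneqq g^*g+p$) and computing inside $\{a_1,\dots,a_N\}''$; the containment $\Phi_\RR(f)\in\{a_1,\dots,a_N\}''$ is read off the definition. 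Finally, part \refitem{item:bigcalculus:calculus} merely collects these facts against Definition~\ref{definition:calculi}: $\mathcal{I}_\RR$ is an intermediate $^*$\=/subalgebra of $\Stetig(\RR^N)$ with $\pr_n\in\mathcal{I}_\RR$ by part \refitem{item:bigcalculus:I}, $\Phi_\RR$ is a unital $^*$\=/homomorphism by part \refitem{item:bigcalculus:Phi} which is automatically positive by Corollary~\ref{corollary:niceMorphisms} (or via $\Phi_\RR(g)=\Phi_\RR(\sqrt g)^2$ for $g\in(\mathcal{I}_\RR)^+_\Hermitian$), and $\Phi_\RR(\pr_n)=\Phi_{\mathrm{rt}}(\pr_n)=a_n$ because $\Phi_\RR$ extends $\Phi_{\mathrm{rt}}$ (Proposition~\ref{proposition:rationalcalculus}).

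\emph{Main obstacle.} The essential work is concentrated in part \refitem{item:bigcalculus:I}: the order-inversion fact ``$0\le b\le c$, $b$ invertible $\Rightarrow$ $c$ invertible'' genuinely uses symmetry and existence of square roots in the possibly unbounded \Sus\=/algebra $\mathcal{A}$, and for each algebra operation on $\mathcal{I}_\RR$ one must pin down exactly the right minorant in $\Stetig_\SS(\RR^N)$ with manifestly invertible $\Phi_\SS$\=/image. Once this scaffolding is in place, parts \refitem{item:bigcalculus:Phi} and \refitem{item:bigcalculus:calculus} become bookkeeping, provided one stays vigilant about membership in $\Stetig_\SS(\RR^N)$ (so that $\Phi_\SS$ applies and is multiplicative) and computes throughout inside the commutative bicommutant $\{a_1,\dots,a_N\}''$.
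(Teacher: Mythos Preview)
Your proof is correct and follows essentially the same architecture as the paper's: the preliminary vanishing-at-infinity estimates, the majorization criterion for membership in $\mathcal{I}_\RR$, the choice of minorants $h$ (scalar multiples of $p^{-1}$ for bounded functions and $(f^*f+p)^{-1}(g^*g+p)^{-1}$ for sums and products), and the use of the identity $\Phi_\RR(f)=\Phi_\SS(fh)\Phi_\SS(h)^{-1}$ to verify the homomorphism properties all match the paper's approach. The only noteworthy difference is in how you deduce invertibility of $\Phi_\SS((f^*f+p)^{-1})$ from $\Phi_\SS((f^*f+p)^{-1})\ge\Phi_\SS(h)$: you conjugate by $\Phi_\SS(h)^{-1/2}$ (which tacitly uses that positive invertible elements of a \Sus\=/algebra have positive inverses, hence square roots), whereas the paper first multiplies the pointwise inequality by $h$ to get $\Phi_\SS((f^*f+p)^{-1}h)\ge\Phi_\SS(h)^2$ and then conjugates by $\Phi_\SS(h)^{-1}$, avoiding square roots entirely; both are valid in the commutative bicommutant.
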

\begin{proof}
  Given $f\in \Stetig(\RR^N)$, then $f^*f+p \ge \Unit$, so $f^*f+p$ is invertible in $\Stetig(\RR^N)$.
  For every $\epsilon \in {]0,\infty[}$, the preimage $K_\epsilon \coloneqq p^{-1}([0,\epsilon^{-1}])$
  is a compact subset of $\RR^N$ (i.e.~$p$ is proper) and one finds that 
  $(f^*f+p)^{-1} \at{\RR^N \backslash K_\epsilon} < \epsilon \Unit$ and
  $\abs{ f } (f^*f+p)^{-1} \at{\RR^N \backslash K_{\epsilon}} < \sqrt{\epsilon} \Unit$
  (for the second estimate, consider the two cases $\abs{f(x)} \ge \epsilon^{-1/2}$ and $0 \le \abs{f(x)} \le \epsilon^{-1/2}$ separately).
  So $(f^*f+p)^{-1}, f (f^*f +p)^{-1} \in \Stetig_\SS(\RR^N)$ by Lemma~\ref{lemma:sphericalcalculus}.
  Recall also that $\Phi_\SS \colon \Stetig_\SS(\RR^N) \to \mathcal{A}$ is a positive unital $^*$\=/homomorphism
  by the previous Proposition~\ref{proposition:sphericalcalculus}.
  
  For part \refitem{item:bigcalculus:I} consider $f\in \Stetig(\RR^N)$ and $h\in \Stetig_\SS(\RR^N)^+_\Hermitian$ with the properties that $\Phi_\SS(h)$
  has a multiplicative inverse $\Phi_\SS(h)^{-1} \in \mathcal{A}$ and $(f^*f+p)^{-1} \ge h$. Then 
  $\Phi_\SS\big((f^*f+p)^{-1} h\big) \ge \Phi_\SS(h)^2$ because $(f^*f+p)^{-1} h \ge h^2$,
  so $\Phi_\SS(h)^{-1}\Phi_\SS\big((f^*f+p)^{-1} h\big) \Phi_\SS(h)^{-1}$ is coercive
  and consequently is invertible in $\mathcal{A}$ because $\mathcal{A}$ is symmetric by assumption.
  It follows that $\Phi_\SS\big((f^*f+p)^{-1} h\big)$ also is invertible in $\mathcal{A}$, thus also
  $\Phi_\SS\big((f^*f+p)^{-1}\big)$ with inverse
  $\Phi_\SS\big((f^*f+p)^{-1}\big){}^{-1} = \Phi_\SS(h)\Phi_\SS\big((f^*f+p)^{-1} h\big){}^{-1} = \Phi_\SS\big((f^*f+p)^{-1} h\big){}^{-1} \Phi_\SS(h)$.
  This shows that $f\in \mathcal{I}_\RR$.
  
  For example, as $\Phi_\SS( p^{-1}) = (\Unit+a_1^2+\dots+a_N^2)^{-1}$ by construction of $\Phi_\SS$,
  which is invertible in $\mathcal{A}$, one can choose $h \coloneqq \epsilon p^{-1}$ with $\epsilon \in {]0,\infty[}$.
  So $\Stetig(\RR^N)^\bd \subseteq \mathcal{I}_\RR$
  because $(f^*f+p)^{-1} \ge (1+\seminorm{\infty}{f}^2)^{-1} p^{-1}$ holds for all $f\in \Stetig(\RR^N)^\bd$,
  and also $\pr_n \in \mathcal{I}_\RR$ for all $n\in \{1,\dots,N\}$ because $(\pr_n^2 + p)^{-1} \ge \frac{1}{2} p^{-1}$.
  Similarly, given $f,g\in \mathcal{I}_\RR$, then one can choose $h \coloneqq \frac{1}{2} (f^*f+p)^{-1}(g^*g+p)^{-1} \in \Stetig_\SS(\RR^N)^+_\Hermitian$,
  for which $\Phi_\SS(h) = \frac{1}{2}\Phi_\SS\big((f^*f+p)^{-1}\big) \Phi_\SS\big((g^*g+p)^{-1}\big)$ indeed is invertible in $\mathcal{A}$. The estimates
  \begin{align*}
    (f+g)^*(f+g) + p &\le 2(f^*f + g^*g) + p \le 2 (f^*f + p) (g^*g + p)
  \shortintertext{and}
    (fg)^*(fg) + p &= f^*f g^*g + p \le (f^*f + p) (g^*g + p) 
  \end{align*}
  then show that $\big((f+g)^*(f+g) + p\big){}^{-1} \ge h$ and $\big((fg)^*(fg) + p\big){}^{-1} \ge h$, so $f+g,fg \in \mathcal{I}_\RR$.
  As $\mathcal{I}_\RR$ clearly is stable under the $^*$\=/involution of $\Stetig(\RR^N)$, i.e.~pointwise complex conjugation,
  we conclude that $\mathcal{I}_\RR$ is an intermediate $^*$\=/subalgebra of 
  $\Stetig(\RR^N)$ fulfilling $\pr_n \in \mathcal{I}_\RR$ for all $n\in \{1,\dots,N\}$ and $p^{-1} \in \Stetig(\RR^N)^\bd \subseteq \mathcal{I}_\RR$,
  thus $\Stetig_{\mathrm{rt}}(\RR^N) \subseteq \mathcal{I}_\RR$ because $\Stetig_{\mathrm{rt}}(\RR^N)$ is
  generated by $\pr_1,\dots,\pr_N$ and $p^{-1}$.
  
  Moreover, $\mathcal{I}_\RR$ is a proper \Sus\=/algebra of continuous functions on $\RR^N$ because $p$ is a proper function.
  Proposition~\ref{proposition:properness} thus shows that $p$ is also a proper element of $\mathcal{I}_\RR$ in the sense of 
  Definition~\ref{definition:properelement}. One even finds that $\mathcal{I}_\RR$ is a commutative \Sus\=/algebra of finite type with generators $\pr_1,\dots,\pr_N$:
  Indeed, given $f\in \mathcal{I}_\RR$, then by Lemma~\ref{lemma:sphericalcalculus}, the two functions $f_\mathrm{n} \coloneqq f (f^*f+p)^{-1} \in \Stetig_\SS(\RR^N)$ and 
  $f_\mathrm{d} \coloneqq (f^*f+p)^{-1} \in \Stetig_\SS(\RR^N)$ are elements of the $C^*$\=/subalgebra of $(\mathcal{I}_\RR)^\bd = \Stetig(\RR^N)^\bd$
  that is generated by $\{p^{-1}, \pr_1\,p^{-1}, \dots, \pr_N \,p^{-1}\}$.
  It is also clear that $f_\mathrm{d}$ is invertible in $\mathcal{I}_\RR$ and $f = f_\mathrm{n} f_\mathrm{d}^{-1}$.
  
  For part \refitem{item:bigcalculus:Phi} consider the map $\Phi_\RR \colon \mathcal{I}_\RR \to \mathcal{A}$. 
  Given $f\in \Stetig_\SS(\RR^N)$, then one has
  \begin{align*}
    \Phi_\RR(f)
    =
    \Phi_\SS\big( f (f^*f +p)^{-1} \big) \Phi_\SS\big( (f^*f +p)^{-1} \big)^{-1}
    =
    \Phi_\SS(f) \Phi_\SS\big( (f^*f +p)^{-1} \big) \Phi_\SS\big( (f^*f +p)^{-1} \big)^{-1}
    =
    \Phi_\SS(f)
  \end{align*}
  and so $\Phi_\RR$ extends $\Phi_\SS$. Given functions $f\in \mathcal{I}_\RR$ and $h \in \Stetig_\SS(\RR^N)$
  such that $\Phi_\SS(h)$ has a multiplicative inverse $\Phi_\SS(h)^{-1} \in \mathcal{A}$ and such that $fh \in \Stetig_\SS(\RR^N)$ holds,
  then
  \begin{align*}
    \Phi_\RR(f)
    &=
    \Phi_\SS\big( f (f^*f +p)^{-1} \big) \Phi_\SS\big( (f^*f +p)^{-1} \big)^{-1}
    \\
    &=
    \Phi_\SS(h)^{-1} \Phi_\SS\big( hf(f^*f +p)^{-1} \big) \Phi_\SS\big( (f^*f +p)^{-1} \big)^{-1}
    \\
    &=
    \Phi_\SS(h)^{-1} \Phi_\SS( f h) \Phi_\SS\big( (f^*f +p)^{-1} \big) \Phi_\SS\big( (f^*f +p)^{-1} \big)^{-1}
    \\
    &=
    \Phi_\SS( f h) \Phi_\SS(h)^{-1}
    .
  \end{align*}
  For example, $p^{-1} \in\Stetig_\SS(\RR^N)$ and $\pr_n\,p^{-1} \in \Stetig_\SS(\RR^N)$ for all $n\in \{1,\dots,N\}$
  by Lemma~\ref{lemma:sphericalcalculus}, and $\Phi_\SS(p^{-1}) = \Phi_{\mathrm{rt}}(p^{-1}) = (\Unit+a_1^2 + \dots + a_N^2)^{-1}$
  is invertible in $\mathcal{A}$. So
  \begin{align*}
    \Phi_\RR(\pr_n)
    = 
    \Phi_\SS(\pr_n \,p^{-1}) \Phi_\SS(p^{-1})^{-1}
    =
    \Phi_{\mathrm{rt}}(\pr_n \,p^{-1}) \Phi_{\mathrm{rt}}(p^{-1})^{-1}
    =
    \Phi_{\mathrm{rt}}(\pr_n)
    =
    a_n
    .
  \end{align*}
  Moreover, given $f,g\in \mathcal{I}_\RR$, then $f (f^*f+p)^{-1}, (f^*f+p)^{-1}, g (g^*g+p)^{-1}, (g^*g+p)^{-1} \in \Stetig_\SS(\RR^N)$
  has already been shown, thus $fh,gh,(f+g)h, fgh \in \Stetig_\SS(\RR^N)$ with $h \coloneqq (f^*f + p)^{-1}(g^*g + p)^{-1} \in \Stetig_\SS(\RR^N)$.
  As $\Phi_\SS(h) = \Phi_\SS\big( (f^*f + p)^{-1}\big) \Phi_\SS\big((g^*g + p)^{-1} \big)$ is invertible in $\mathcal{A}$, one finds that
  \begin{align*}
    \Phi_\RR(f+g)
    &=
    \Phi_\SS\big( (f+g) h \big) \Phi_\SS(h)^{-1}
    = 
    \Phi_\SS(fh) \Phi_\SS(h)^{-1} + \Phi_\SS(gh) \Phi_\SS(h)^{-1}
    =
    \Phi_\RR(f) + \Phi_\RR(g)
  \end{align*}
  and similarly also
  \begin{align*}
    \Phi_\RR(fg)
    &=
    \Phi_\SS( fg h ) \Phi_\SS(h)^{-1}
    \\
    &=
    \Phi_\SS\big( f(f^*f+p)^{-1} \big)\Phi_\SS\big( g(g^*g+p)^{-1} \big) \Phi_\SS\big((g^*g+p)^{-1}\big)^{-1} \Phi_\SS\big((f^*f+p)^{-1}\big)^{-1}
    \\
    &=
    \Phi_\RR(f) \Phi_\RR(g)
  \end{align*}
  hold. Compatibility of $\Phi_\RR$ with the $^*$\=/involution follows immediately from the compatibility of $\Phi_\SS$ with the $^*$\=/involution,
  so it has been shown that $\Phi_\RR$ is a unital $^*$\=/homomorphism that extends $\Phi_\SS$, especially $\Phi_\RR(p^{-1}) = \Phi_\SS(p^{-1}) = \Phi_\mathrm{rt}(p^{-1})$,
  and which fulfils $\Phi_\RR(\pr_n) = \Phi_\mathrm{rt}(\pr_n)$ for all $n\in \{1,\dots,N\}$. As $\Stetig_{\mathrm{rt}}(\RR^N)$ is generated
  by $p^{-1}$ and $\pr_1,\dots,\pr_N$, it follows that $\Phi_\RR$ also extends $\Phi_{\mathrm{rt}}$.
  Note also that $\Phi_\RR(f) \in \{a_1,\dots,a_N\}''$ for all $f\in \mathcal{I}_\RR$ because $\Phi_\SS\big(f(f^*f+p)^{-1}\big),\Phi_\SS\big((f^*f+p)^{-1}\big) \in \{a_1,\dots,a_N\}''$
  by Proposition~\ref{proposition:sphericalcalculus}.
  
  Finally for part \refitem{item:bigcalculus:calculus} it only remains to combine the previous results: By part \refitem{item:bigcalculus:I},
  $\mathcal{I}_\RR$ is an intermediate $^*$\=/subalgebra of $\Stetig(\RR^N)$ fulfilling $\pr_n \in \mathcal{I}_\RR$ for all $n\in \{1,\dots,N\}$.
  By part \refitem{item:bigcalculus:Phi}, $\Phi_\RR \colon \mathcal{I}_\RR \to \mathcal{A}$ is a unital $^*$\=/homomorphism and $\Phi_\RR(\pr_n) = \Phi_\mathrm{rt}(\pr_n) = a_n$
  for all $n\in \{1,\dots,N\}$. So $\big(\RR^N, \mathcal{I}_\RR,\Phi_\RR\big)$ is a continuous calculus for $a_1,\dots,a_N$.
\end{proof}
The continuous calculus that has just been constructed is maximal in the following sense:
\begin{corollary} \label{corollary:bigcalculus}
  Let $\mathcal{A}$ be a \Sus\=/algebra, $N\in \NN$ and $a_1,\dots,a_N\in \mathcal{A}_\Hermitian$ pairwise commuting, and 
  let $\big(\RR^N, \mathcal{I}_\RR, \Phi_\RR\big)$ be the continuous calculus for $a_1,\dots,a_N$ that was constructed
  in the previous Proposition~\ref{proposition:bigcalculus}. Moreover, let $(\RR^N, \mathcal{J}, \Psi)$ be
  another continuous calculus for $a_1,\dots,a_N$ that is also defined on a space $\mathcal{J}$ of continuous functions
  on $\RR^N$. Then $\mathcal{J} \subseteq \mathcal{I}_\RR$ and $\Psi(f) = \Phi_\RR(f)$ for all $f\in \mathcal{J}$.
\end{corollary}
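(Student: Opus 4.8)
The plan is to reduce the statement to the uniqueness of the rational calculus (Proposition~\ref{proposition:rationalcalculus}) together with a density argument for $\Phi_\SS$. Since $\mathcal{J}$ is an intermediate $^*$\=/subalgebra of $\Stetig(\RR^N)$, Proposition~\ref{proposition:intermediateSus} makes it a \Sus\=/algebra, so $\Psi$ is automatically continuous and $\Stetig(\RR^N)^\bd \subseteq \mathcal{J}$; together with $\pr_1,\dots,\pr_N \in \mathcal{J}$ this yields $\Stetig_\SS(\RR^N) \subseteq \mathcal{J}$ and $\Stetig_{\mathrm{rt}}(\RR^N) \subseteq \mathcal{J}$. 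Restricting $\Psi$ to $\Stetig_{\mathrm{rt}}(\RR^N)$ gives a positive unital $^*$\=/homomorphism into $\mathcal{A}$ sending $\pr_n$ to $a_n$, so by the uniqueness part of Proposition~\ref{proposition:rationalcalculus} it coincides with $\Phi_{\mathrm{rt}}$. In particular $\Psi$ and $\Phi_{\mathrm{rt}}$ agree on $\Stetig_\SS(\RR^N)\cap\Stetig_{\mathrm{rt}}(\RR^N)$, which is dense in $\Stetig_\SS(\RR^N)$ by Lemma~\ref{lemma:sphericalcalculus}; since $\Phi_\SS$ is by definition the continuous extension of the restriction of $\Phi_{\mathrm{rt}}$ to this intersection and $\Psi$ is continuous, I would conclude $\Psi(g) = \Phi_\SS(g)$ for every $g\in\Stetig_\SS(\RR^N)$.

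Next I would show $\mathcal{J}\subseteq\mathcal{I}_\RR$. Let $f\in\mathcal{J}$ and, as in Proposition~\ref{proposition:bigcalculus}, write $p\coloneqq\Unit+\pr_1^2+\dots+\pr_N^2$. Then $f^*f+p\ge\Unit$ pointwise, so $(f^*f+p)^{-1}\in\Stetig(\RR^N)^\bd\subseteq\mathcal{J}$, and moreover $(f^*f+p)^{-1}\in\Stetig_\SS(\RR^N)$ as recorded at the start of the proof of Proposition~\ref{proposition:bigcalculus}. Applying the unital $^*$\=/homomorphism $\Psi$ to the identity $(f^*f+p)^{-1}(f^*f+p)=\Unit$ shows that $\Psi\big((f^*f+p)^{-1}\big)$ is invertible in $\mathcal{A}$ with inverse $\Psi(f^*f+p)$. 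By the first paragraph, $\Psi\big((f^*f+p)^{-1}\big)=\Phi_\SS\big((f^*f+p)^{-1}\big)$, so $\Phi_\SS\big((f^*f+p)^{-1}\big)$ is invertible in $\mathcal{A}$, which is exactly the condition defining membership in $\mathcal{I}_\RR$ in part~\refitem{item:bigcalculus:I} of Proposition~\ref{proposition:bigcalculus}. Hence $\mathcal{J}\subseteq\mathcal{I}_\RR$.

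Finally, for $f\in\mathcal{J}\subseteq\mathcal{I}_\RR$, both $(f^*f+p)^{-1}$ and $f(f^*f+p)^{-1}$ lie in $\Stetig_\SS(\RR^N)$, so $\Phi_\SS$ and $\Psi$ agree on them; using that $\Psi$ is a $^*$\=/homomorphism and the formula for $\Phi_\RR$ from part~\refitem{item:bigcalculus:Phi},
\begin{align*}
  \Phi_\RR(f)
  &= \Phi_\SS\big(f(f^*f+p)^{-1}\big)\,\Phi_\SS\big((f^*f+p)^{-1}\big)^{-1}
   = \Psi\big(f(f^*f+p)^{-1}\big)\,\Psi\big((f^*f+p)^{-1}\big)^{-1} \\
  &= \Psi(f)\,\Psi\big((f^*f+p)^{-1}\big)\,\Psi\big((f^*f+p)^{-1}\big)^{-1}
   = \Psi(f),
\end{align*}
which is the assertion. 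I do not expect a genuine obstacle: the construction in Proposition~\ref{proposition:bigcalculus} has already done the work of placing the relevant auxiliary functions into $\Stetig_\SS(\RR^N)$. The only points that need care are the transfer of invertibility in the second step --- note that $\Psi$ need not be injective or an order embedding, so one must argue purely algebraically, using that a unital $^*$\=/homomorphism maps a two\=/sided inverse to a two\=/sided inverse --- and the verification, already contained in Proposition~\ref{proposition:bigcalculus}, that $(f^*f+p)^{-1}$ and $f(f^*f+p)^{-1}$ really do belong to $\Stetig_\SS(\RR^N)$.
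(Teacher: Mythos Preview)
Your proof is correct and follows essentially the same approach as the paper's: both first establish that $\Psi$ agrees with $\Phi_\SS$ on $\Stetig_\SS(\RR^N)$ via agreement on $\Stetig_{\mathrm{rt}}(\RR^N)$ plus density and continuity, then use invertibility of $\Psi\big((f^*f+p)^{-1}\big)=\Phi_\SS\big((f^*f+p)^{-1}\big)$ to obtain $\mathcal{J}\subseteq\mathcal{I}_\RR$, and finally unwind the defining formula for $\Phi_\RR$. The only cosmetic difference is that you invoke the uniqueness clause of Proposition~\ref{proposition:rationalcalculus} explicitly, whereas the paper argues directly that $\Psi$ and $\Phi_\RR$ agree on the generators $p^{-1},\pr_1,\dots,\pr_N$ of $\Stetig_{\mathrm{rt}}(\RR^N)$.
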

\begin{proof}
  As $\Psi(\pr_n) = a_n = \Phi_\RR(\pr_n)$ for all $n\in \NN$, and as $\Stetig_\mathrm{rt}(\RR^N)$
  is by definition generated by $p^{-1},\pr_1,\dots,\pr_N \in \mathcal{I}_\RR \cap \mathcal{J}$
  with $p \coloneqq \Unit + \pr_1^2 + \dots + \pr_N^2 \in \mathcal{I}_\RR \cap \mathcal{J}$,
  it is clear that $\Stetig_\mathrm{rt}(\RR^N) \subseteq \mathcal{I}_\RR \cap \mathcal{J}$ and that $\Psi$ and $\Phi_\RR$ coincide
  on $\Stetig_\mathrm{rt}(\RR^N)$. As $\mathcal{I}_\RR$ and $\mathcal{J}$ are proper \Sus\=/algebras of continuous functions on $\RR^N$,
  the unital $^*$\=/homomorphisms $\Phi_\RR \colon \mathcal{I}_\RR \to \mathcal{A}$ and $\Psi \colon \mathcal{J} \to \mathcal{A}$ 
  are automatically positive by Corollary~\ref{corollary:niceMorphisms}, therefore are continuous and thus coincide even on the closure
  of $\Stetig_\mathrm{rt}(\RR^N)$ in $\mathcal{I}_\RR \cap \mathcal{J}$, which, by Lemma~\ref{lemma:sphericalcalculus},
  contains $\Stetig_\SS(\RR^N)$.
  Now let any $f\in \mathcal{J}$ be given, then $f(f^*f+p)^{-1} \in \Stetig_\SS(\RR^N)$ and $(f^*f+p)^{-1} \in \Stetig_\SS(\RR^N)$
  by the previous Proposition~\ref{proposition:bigcalculus}, and
  $\Phi_\RR\big((f^*f+p)^{-1}\big) = \Psi\big((f^*f+p)^{-1}\big) = \Psi(f^*f+p)^{-1}$ is invertible in $\mathcal{A}$, so $f\in \mathcal{I}_\RR$
  because $\Phi_\RR$ extends $\Phi_\SS$ by the previous Proposition~\ref{proposition:bigcalculus}.
  Similarly,
  $\Psi(f) = \Psi\big(f(f^*f+p)^{-1}\big) \Psi\big((f^*f+p)^{-1}\big){}^{-1} = \Phi_\RR\big(f(f^*f+p)^{-1}\big) \Phi_\RR\big((f^*f+p)^{-1}\big){}^{-1} = \Phi_\RR(f)$
  holds.
\end{proof}
The continuous calculus from Proposition~\ref{proposition:bigcalculus} now yields the universal
continuous calculus by taking a suitable quotient:
\begin{lemma} \label{lemma:largestpossible}
  Let $\mathcal{A}$ be a \Sus\=/algebra, $N\in \NN$ and let $\big(\RR^N, \mathcal{I}_\RR,\Phi_\RR\big)$ be the continuous
  calculus for some pairwise commuting elements $a_1,\dots,a_N\in \mathcal{A}_\Hermitian$ from
  Proposition~\ref{proposition:bigcalculus}. If $f\in \Stetig(\RR^N)^+_\Hermitian$ is coercive 
  and $\Phi_\RR(f^{-1})$ invertible in $\mathcal{A}$, then $f\in \mathcal{I}_\RR$.
\end{lemma}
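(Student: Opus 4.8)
Throughout write $p \coloneqq \Unit + \pr_1^2 + \dots + \pr_N^2 \in \Stetig(\RR^N)$, as in the construction of $\big(\RR^N,\mathcal{I}_\RR,\Phi_\RR\big)$. The plan is to verify the sufficient membership criterion for $\mathcal{I}_\RR$ provided by \refitem{item:bigcalculus:I} of Proposition~\ref{proposition:bigcalculus}: it is enough to produce one $h \in \Stetig_\SS(\RR^N)^+_\Hermitian$ such that $\Phi_\SS(h)$ is invertible in $\mathcal{A}$ and $(f^*f+p)^{-1} \ge h$ holds pointwise on $\RR^N$. Since $f$ is Hermitian, $f^*f = f^2$, so the target is $(f^2+p)^{-1} \ge h$. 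Since $f$ is coercive there is, after replacing $\epsilon$ by $\min\{\epsilon,1\}$ if necessary, an $\epsilon \in {]0,1]}$ with $f \ge \epsilon\Unit$; then $f^{-1}$ exists pointwise with $0 \le f^{-1} \le \epsilon^{-1}\Unit$, so $f^{-1} \in \Stetig(\RR^N)^\bd \subseteq \mathcal{I}_\RR$ and $\Phi_\RR(f^{-1})$ indeed makes sense.

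The first step is to reformulate the hypothesis in terms of $\Phi_\SS$. The function $f^{-1}p^{-1}$ is uniformly bounded and vanishes at $\infty$ (as $f^{-1}$ is bounded and $p^{-1}$ vanishes at $\infty$), hence $f^{-1}p^{-1} \in \Stetig_\SS(\RR^N)$ by Lemma~\ref{lemma:sphericalcalculus}. Applying \refitem{item:bigcalculus:Phi} of Proposition~\ref{proposition:bigcalculus} to $f^{-1} \in \mathcal{I}_\RR$ with the choice $p^{-1} \in \Stetig_\SS(\RR^N)$ playing the role of the auxiliary function — for which $\Phi_\SS(p^{-1}) = (\Unit + a_1^2 + \dots + a_N^2)^{-1}$ is invertible in the \Sus\=/algebra $\mathcal{A}$ and $f^{-1}\cdot p^{-1} \in \Stetig_\SS(\RR^N)$ — yields $\Phi_\RR(f^{-1}) = \Phi_\SS(f^{-1}p^{-1})\,\Phi_\SS(p^{-1})^{-1}$, and therefore $c \coloneqq \Phi_\SS(f^{-1}p^{-1}) = \Phi_\RR(f^{-1})\,\Phi_\SS(p^{-1})$. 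Both factors on the right are invertible in $\mathcal{A}$ (the first by hypothesis, the second because $\mathcal{A}$ is a \Sus\=/algebra), so $c$ is invertible, and hence so is $c^2 = \Phi_\SS\big((f^{-1}p^{-1})^2\big)$.

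The second step is to take $h \coloneqq \tfrac{\epsilon^2}{2}\,(f^{-1}p^{-1})^2 = \tfrac{\epsilon^2}{2}f^{-2}p^{-2} \in \Stetig_\SS(\RR^N)^+_\Hermitian$; here $h$ lies in $\Stetig_\SS(\RR^N)$ because $\Stetig_\SS(\RR^N)$ is a $^*$\=/subalgebra and it is positive as a nonnegative multiple of a square, and $\Phi_\SS(h) = \tfrac{\epsilon^2}{2}c^2$ is invertible by the previous step. It then only remains to check the pointwise estimate $(f^2+p)^{-1} \ge h$, which is equivalent to $\tfrac{\epsilon^2}{2}(f^2+p) \le f^2 p^2$. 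This is elementary, using $p \ge \Unit$, $f^2 \ge \epsilon^2\Unit$ and $\epsilon \le 1$: one has $f^2 p^2 \ge f^2 p \ge \tfrac12 f^2 + \tfrac12 f^2 p \ge \tfrac{\epsilon^2}{2} f^2 + \tfrac{\epsilon^2}{2} p$. With this $h$, \refitem{item:bigcalculus:I} of Proposition~\ref{proposition:bigcalculus} gives $f \in \mathcal{I}_\RR$, as claimed.

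The only genuine choice in the argument is the form of $h$. The first power $f^{-1}p^{-1}$ will not do, since $fp$ is not pointwise dominated by any fixed multiple of $f^2+p$ (take $f$ and $p$ simultaneously large); it is the square $(f^{-1}p^{-1})^2$ that works, precisely because the coercivity bound $f \ge \epsilon\Unit$ lets $(fp)^2$ dominate a positive multiple of $f^2+p$. Once $h$ is fixed, every remaining step is either a direct application of Lemma~\ref{lemma:sphericalcalculus} and parts \refitem{item:bigcalculus:I} and \refitem{item:bigcalculus:Phi} of Proposition~\ref{proposition:bigcalculus}, or a routine pointwise inequality, so I do not anticipate any substantial obstacle.
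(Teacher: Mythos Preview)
Your proof is correct and follows essentially the same strategy as the paper: verify the membership criterion of Proposition~\ref{proposition:bigcalculus}\refitem{item:bigcalculus:I} by exhibiting a suitable $h \in \Stetig_\SS(\RR^N)^+_\Hermitian$ with $\Phi_\SS(h)$ invertible and $(f^2+p)^{-1} \ge h$. The only difference is cosmetic: the paper takes $h = \tfrac{\epsilon^2}{2} f^{-2} p^{-1}$ (checking $f^2+p \le 2\epsilon^{-2} f^2 p$ directly and noting $0 \le h \le p^{-1}$), whereas you take the slightly smaller $h = \tfrac{\epsilon^2}{2} f^{-2} p^{-2}$; both choices work, and your closing remark that ``the first power $f^{-1}p^{-1}$ will not do'' is right in spirit but the paper's intermediate variant $f^{-2}p^{-1}$ does.
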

\begin{proof}
  Given a coercive $f\in \Stetig(\RR^N)^+_\Hermitian$, then there exists $\epsilon \in {]0,1]}$ such that $f \ge \epsilon \Unit$.
  Write again $p \coloneqq \Unit + \pr_{1}^2 + \dots + \pr_{N}^2 \in \Stetig_\mathrm{rt}(\RR^N)^+_\Hermitian \subseteq (\mathcal{I}_\RR)^+_\Hermitian$
  like in  Proposition~\ref{proposition:bigcalculus}, then the pointwise estimates $p \le \epsilon^{-2} f^2 p$ and $f^2 \le \epsilon^{-2} f^2 p$ hold,
  hence $f^2+p \le 2\epsilon^{-2} f^2 p$. Moreover, let $h \coloneqq (2\epsilon^{-2}f^2p)^{-1}$,
  then $0 \le h \le p^{-1}$ holds and therefore $h \in \Stetig_\SS(\RR^N)^+_\Hermitian$ by the criterium of Lemma~\ref{lemma:sphericalcalculus}.
  If $\Phi_\RR(f^{-1})$ is invertible in $\mathcal{A}$, then $\Phi_\RR(h) = \frac{1}{2} \epsilon^2\Phi_\RR(f^{-1})^2 \Phi_\RR(p)^{-1}$
  also is invertible in $\mathcal{A}$, so $f\in \mathcal{I}_\RR$ by part \refitem{item:bigcalculus:I}
  of Proposition~\ref{proposition:bigcalculus} and because $(f^2+p)^{-1}\ge h$.
\end{proof}
\begin{theorem} \label{theorem:univcc}
  Let $\mathcal{A}$ be a \Sus\=/algebra, $N\in \NN$ and $a_1,\dots,a_N\in \mathcal{A}_\Hermitian$ pairwise commuting. Then the universal
  continuous calculus $\big(\spec(a_1,\dots,a_N), \mathcal{F}(a_1,\dots,a_N), \Gamma_{a_1,\dots,a_N}\big)$
  for $a_1,\dots,a_N$ exists and can be constructed as follows:
  \begin{enumerate}
    \item The spectrum of $a_1,\dots,a_N$ is given by
      \begin{align}
        \spec(a_1,\dots,a_N)
        = 
        \RR^N \,\big\backslash \,\set[\Big]{x\in \RR^N}{ \sum\nolimits_{n=1}^N (x_n\Unit_\mathcal{A}-a_n)^2\text{ is a coercive element of }\mathcal{A}^+_\Hermitian }.
        \label{eq:ucc:spec}
      \end{align}
    \item The intermediate $^*$\=/subalgebra $\mathcal{F}(a_1,\dots,a_N)$ of $\Stetig\big(\spec(a_1,\dots,a_N)\big)$
      is
      \begin{align}
        \mathcal{F}(a_1,\dots,a_N) = \set[\big]{ f\at{\spec(a_1,\dots,a_N)} }{ f \in \mathcal{I}_\RR }
        \label{eq:ucc:F}
      \end{align}
      with $\mathcal{I}_\RR$ the intermediate $^*$\=/subalgebra of $\Stetig(\RR^N)$ from 
      Proposition~\ref{proposition:bigcalculus}, part~\refitem{item:bigcalculus:I}.
    \item The unital $^*$\=/homomorphism $\Gamma_{a_1,\dots,a_N} \colon \mathcal{F}(a_1,\dots,a_N) \to \mathcal{A}$
      is determined by
      \begin{align}
        \Gamma_{a_1,\dots,a_N}\big( f\at{\spec(a_1,\dots,a_N)} \big) = \Phi_\RR(f)
        \label{eq:ucc:Gamma}
      \end{align}
      for all $f \in \mathcal{I}_\RR$, with $\Phi_\RR$ the unital $^*$\=/homomorphism
      $\Phi_\RR \colon \mathcal{I}_\RR \to \mathcal{A}$ from 
      Proposition~\ref{proposition:bigcalculus}, part~\refitem{item:bigcalculus:Phi}.
  \end{enumerate}
  This construction has some additional properties: The map $\Gamma_{a_1,\dots,a_N} \colon \mathcal{F}(a_1,\dots,a_N) \to \mathcal{A}$ is
  an injective positive unital $^*$\=/homomorphism and even is an order embedding, and its image is a subset of the bicommutant
  $\{a_1,\dots,a_N\}''$. The space of functions $\mathcal{F}(a_1,\dots,a_N)$ is a commutative \Sus\=/algebra of finite type
  with generators $\pr_1,\dots,\pr_N$, and whenever $f \in \Stetig\big( \spec(a_1,\dots,a_N) \big){}^+_\Hermitian$ is coercive
  and $\Gamma_{a_1,\dots,a_N}(f^{-1})$ invertible in $\mathcal{A}$, then $f\in \mathcal{F}(a_1,\dots,a_N)$.
\end{theorem}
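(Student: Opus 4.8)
The strategy is to obtain the universal calculus as a quotient of the maximal calculus $(\RR^N,\mathcal{I}_\RR,\Phi_\RR)$ from Proposition~\ref{proposition:bigcalculus}, and to prove universality by pulling every competing calculus back to $\RR^N$ and invoking Corollary~\ref{corollary:bigcalculus}. Since $\mathcal{I}_\RR$ is a proper \Sus-algebra of continuous functions on $\RR^N$ and $\Phi_\RR$ is a positive unital $^*$-homomorphism, Corollary~\ref{corollary:niceMorphisms} applies: $\ker\Phi_\RR$ is a closed $^*$-ideal, so $\ker\Phi_\RR=\mathcal{V}_X$ for a unique closed $X\subseteq\RR^N$, which by Corollary~\ref{corollary:galoisvanishing} is the set of common zeros of $\ker\Phi_\RR$. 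I would then set $\mathcal{F}\coloneqq\set{f\at{X}}{f\in\mathcal{I}_\RR}$, equipped with the pointwise order, and define $\Gamma(f\at{X})\coloneqq\Phi_\RR(f)$; this is well defined and injective precisely because the kernel of the restriction map $f\mapsto f\at{X}$ on $\mathcal{I}_\RR$ equals $\mathcal{V}_X=\ker\Phi_\RR$. Using Tietze's theorem to extend bounded continuous functions on the closed set $X$ to elements of $\Stetig(\RR^N)^\bd\subseteq\mathcal{I}_\RR$, together with $\pr_n\in\mathcal{I}_\RR$, $\Phi_\RR(\pr_n)=a_n$, and the fact that $p\at{X}$ is a proper function on $X$, one checks that $(X,\mathcal{F},\Gamma)$ is a continuous calculus for $a_1,\dots,a_N$; positivity of $\Gamma$ and the \Sus-algebra property of $\mathcal{F}$ then come for free from Proposition~\ref{proposition:intermediateSus} and Corollary~\ref{corollary:niceMorphisms}.

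The heart of the argument, and the step I expect to be the main obstacle, is to identify this $X$ with the set on the right of \eqref{eq:ucc:spec}. Write $q_x\coloneqq\sum_{n=1}^N(x_n\Unit_\mathcal{A}-a_n)^2$ and $g_x\coloneqq\sum_{n=1}^N(\pr_n-x_n)^2\in\mathcal{I}_\RR$, so that $g_x$ is a proper function with $\Phi_\RR(g_x)=q_x$, and $r_x\coloneqq(g_x+\Unit)^{-1}\in\Stetig(\RR^N)^\bd\subseteq\mathcal{I}_\RR$ satisfies $r_x(x)=1$ and $\Phi_\RR(r_x)=(q_x+\Unit)^{-1}$. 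If $q_x$ is coercive, say $q_x\ge\eta\Unit$ with $\eta\in{]0,\infty[}$, then $h\coloneqq(1+\eta)^{-1}\Unit-r_x\in(\mathcal{I}_\RR)_\Hermitian$ has $\Phi_\RR(h)=(1+\eta)^{-1}\Unit-(q_x+\Unit)^{-1}\ge0$, so $\Phi_\RR(h)=\Phi_\RR(h_+)$ by Corollary~\ref{corollary:niceMorphisms} and therefore $h_-=h_+-h\in\ker\Phi_\RR=\mathcal{V}_X$; but $h_-(x)=1-(1+\eta)^{-1}>0$, so $x\notin X$. Conversely, if $x\notin X$ there is $f\in\ker\Phi_\RR$ with $f(x)\ne0$, and replacing $f$ by $f^*f/\abs{f(x)}^2$ one may assume $f\ge0$ and $f(x)=1$; then $f+g_x\in\mathcal{I}_\RR$ is pointwise bounded below by some $c\in{]0,\infty[}$ (since $f\ge1/2$ near $x$ while $g_x$ is bounded below away from any neighbourhood of $x$), hence coercive and thus invertible in the \Sus-algebra $\mathcal{I}_\RR$, and $\Phi_\RR((f+g_x)^{-1})$ is then a positive element $\le c^{-1}\Unit$ inverting $\Phi_\RR(f+g_x)=q_x$; since the inverse of an invertible positive element dominated by $c^{-1}\Unit$ is $\ge c\Unit$, it follows that $q_x\ge c\Unit$ is coercive. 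This gives \eqref{eq:ucc:spec}, and \eqref{eq:ucc:F} and \eqref{eq:ucc:Gamma} are then immediate.

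For universality, let $(Y,\mathcal{J},\Psi)$ be any continuous calculus for $a_1,\dots,a_N$. If $x\notin Y$, then $Y$ being closed forces $\sum_{n=1}^N(\pr_{Y;n}-x_n)^2\in\mathcal{J}$ to be coercive in the \Sus-algebra $\mathcal{J}$, so $q_x=\Psi\bigl(\sum_{n=1}^N(\pr_{Y;n}-x_n)^2\bigr)$ is coercive in $\mathcal{A}$, whence $x\notin X$ by the dictionary just proved; thus $X\subseteq Y$. To get the factorization I would pull $(Y,\mathcal{J},\Psi)$ back to $\RR^N$: the triple $(\RR^N,\mathcal{J}',\Psi')$ with $\mathcal{J}'\coloneqq\set{g\in\Stetig(\RR^N)}{g\at{Y}\in\mathcal{J}}$ and $\Psi'(g)\coloneqq\Psi(g\at{Y})$ is again a continuous calculus for $a_1,\dots,a_N$, so Corollary~\ref{corollary:bigcalculus} gives $\mathcal{J}'\subseteq\mathcal{I}_\RR$ and $\Psi'=\Phi_\RR\at{\mathcal{J}'}$. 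For $f\in\mathcal{J}$, a Tietze extension $\tilde f\in\Stetig(\RR^N)$ with $\tilde f\at{Y}=f$ then lies in $\mathcal{J}'\subseteq\mathcal{I}_\RR$, so $f\at{X}=\tilde f\at{X}\in\mathcal{F}$ and $\Gamma(f\at{X})=\Phi_\RR(\tilde f)=\Psi'(\tilde f)=\Psi(f)$. Hence $(X,\mathcal{F},\Gamma)$ is the universal continuous calculus, so it coincides with $\bigl(\spec(a_1,\dots,a_N),\mathcal{F}(a_1,\dots,a_N),\Gamma_{a_1,\dots,a_N}\bigr)$.

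The remaining assertions all reduce to earlier results. The image of $\Gamma$ lies in $\{a_1,\dots,a_N\}''$ because this holds for $\Phi_\RR$ by Proposition~\ref{proposition:bigcalculus}\refitem{item:bigcalculus:Phi}. For the order embedding property it suffices to show that $\Gamma(\bar f)\ge0$ implies $\bar f\ge0$: lifting $\bar f\in\mathcal{F}_\Hermitian$ to some $f\in(\mathcal{I}_\RR)_\Hermitian$, Corollary~\ref{corollary:niceMorphisms} yields $f-f_+\in\ker\Phi_\RR=\mathcal{V}_X$, hence $\bar f=f\at{X}=f_+\at{X}\ge0$. That $\mathcal{F}$ is a commutative \Sus-algebra of finite type with generators $\pr_1,\dots,\pr_N$ follows from Proposition~\ref{proposition:surjectivefinite} applied to the surjective positive restriction homomorphism $\mathcal{I}_\RR\to\mathcal{F}$, since $\mathcal{I}_\RR$ has this property by Proposition~\ref{proposition:bigcalculus}\refitem{item:bigcalculus:I}. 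Finally, if $f\in\Stetig\bigl(\spec(a_1,\dots,a_N)\bigr){}^+_\Hermitian$ is coercive with $f\ge\epsilon\Unit$ and $\Gamma(f^{-1})$ is invertible, I would take a Tietze extension of $f$ and replace it by its pointwise maximum with $\epsilon\Unit$ to obtain a coercive $\tilde f\in\Stetig(\RR^N)^+_\Hermitian$ with $\tilde f\at{X}=f$ and $\tilde f^{-1}\at{X}=f^{-1}$; then $\Phi_\RR(\tilde f^{-1})=\Gamma(f^{-1})$ is invertible, so $\tilde f\in\mathcal{I}_\RR$ by Lemma~\ref{lemma:largestpossible} and $f=\tilde f\at{X}\in\mathcal{F}$.
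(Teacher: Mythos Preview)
Your proposal is correct and follows essentially the same approach as the paper: quotient the maximal calculus $(\RR^N,\mathcal{I}_\RR,\Phi_\RR)$ by its kernel, identify $\ker\Phi_\RR$ as a vanishing ideal $\mathcal{V}_X$ via Corollary~\ref{corollary:niceMorphisms}, describe $X$ by the coercivity condition, and prove universality by pulling competing calculi back to $\RR^N$ and applying Corollary~\ref{corollary:bigcalculus}. The only notable variation is in the converse half of the spectrum identification: where the paper argues directly with the positive part of $f_{x,\epsilon}=\sum_n(\pr_n-x_n)^2-\epsilon\Unit$ on $X$, you instead build a coercive $f+g_x$ with $f\in\ker\Phi_\RR$ and read off $q_x\ge c\Unit$ from the bound on its inverse (which is justified in a \Sus-algebra via the square root of $q_x$); both arguments are valid and of comparable length.
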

\begin{proof}
  Let $\big(\RR^N, \mathcal{I}_\RR, \Phi_\RR\big)$ be the continuous calculus for $a_1,\dots,a_N$ that was constructed in
  Proposition~\ref{proposition:bigcalculus} and write again 
  $p \coloneqq \Unit + \pr_{1}^2 + \dots + \pr_{N}^2 \in \Stetig_\mathrm{rt}(\RR^N)^+_\Hermitian \subseteq (\mathcal{I}_\RR)^+_\Hermitian$. 
  As $p$ is proper, $\mathcal{I}_\RR$ is a proper \Sus\=/algebra of continuous functions on $\RR^N$, so the results
  from Section~\ref{sec:propersus} apply: By Corollary~\ref{corollary:niceMorphisms},
  the kernel of $\Phi_\RR$ is a closed $^*$\=/ideal of $\mathcal{I}_\RR$ and can be described as
  $\ker \Phi_\RR = \mathcal{V}_Z = \set[\big]{f\in\mathcal{I}_\RR}{f\at{Z} = 0}$, where $Z$ is the closed subset
  $Z = \set[\big]{x\in \RR^N}{f(x) = 0\textup{ for all }f\in\ker \Phi_\RR}$ of $\RR^N$
  by Proposition~\ref{proposition:nullstellen}.
  
  In order to show that the universal continuous calculus for $a_1,\dots,a_N$ exists and that it can be constructed
  as described above, the first step is to show that $Z$ coincides with the right-hand side of \eqref{eq:ucc:spec}, i.e.~that
  \begin{align}
    Z
    = 
    \RR^N \,\big\backslash \,\set[\Big]{x\in \RR^N}{ \sum\nolimits_{n=1}^N (x_n\Unit_\mathcal{A}-a_n)^2\text{ is a coercive element of }\mathcal{A}^+_\Hermitian }
    \label{eq:ucc:spec:internal}   
    \tag{\textup{$*$}}
  \end{align}
  holds.
  From $\Unit \in \mathcal{I}_\RR$ and $\pr_n \in \mathcal{I}_\RR$ for all $n\in \{1,\dots,N\}$ it 
  follows that the continuous function $f_{x,\epsilon} \coloneqq \sum_{n=1}^N (x_n\Unit-\pr_n)^2 - \epsilon \Unit$
  on $\RR^N$ is, for all $x\in \RR^N$ and all $\epsilon \in {]0,\infty[}$, an element of $(\mathcal{I}_\RR)_\Hermitian$.
  Now given $x\in \RR^N$ and $\epsilon \in {]0,\infty[}$ such that $\sum_{n=1}^N (x_n\Unit_\mathcal{A}-a_n)^2 \ge \epsilon \Unit_\mathcal{A}$, 
  then this means that $\Phi_\RR(f_{x,\epsilon}) \in \mathcal{A}^+_\Hermitian$
  and Corollary~\ref{corollary:niceMorphisms} shows that $f_{x,\epsilon;-} \in \ker \Phi_\RR$,
  where $f_{x,\epsilon;-}$ is the negative part of $f_{x,\epsilon}$.
  In this case one has $x \notin Z$ because $f_{x,\epsilon;-}(x) = \epsilon \neq 0$.
  Conversely, if $x \in \RR^N \backslash Z$, then there exists $\epsilon \in {]0,\infty[}$ such that $f_{x,\epsilon}(z) \ge 0$
  for all $z \in Z$ because $\RR^N \backslash Z$ is an open neighbourhood of $x$. So $f_{x,\epsilon}$ and its positive part $f_{x,\epsilon;+}$
  coincide on $Z$, and therefore
  $\sum_{n=1}^N (x_n\Unit_\mathcal{A}-a_n)^2 - \epsilon \Unit_\mathcal{A}  = \Phi(f_{x,\epsilon}) = \Phi(f_{x,\epsilon;+}) \in \mathcal{A}^+_\Hermitian$,
  which shows that $\sum_{n=1}^N (x_n\Unit_\mathcal{A}-a_n)^2$ is coercive. We conclude that identity \eqref{eq:ucc:spec:internal} indeed holds.
  
  The next step is to show that $\mathcal{I}_Z \coloneqq \set[\big]{f\at{Z}}{f\in\mathcal{I}_\RR}$ like in \eqref{eq:ucc:F}
  is an intermediate $^*$\=/subalgebra of $\Stetig(Z)$
  and that $\pr_{Z;n} \in \mathcal{I}_Z$ for all $n \in \{1,\dots,N\}$:
  As the restriction map $\argument \at{Z} \colon \Stetig(\RR^N) \to \Stetig(Z)$ is a unital $^*$\=/homomorphism, it is clear
  that $\mathcal{I}_Z$ is a unital $^*$\=/subalgebra of $\Stetig(Z)$, and $\pr_{Z;n} = \pr_{\RR^N;n} \at{Z} \in \mathcal{I}_Z$
  holds for all $n\in \{1,\dots,N\}$. In order to check that $\Stetig(Z)^\bd \subseteq \mathcal{I}_Z$, 
  let a Hermitian element $f\in \Stetig(Z)^\bd_\Hermitian$ be given. Then one can construct an extension 
  $f^\mathrm{ext} \in \Stetig(\RR^N)^\bd_\Hermitian \subseteq \mathcal{I}_\RR$ fulfilling
  $f^\mathrm{ext} \at{Z} = f$ by applying Tietze's extension theorem, so $f \in \mathcal{I}_Z$.
  For general (not necessarily Hermitian) $f \in \Stetig(Z)^\bd$ this shows that $\RE(f), \IM(f) \in \mathcal{I}_Z$
  and therefore also $f \in \mathcal{I}_Z$.
  Note that instead of the most general form of Tietze's extension theorem, one can also use one
  of the well-known explicit formulae for such an extension of a continuous function $f \colon Z \to \RR$ which is bounded from below.
  An example is
  \begin{align}
    f^\mathrm{ext}(x)
    \coloneqq
    \inf_{z \in Z} \bigg( f(z) + \frac{\mathrm{d}(z,x) - \mathrm{d}_Z(x)}{\mathrm{d}_Z(x)} \bigg)
    \in \big[ {-\seminorm{\infty}{f}} , \seminorm{\infty}{f} \big]
    \label{eq:ucc:spec:internal2}   
    \tag{\textup{$**$}}
  \end{align}
  for all $x\in\RR^N\backslash Z$, where $\mathrm{d}$ is any metric on $\RR^N$ that induces its standard topology
  and $\mathrm{d}_Z \colon \RR^N \to {[0,\infty[}$ the distance from $Z$, i.e.~$\mathrm{d}_Z(x) \coloneqq \inf_{z\in Z} \mathrm{d}(z,x)$ for all $x\in \RR^N$.
  
  Like in \eqref{eq:ucc:Gamma} one can now construct a well-defined unital $^*$\=/homomorphism $\Phi_Z \colon \mathcal{I}_Z \to \mathcal{A}$,
  $\Phi_Z(f\at{Z}) \coloneqq \Phi_\RR(f)$,
  because $\ker \Phi_\RR \supseteq \set[\big]{f\in \mathcal{I}_\RR}{ f\at{Z} = 0}$ as shown above.
  Then $\Phi_Z(\pr_{Z;n}) = \Phi_\RR(\pr_{\RR^N;n}) = a_n$ holds for all $n\in \{1,\dots,N\}$,
  so $(Z,\mathcal{I}_Z,\Phi_Z)$ is a continuous calculus for $a_1,\dots,a_N$. It is even
  the universal one:
  Let any continuous calculus $(Y,\mathcal{J},\Psi)$ for $a_1,\dots,a_N$ be given. Then $Y \supseteq Z$
  because for every $x \in \RR^N \backslash Y$ there is an $\epsilon \in {]0,\infty[}$ such that $f_{x,\epsilon} \at Y \ge 0$
  as discussed above, and therefore $\sum_{n=1}^N (x_n\Unit_\mathcal{A}-a_n)^2 - \epsilon \Unit_\mathcal{A} = \Psi(f_{x,\epsilon} \at{Y}) \ge 0$,
  which shows that $x \in \RR^N \backslash Z$ by \eqref{eq:ucc:spec:internal}. It is now easy to check that $\big(\RR^N, \mathcal{J}^\mathrm{ext}, \Psi^\mathrm{ext}\big)$
  with $\mathcal{J}^\mathrm{ext} \coloneqq \set[\big]{f^\mathrm{ext}\in\Stetig(\RR^N)}{f^\mathrm{ext}\at{Y} \in \mathcal{J}}$ and
  $\Psi^\mathrm{ext}(f) \coloneqq \Psi(f^\mathrm{ext}\at{Y})$ for all $f^\mathrm{ext}\in \mathcal{J}^\mathrm{ext}$ is a continuous
  calculus for $a_1,\dots,a_N$, so Corollary~\ref{corollary:bigcalculus} shows that $\mathcal{J}^\mathrm{ext} \subseteq \mathcal{I}_\RR$
  and $\Psi^\mathrm{ext}(f^\mathrm{ext}) = \Phi_\RR(f^\mathrm{ext})$ for all $f^\mathrm{ext}\in \mathcal{J}^\mathrm{ext}$.
  Given any $f \in \mathcal{J}^+_\Hermitian$, then one can again construct an extension 
  $f^\mathrm{ext} \in \Stetig(\RR^N)^+_\Hermitian$ fulfilling $f^\mathrm{ext} \at{Y} = f$
  by using the explicit formula \eqref{eq:ucc:spec:internal2} with $Y$ in place of $Z$, or by taking the positive part of any extension of $f$ to $\RR^N$.
  But then
  $f^\mathrm{ext} \in \mathcal{J}^\mathrm{ext} \subseteq \mathcal{I}_\RR$ implies that $f\at{Z} = f^\mathrm{ext}\at{Z} \in \mathcal{I}_Z$
  and $\Phi_Z(f\at{Z}) = \Phi_\RR(f^\mathrm{ext}) = \Psi^\mathrm{ext}(f^\mathrm{ext}) = \Psi(f)$. As a general
  $f\in \mathcal{J}$ can be decomposed as $f = \sum_{k=0}^3 \I^k f_k$ with $f_k \in \mathcal{J}^+_\Hermitian$,
  e.g.~using the positive and negative parts of the real and imaginary parts of $f$, this guarantees
  that $f\at{Z} \in \mathcal{I}_Z$ and $\Phi_Z(f\at{Z}) = \Psi(f)$ hold for all $f\in \mathcal{J}$. So $(Z,\mathcal{I}_Z,\Phi_Z)$ is the
  universal continuous calculus for $a_1,\dots,a_N$, thus $\spec(a_1,\dots,a_N) = Z$, $\mathcal{F}(a_1,\dots,a_N) = \mathcal{I}_Z$
  and $\Gamma_{a_1,\dots,a_N} = \Phi_Z$, and \eqref{eq:ucc:spec}, \eqref{eq:ucc:F} and \eqref{eq:ucc:Gamma}
  are fulfilled.
  
  It only remains to check the additional properties mentioned in the statement of this theorem:
  By construction of $\Phi_Z$, its image in $\mathcal{A}$ is identical to the image of $\Phi_\RR$, hence is again a subset
  of the bicommutant $\{a_1,\dots,a_N\}''$, see Proposition~\ref{proposition:bigcalculus}.
  Being a unital $^*$\=/homomorphism defined on a proper \Sus\=/algebra of continuous functions, $\Phi_Z$
  is automatically positive by Corollary~\ref{corollary:niceMorphisms}. It is also injective because 
  $\Phi_Z(f\at{Z}) = 0$ for some $f\in \mathcal{I}_\RR$ means $\Phi_\RR(f) = 0$, i.e.~$f \in \ker \Phi_\RR$,
  which means $f\at{Z} = 0$ as discussed above. Corollary~\ref{corollary:niceMorphisms} now also shows that $\Phi_Z$ is 
  an order embedding, because if $\Phi_Z(f) \in \mathcal{A}^+_\Hermitian$ with some $f\in (\mathcal{I}_{Z})_\Hermitian$,
  then $\Phi_Z(f) = \Phi_Z(f_+)$ implies $f = f_+ \ge 0$. 
  
  As $\mathcal{I}_\RR$ is a commutative
  \Sus\=/algebra of finite type with generators $\pr_{\RR^N;1}, \dots, \pr_{\RR^N;N}$ by
  Proposition~\ref{proposition:bigcalculus}, and as $\mathcal{I}_Z$ is
  the image of $\mathcal{I}_\RR$ under the positive unital $^*$\=/homomorphism $\argument\at{Z}$,
  it is also a commutative \Sus\=/algebra of finite type with generators $\pr_{Z;1}, \dots, \pr_{Z;N}$
  by Proposition~\ref{proposition:surjectivefinite}.
  
  Finally, if $f \in \Stetig(Z)^+_\Hermitian$ is coercive, then there exists $\epsilon \in {]0,\infty[}$ such that $f(z) \ge \epsilon$
  for all $z\in Z$ and one can construct an extension $f^\mathrm{ext} \in \Stetig(\RR^N)^+_\Hermitian$ fulfilling $f^\mathrm{ext} \at{Z} = f$ and
  $f^\mathrm{ext}(x) \ge \epsilon$ for all $x\in \RR^N$ by using the explicit formula \eqref{eq:ucc:spec:internal2}
  or by taking the pointwise maximum with $\epsilon$ of any extension of $f$ to $\RR^N$. Its inverse 
  $(f^\mathrm{ext})^{-1} \in \Stetig(\RR^N)^\bd \subseteq \mathcal{I}_\RR$ then fulfils $(f^\mathrm{ext})^{-1} \at{Z} = f^{-1}$, thus 
  $\Phi_\RR\big((f^\mathrm{ext})^{-1} \big) = \Phi_Z(f^{-1})$.
  If $\Phi_Z(f^{-1})$ is invertible in $\mathcal{A}$, then it follows from the previous Lemma~\ref{lemma:largestpossible} that $f^\mathrm{ext} \in \mathcal{I}_\RR$
  and therefore $f \in \mathcal{I}_Z$. 
\end{proof}
Note that this allows to construct e.g.~absolute values and positive or negative parts
of Hermitian elements of a \Sus\=/algebra $\mathcal{A}$, and also square roots of positive Hermitian elements and
``suprema'' or ``infima'' of commuting Hermitian elements by application of the universal continuous
calculus to the corresponding function. The result is the same as the constructions
in \cite{schoetz:EquivalenceOrderAlgebraicStructure} because the algebraic and order theoretic properties match,
as one can easily check: For example, given $a\in \mathcal{A}^+_\Hermitian$, then $(x\Unit - a)^2 \ge x^2 \Unit$
for all $x\in {]{-\infty},0[}$ shows that $\spec(a) \subseteq {[0,\infty[}$, and $\Gamma_{a}\big(\sqrt{\argument}\at{\spec(a)}\big)$
yields the square root of $a$ because it is a positive Hermitian element of the bicommutant $\{a\}''$ that squares to $a$.

The universal continuous calculus applies especially to the \Sus\=/algebras of
operators on a Hilbert space as in \cite[Sec.~8]{schoetz:EquivalenceOrderAlgebraicStructure} 
that can be constructed out of any selfadjoint operator. The spectrum of a single Hermitian element
can then be described in the usual way:
\begin{corollary} \label{corollary:RRspec}
  Let $\mathcal{A}$ be a \Sus\=/algebra and $a\in \mathcal{A}_\Hermitian$, then
  \begin{equation}
    \RR\backslash\spec(a) 
    = 
    \set[\big]{\lambda \in \RR}{\lambda\Unit-a\text{ is invertible in }\mathcal{A}\text{ and }(\lambda\Unit-a)^{-1}\in\mathcal{A}^\bd}.
    \label{eq:RRspec}
  \end{equation}
\end{corollary}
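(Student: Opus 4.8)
The plan is to specialize the main Theorem~\ref{theorem:univcc} to the one-dimensional case $N=1$ and then to carry out a short algebraic manipulation. By the description of the spectrum in \eqref{eq:ucc:spec}, a real number $\lambda$ lies in $\RR\setminus\spec(a)$ precisely when $(\lambda\Unit_\mathcal{A}-a)^2$ is a coercive element of $\mathcal{A}$ (being a square, it is automatically positive, so the condition ``coercive element of $\mathcal{A}^+_\Hermitian$'' is just coercivity). Writing $b\coloneqq\lambda\Unit_\mathcal{A}-a\in\mathcal{A}_\Hermitian$, the corollary thus reduces to the purely algebraic claim that, for an arbitrary Hermitian element $b$ of a \Sus\=/algebra $\mathcal{A}$, the element $b^2$ is coercive if and only if $b$ is invertible in $\mathcal{A}$ with $b^{-1}\in\mathcal{A}^\bd$.

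For the implication from left to right I would argue as follows. If $b^2\ge\epsilon\Unit$ for some $\epsilon\in{]0,\infty[}$, then $b^2$ is invertible because $\mathcal{A}$ is symmetric, and its inverse satisfies $0\le(b^2)^{-1}\le\epsilon^{-1}\Unit$ as recalled in the preliminaries. Since $b$ commutes with $b^2$, hence with $(b^2)^{-1}$, the element $c\coloneqq b(b^2)^{-1}$ is a two-sided multiplicative inverse of $b$; moreover $b^{-1}=c$ is Hermitian and $(b^{-1})^*b^{-1}=c^2=b^2(b^2)^{-2}=(b^2)^{-1}\le\epsilon^{-1}\Unit$, so $\seminorm{\infty}{b^{-1}}\le\epsilon^{-1/2}<\infty$, i.e.~$b^{-1}\in\mathcal{A}^\bd$.

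For the converse, suppose $b$ is invertible with $\mu\coloneqq\seminorm{\infty}{b^{-1}}<\infty$. Then $(b^{-1})^2=(b^{-1})^*b^{-1}\le\mu^2\Unit$, and conjugating this inequality by $b$ — using that $b(b^{-1})^2b=\Unit$ — yields $\Unit\le\mu^2b^2$, whence $b^2\ge\mu^{-2}\Unit$ is coercive. The one step that needs a word of care, and the closest thing to an obstacle in an otherwise routine argument, is the case $\mu=0$: this forces $(b^{-1})^2=0$ and then $\Unit=b(b^{-1})^2b=0$, so $\mathcal{A}=\{0\}$, in which trivial case \eqref{eq:RRspec} holds vacuously. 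Combining the two implications with the spectrum formula \eqref{eq:ucc:spec} gives \eqref{eq:RRspec}.
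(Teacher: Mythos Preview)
Your proof is correct. Both directions go through cleanly, and your handling of the degenerate case $\mu=0$ is careful and correct.

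The approach differs from the paper's in one of the two implications. For the direction ``$b$ invertible with $b^{-1}\in\mathcal{A}^\bd$ $\Rightarrow$ $b^2$ coercive'' you and the paper argue identically, conjugating the inequality $(b^{-1})^2\le\mu^2\Unit$ by $b$. For the converse direction ``$b^2$ coercive $\Rightarrow$ $b$ invertible with $b^{-1}\in\mathcal{A}^\bd$'', however, the paper instead invokes the continuous calculus itself: since $\lambda\notin\spec(a)$ and $\spec(a)$ is closed, the function $(\lambda\Unit-\pr)^{-1}$ is uniformly bounded on $\spec(a)$, hence lies in $\mathcal{F}(a)$, and its image under $\Gamma_a$ furnishes the bounded inverse of $\lambda\Unit-a$. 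Your argument is more elementary in that it uses only the symmetry of $\mathcal{A}$ (to invert the coercive $b^2$) together with the explicit formula $b^{-1}=b(b^2)^{-1}$, and never applies $\Gamma_a$ to any function. This has the advantage of relying only on the spectrum formula \eqref{eq:ucc:spec} from Theorem~\ref{theorem:univcc} and not on the calculus map itself; the paper's route, on the other hand, illustrates the intended use of $\Gamma_a$ and generalizes more directly to the normal case treated in the subsequent corollary.
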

\begin{proof}
  If $\lambda \in \RR$ has the property that $\lambda\Unit-a$ is invertible in $\mathcal{A}$
  with uniformly bounded inverse, then $(\lambda\Unit-a)^{-2} \le \seminorm{\infty}{(\lambda\Unit-a)^{-1}}^2 \Unit$
  implies $\seminorm{\infty}{(\lambda\Unit-a)^{-1}}^{-2} \,\Unit \le (\lambda\Unit-a)^2$, so $\lambda \in \RR\backslash \spec(a)$
  by the previous Theorem~\ref{theorem:univcc}.
  
  Conversely, let $\pr \coloneqq \id_\RR\at{\spec(a)}\in \mathcal{F}(a)$ be the coordinate function
  and let $\lambda \in \RR\backslash\spec(a)$ be given, then $\lambda\Unit_{\mathcal{F}(a)}-\pr$
  is invertible in $\Stetig(\spec(a))$ and its inverse is uniformly bounded because $\spec(a)$ is closed,
  so $(\lambda\Unit_{\mathcal{F}(a)}-\pr)^{-1} \in \Stetig(\spec(a))^\bd \subseteq \mathcal{F}(a)$.
  It follows that $\lambda\Unit_\mathcal{A}-a = \Gamma_a\big( \lambda\Unit_{\mathcal{F}(a)}-\pr\big)$ is invertible in $\mathcal{A}$
  with uniformly bounded inverse.
\end{proof}
A \neu{normal} element of a $^*$\=/algebra is an element $a$ fulfilling $a\,a^* = a^*a$. This is the case
if and only if its real and imaginary parts $\RE(a)$ and $\IM(a)$ commute. By identifying $\RR^2$ with $\CC$ via
the $\RR$-linear isomorphism $\RR^2 \ni (x_1,x_2) \mapsto x_1 + \I x_2 \in \CC$ and interpreting
$\spec_\CC(a) \coloneqq {\spec}\big(\RE(a),\IM(a)\big)$ as a closed subset of $\CC$, one obtains
the usual description of the spectrum of normal elements and the spectral mapping theorem:
\begin{corollary}
  Let $\mathcal{A}$ be a \Sus\=/algebra and $a\in \mathcal{A}$ a normal element, then
  \begin{equation}
    \CC\backslash\spec_\CC(a)
    = 
    \set[\big]{\lambda \in \CC}{\lambda\Unit-a\text{ is invertible in }\mathcal{A}\text{ and }(\lambda\Unit-a)^{-1}\in\mathcal{A}^\bd}.
    \label{eq:CCspec}
  \end{equation}
  Moreover, given pairwise commuting elements $a_1,\dots,a_N \in \mathcal{A}_\Hermitian$ with $N\in \NN$ and
  $f \in \mathcal{F}(a_1,\dots,a_N)$, then ${\spec_\CC}\big( \Gamma_{a_1,\dots,a_N}(f) \big)$
  is the closure in $\CC$ of the image of $\spec(a_1,\dots,a_N)$ under $f$.
\end{corollary}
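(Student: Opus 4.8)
The statement has two parts, and the plan is to establish \eqref{eq:CCspec} first and then deduce the spectral mapping theorem from it together with the order-embedding property of the universal continuous calculus.

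\emph{Proof of \eqref{eq:CCspec}.} Normality of $a$ means precisely that $\RE(a)$ and $\IM(a)$ commute, so $\spec_\CC(a) = \spec\big(\RE(a),\IM(a)\big)$ is described by \eqref{eq:ucc:spec} of Theorem~\ref{theorem:univcc}. Writing $\lambda = \lambda_1 + \I\lambda_2$ and $b \coloneqq \lambda\Unit - a$, normality gives $b^*b = bb^* = (\lambda_1\Unit-\RE(a))^2 + (\lambda_2\Unit-\IM(a))^2 =: c$, the cross terms cancelling because $\RE(a)$ and $\IM(a)$ commute; hence by \eqref{eq:ucc:spec}, $\lambda \notin \spec_\CC(a)$ is equivalent to $c$ being coercive in $\mathcal{A}$. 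If $c$ is coercive, say $c \ge \epsilon \Unit$ with $\epsilon \in {]0,\infty[}$, then symmetry of $\mathcal{A}$ yields $c^{-1}$ with $0 \le c^{-1} \le \epsilon^{-1}\Unit$, and $c^{-1}b^*$ and $b^*c^{-1}$ are a left and a right inverse of $b$ (using $c = b^*b$ and $c = bb^*$), hence coincide, so $b$ is invertible with $b^{-1} = c^{-1}b^*$ and $(b^{-1})^*b^{-1} = c^{-1} \le \epsilon^{-1}\Unit$, i.e.~$b^{-1}\in\mathcal{A}^\bd$; here all elements involved lie in and therefore commute inside the commutative bicommutant $\{\RE(a),\IM(a)\}''$. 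Conversely, if $b$ is invertible with $b^{-1}\in\mathcal{A}^\bd$, then $c^{-1} = (b^{-1})^*b^{-1} \le \seminorm{\infty}{b^{-1}}^2\Unit$, and conjugating by $c^{1/2}$ exactly as in the proof of Corollary~\ref{corollary:RRspec} gives $\seminorm{\infty}{b^{-1}}^{-2}\Unit \le c$, so $c$ is coercive and $\lambda\notin\spec_\CC(a)$.

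\emph{Spectral mapping theorem.} Write $X \coloneqq \spec(a_1,\dots,a_N)$, $\mathcal{F} \coloneqq \mathcal{F}(a_1,\dots,a_N) \subseteq \Stetig(X)$, $\Gamma \coloneqq \Gamma_{a_1,\dots,a_N}$, and $b \coloneqq \Gamma(f)$; by Theorem~\ref{theorem:univcc} the image of $\Gamma$ lies in the commutative bicommutant $\{a_1,\dots,a_N\}''$, so $b$ is normal and $\spec_\CC(b)$ makes sense. Since $\Gamma$ is a unital $^*$\=/homomorphism one has $\RE(b) = \Gamma(\RE(f))$ and $\IM(b) = \Gamma(\IM(f))$, so for $\lambda = \lambda_1 + \I\lambda_2 \in \CC$ the element $(\lambda_1\Unit_\mathcal{A}-\RE(b))^2 + (\lambda_2\Unit_\mathcal{A}-\IM(b))^2$ equals $\Gamma(g)$ with $g \coloneqq (\lambda\Unit_\mathcal{F}-f)^*(\lambda\Unit_\mathcal{F}-f) \in \mathcal{F}$, which as a function on $X$ is $x \mapsto \abs{\lambda - f(x)}^2$. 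By \eqref{eq:ucc:spec} applied to $\mathcal{A}$ and the pair $\RE(b),\IM(b)$, $\lambda\notin\spec_\CC(b)$ if and only if $\Gamma(g)$ is coercive in $\mathcal{A}$; since $\Gamma$ is an \emph{order embedding} (Theorem~\ref{theorem:univcc}) and $\Gamma(\epsilon\Unit_\mathcal{F}) = \epsilon\Unit_\mathcal{A}$, this happens if and only if $g$ is coercive in $\mathcal{F}$. Finally, $\mathcal{F}$ carries the pointwise order inherited from $\Stetig(X)$ and $g\in\mathcal{F}$, so coercivity of $g$ means $\inf_{x\in X}\abs{\lambda-f(x)}^2 > 0$, i.e.~$\lambda$ has positive distance from $f(X)$, i.e.~$\lambda\notin\overline{f(X)}$ (closure in $\CC$). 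Hence $\spec_\CC(b) = \overline{f(X)}$, which is the closure in $\CC$ of the image of $\spec(a_1,\dots,a_N)$ under $f$.

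Both parts are essentially bookkeeping; the two points that actually carry the argument are: first, that normality of $a$ both turns $(\lambda\Unit-a)^*(\lambda\Unit-a)$ into the real quadratic $(\lambda_1\Unit-\RE(a))^2+(\lambda_2\Unit-\IM(a))^2$ appearing in \eqref{eq:ucc:spec} and simultaneously upgrades invertibility of that quadratic to two-sided invertibility of $\lambda\Unit-a$; and second, that the spectral mapping theorem needs the full conclusion of Theorem~\ref{theorem:univcc} — namely that $\Gamma_{a_1,\dots,a_N}$ is an order embedding, not merely positive — because this is exactly what transfers the coercivity question from $\mathcal{A}$ to the function algebra $\mathcal{F}(a_1,\dots,a_N)$, where it reduces to the trivial pointwise statement.
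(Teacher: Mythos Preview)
Your proof is correct and follows essentially the same line as the paper's. The only tactical differences are: for the direction ``$\lambda\notin\spec_\CC(a)\Rightarrow\lambda\Unit-a$ invertible with bounded inverse'' in \eqref{eq:CCspec}, the paper applies $\Gamma_{\RE(a),\IM(a)}$ to the bounded continuous function $\big(\lambda\Unit-(\pr_1+\I\,\pr_2)\big)^{-1}$ on the closed set $\spec_\CC(a)$, whereas you invert $c$ directly via symmetry of $\mathcal{A}$ and build $b^{-1}=c^{-1}b^*$ by hand; and for the spectral mapping theorem, the paper routes through \eqref{eq:CCspec} (invertibility with bounded inverse) while you go straight through \eqref{eq:ucc:spec} (coercivity). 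Both variants rest on the same two facts you correctly identify at the end---normality rewrites $(\lambda\Unit-a)^*(\lambda\Unit-a)$ as the real quadratic in \eqref{eq:ucc:spec}, and the order-embedding property of $\Gamma_{a_1,\dots,a_N}$ transports coercivity between $\mathcal{A}$ and $\mathcal{F}(a_1,\dots,a_N)$. One small remark: your reference ``conjugating by $c^{1/2}$ exactly as in the proof of Corollary~\ref{corollary:RRspec}'' is slightly off, since that proof conjugates by $\lambda\Unit-a$ itself rather than a square root; your version is still valid because $c^{1/2}$ exists in the \Sus\=/algebra, but conjugating $c^{-1}\le\seminorm{\infty}{b^{-1}}^2\Unit$ by $b^*$ (using the axiom $d^*\!A\,d\le d^*B\,d$) gives the conclusion without invoking square roots.
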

\begin{proof}
  If $\lambda \in \CC$ has the property that $\lambda\Unit-a$ is invertible in $\mathcal{A}$
  with uniformly bounded inverse, then $((\lambda\Unit-a)^{-1})^*(\lambda\Unit-a)^{-1} \le \seminorm{\infty}{(\lambda\Unit-a)^{-1}}^2 \Unit$
  implies
  \begin{align*}
    \seminorm[\big]{\infty}{(\lambda\Unit-a)^{-1}}^{-2} \,\Unit
    \le
    (\lambda\Unit-a)^*(\lambda\Unit-a)
    =
    \big(\RE(\lambda)\Unit-\RE(a)\big)^2 + \big(\IM(\lambda)\Unit-\IM(a)\big)^2
    ,
  \end{align*}
  so $\lambda \in \CC\backslash \spec_\CC(a)$ by Theorem~\ref{theorem:univcc}.
  
  Conversely, $\id_\CC\at{\spec_\CC(a)} = \pr_1 + \I\, \pr_2 \in \mathcal{F}\big(\RE(a),\IM(a)\big)$,
  and for $\lambda \in \CC\backslash\spec_\CC(a)$ one thus finds that
  $\lambda \Unit - ( \pr_1 + \I\, \pr_2)$ is invertible in $\Stetig\big( \spec_\CC(a)\big)$
  with uniformly bounded inverse because $\spec_\CC(a)$ is closed, so 
  $\big(\lambda\Unit - (\pr_1 + \I\, \pr_2) \big){}^{-1} \in \Stetig\big( \spec_\CC(a)\big){}^\bd \subseteq \mathcal{F}\big(\RE(a),\IM(a)\big)$
  and $\lambda\Unit-a = \Gamma_{\RE(a),\IM(a)}\big( \lambda\Unit - (\pr_1 + \I\, \pr_2)\big)$ is invertible in $\mathcal{A}$
  with inverse $\Gamma_{\RE(a),\IM(a)}\big( (\lambda\Unit - (\pr_1 + \I\, \pr_2))^{-1}\big) \in \mathcal{A}^\bd$.
  
  Finally, let pairwise commuting $a_1,\dots,a_N \in \mathcal{A}_\Hermitian$ with $N\in \NN$ and $f\in \mathcal{F}(a_1,\dots,a_N)$
  be given. For $\lambda \in \CC$ define $f_\lambda \coloneqq \lambda \Unit -f\in \mathcal{F}(a_1,\dots,a_N)$,
  then for every $\epsilon \in {]0,\infty[}$, one has $f_\lambda^* f_\lambda \ge \epsilon^2 \Unit_{\mathcal{F}(a_1,\dots,a_N)}$ if and only
  if $\Gamma_{a_1,\dots,a_N}(f_\lambda^* f_\lambda) \ge \epsilon^2 \Unit_{\mathcal{A}}$ because $\Gamma_{a_1,\dots,a_N}$
  is an injective positive unital $^*$\=/homomorphism and an order embedding by Theorem~\ref{theorem:univcc}.
  As $\mathcal{A}$ is a \Sus\=/algebra and especially is symmetric, $\Gamma_{a_1,\dots,a_N}(f_\lambda^* f_\lambda) \ge \epsilon^2 \Unit_{\mathcal{A}}$
  is equivalent to $\Gamma_{a_1,\dots,a_N}(f_\lambda)$ being invertible in $\mathcal{A}$
  with inverse fulfilling $\big(\Gamma_{a_1,\dots,a_N}(f_\lambda)^{-1}\big)^*\Gamma_{a_1,\dots,a_N}(f_\lambda)^{-1} \le \epsilon^{-2} \Unit_\mathcal{A}$,
  i.e.~$\seminorm{\infty}{\Gamma_{a_1,\dots,a_N}(f_\lambda)^{-1}} \le \epsilon^{-1}$.
  It is now easy to check that, for fixed $\lambda \in \CC$, there exists $\epsilon \in {]0,\infty[}$
  such that $f_\lambda^* f_\lambda \ge \epsilon^2 \Unit_{\mathcal{F}(a_1,\dots,a_N)}$ if and only
  $\lambda$ is not an element of the closure of the image of $\spec(a_1,\dots,a_N)$ under $f$.
  By the above considerations, this is equivalent to 
  $\lambda \Unit_\mathcal{A} - \Gamma_{a_1,\dots,a_N}(f)$ being invertible in $\mathcal{A}$
  with inverse fulfilling $\seminorm{\infty}{(\lambda \Unit_\mathcal{A} - \Gamma_{a_1,\dots,a_N}(f))^{-1}} = \seminorm{\infty}{\Gamma_{a_1,\dots,a_N}(f_\lambda)^{-1}} \le \epsilon^{-1}$
  for this $\epsilon \in {]0,\infty[}$,
  i.e. to $\lambda \in \CC \backslash {\spec_\CC}\big( \Gamma_{a_1,\dots,a_N}(f) \big)$ by \eqref{eq:CCspec}.
\end{proof}
Note that the universal continuous calculus
for a normal element $a$ of a \Sus\=/algebra yields the expected results
$\Gamma_{\RE(a),\IM(a)}(\id_\CC \at{\spec_\CC(a)} ) = \Gamma_{\RE(a),\IM(a)}( \pr_1 + \I\, \pr_2) = a$
and $\Gamma_{\RE(a),\IM(a)}( \cc{\argument} \at{\spec_\CC(a)} ) = \Gamma_{\RE(a),\IM(a)}( \pr_1 - \I\, \pr_2) = a^*$.
\section{Representations of commutative \texorpdfstring{\Sus\=/algebras}{Su*-algebras} of finite type} \label{sec:rep}
Finally, the universal continuous calculus allows to identify the commutative \Sus\=/algebras of finite type
as the proper \Sus\=/algebras of continuous functions on closed subsets of Euclidean space:
\begin{proposition} \label{proposition:properonclosedsubsetofRN}
  Every proper \Sus\=/algebra of continuous functions on a closed subset $X$ of some $\RR^N$ with $N\in \NN$
  is a commutative \Sus\=/algebra of finite type.
\end{proposition}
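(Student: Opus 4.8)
The proposition asserts that a proper \Sus\=/algebra $\mathcal{I}$ of continuous functions on a closed subset $X\subseteq\RR^N$ is a commutative \Sus\=/algebra of finite type. Since $X$, being closed in $\RR^N$, is locally compact Hausdorff, Proposition~\ref{proposition:intermediateSus} already shows that $\mathcal{I}$ is a commutative \Sus\=/algebra, so the only thing to prove is that it is \emph{of finite type}, i.e.\ that it admits a finite tuple of generators in the sense of the corresponding definition. My plan is to write these generators down explicitly. The one point that requires care --- and the main obstacle --- is that, although $\Stetig(X)^\bd\subseteq\mathcal{I}$ makes every \emph{bounded} continuous function on $X$ available, the ambient coordinate functions $\pr_{X;1},\dots,\pr_{X;N}$ themselves are in general \emph{not} elements of $\mathcal{I}$, so one cannot simply take them as generators. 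The remedy is to combine a bounded injective reparametrisation of each coordinate (which recovers the topology of $X$) with the given proper function (which accounts for the behaviour at infinity). Concretely, I would set $a_0\coloneqq p$, where $p\in\mathcal{I}^+_\Hermitian$ is a proper function supplied by the hypothesis, $a_n\coloneqq\arctan\circ\pr_{X;n}\in\Stetig(X)^\bd\subseteq\mathcal{I}$ for $n\in\{1,\dots,N\}$, and $q\coloneqq\Unit+a_0^2+a_1^2+\dots+a_N^2\in\mathcal{I}^+_\Hermitian$.

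The first, routine, step is to check that $q$ is proper. Because $q\ge\Unit+p^2$ pointwise and $p$ is a proper function, each sublevel set $q^{-1}([0,m])$ is a closed subset of the compact set $p^{-1}([0,\sqrt{m}\,])$ and hence compact, so $q$ is a proper function on $X$; by Proposition~\ref{proposition:properness} it is then proper in $\mathcal{I}$. Moreover $q\ge\Unit$ is coercive, so $q$ is invertible in $\mathcal{I}$ with $q^{-1}\in\Stetig(X)^\bd$ and $0\le q^{-1}\le\Unit$.

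The heart of the argument is to identify the $C^*$\=/algebra $\genCstar{\{q^{-1},a_0q^{-1},\dots,a_Nq^{-1}\}}$. I would pass to the one-point compactification $X^+\coloneqq X\cup\{\infty\}$, which is compact Hausdorff, and observe that $q^{-1}$ and each $a_nq^{-1}$ extend continuously to $X^+$ with value $0$ at $\infty$: this holds for $q^{-1}$ because $q$ is a proper function, and for $a_nq^{-1}$ because $a_n^2\le q$ forces $\abs{a_nq^{-1}}\le q^{-1/2}$. Regarded as elements of $\Stetig(X^+)$ these functions separate the points of $X^+$ --- they separate any $x\in X$ from $\infty$ through $q^{-1}$, and they separate two distinct points $x,y\in X$ because $q^{-1}(x)=q^{-1}(y)$ together with $a_nq^{-1}(x)=a_nq^{-1}(y)$ for all $n$ would force $\arctan(x_n)=\arctan(y_n)$, hence $x_n=y_n$, for every $n\in\{1,\dots,N\}$. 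The Stone-Weierstraß theorem then yields that $\genCstar{\{q^{-1},a_0q^{-1},\dots,a_Nq^{-1}\}}$ is precisely the algebra of those bounded continuous functions on $X$ that admit a continuous extension to $X^+$ (the case of compact $X$ being even more direct, since then $\infty$ is isolated).

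It remains to verify the generating condition, which I would do by replaying the device used for $\mathcal{I}_\RR$ in the proof of Proposition~\ref{proposition:bigcalculus}: given $b\in\mathcal{I}$, put $c\coloneqq b^*b+q\in\mathcal{I}_\Hermitian$; then $c\ge q\ge\Unit$ is coercive, hence invertible in $\mathcal{I}$ with $c^{-1}\in\Stetig(X)^\bd$ and $0\le c^{-1}\le q^{-1}$. Taking $b_\mathrm{d}\coloneqq c^{-1}$ and $b_\mathrm{n}\coloneqq bc^{-1}$, the estimates $b_\mathrm{d}=c^{-1}\le q^{-1}$ and $b_\mathrm{n}^*b_\mathrm{n}=c^{-1}-c^{-1}qc^{-1}\le q^{-1}$ show that both $b_\mathrm{d}$ and $b_\mathrm{n}$ vanish at $\infty$ and therefore lie in $\genCstar{\{q^{-1},a_0q^{-1},\dots,a_Nq^{-1}\}}$; moreover $b_\mathrm{d}=c^{-1}$ is invertible in $\mathcal{I}$ with inverse $c\in\mathcal{I}$, and $b=b_\mathrm{n}b_\mathrm{d}^{-1}$. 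This exhibits $a_0,a_1,\dots,a_N$ as generators and completes the proof. As indicated, the only genuinely non-mechanical ingredient is the choice of these generators; once they are fixed, the remaining verifications are short and closely parallel the constructions of Section~\ref{sec:cccon}.
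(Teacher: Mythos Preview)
Your proof is correct, and it takes a genuinely different route from the paper's. The paper distinguishes the compact and non-compact cases; in the latter it builds a strictly increasing homeomorphism $g\colon\RR\to\RR$, adapted to the given proper function $p$, so that each $g\circ\pr_{X;n}$ lies in $\mathcal{I}$ (via the bound $|g\circ\pr_{X;n}|\le \Unit+p$ and Proposition~\ref{proposition:intermediateSus}). It then transports $\mathcal{I}$ along the homeomorphism $g^{\times N}$ to an algebra $\mathcal{J}$ on $Y=g^{\times N}(X)$ for which $(Y,\mathcal{J},\Phi)$ is a surjective continuous calculus, and finally invokes Theorem~\ref{theorem:univcc} (plus Proposition~\ref{proposition:surjectivefinite}) to conclude that $\mathcal{I}\cong\mathcal{F}(g\circ\pr_{X;1},\dots,g\circ\pr_{X;N})$ is of finite type with $N$ generators. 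Your argument instead adds one extra generator, namely $p$ itself, to the fixed bounded reparametrisations $\arctan\circ\pr_{X;n}$, and then verifies the definition of ``finite type'' directly via Stone--Weierstra\ss{} on $X^+$ and the numerator/denominator trick from Proposition~\ref{proposition:bigcalculus}. What you gain is a self-contained argument that avoids the heavy machinery of the universal continuous calculus (Theorem~\ref{theorem:univcc}); what the paper gains is a tuple of only $N$ generators and a tighter link to the surjectivity of the calculus itself. Both are clean; yours is arguably more elementary, at the modest cost of one additional generator.
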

\begin{proof}
  Let $X$ be a closed subset of $\RR^N$ for some $N\in \NN$ and let $\mathcal{I} \subseteq \Stetig(X)$ be a proper
  \Sus\=/algebra of continuous functions on $X$. We have to find generators for $\mathcal{I}$:
  
  First assume that $X$ is compact, then $\mathcal{I} = \Stetig(X) = \Stetig(X)^\bd$ and especially $\pr_n \in \mathcal{I}$ holds
  for all $n\in \{1,\dots,N\}$. In this case $\mathcal{I}$ is a commutative \Sus\=/algebra of finite type with generators
  $\pr_1,\dots,\pr_N$, because $p \coloneqq \Unit + \pr_1^2 + \dots + \pr_N^2$ is clearly proper in $\mathcal{I}$
  and because $\genCstar{\{p^{-1}, \pr_1 p^{-1}, \dots, \pr_N p^{-1}\}} = \mathcal{I}$ by the Stone-Weierstraß Theorem.
  
  Otherwise, i.e.~if $X$ is non-compact, it is possible that $\pr_n \notin \mathcal{I}$ for some $n\in \{1,\dots,N\}$ and the construction of generators
  is more complicated: In this case let $p \in \mathcal{I}^+_\Hermitian$ be a proper function and define
  $\seminorm{}{x} \coloneqq \max\big\{ \abs{x_1}, \dots,\abs{x_N} \big\}$ for all $x\in X$. Then for every $k\in \NN_0$
  there exists $x \in X$ with $\seminorm{}{x}\ge k$ and the sequence 
  $\NN_0 \ni k \mapsto \tilde{p}_k \coloneqq \inf_{x\in X, \seminorm{}{x}\ge k} p(x) \in {[0,\infty[}$
  is increasing and unbounded. One can now construct an unbounded sequence $(\alpha_k)_{k\in \NN_0}$
  that starts with $\alpha_0 = 0$ and which fulfils $\alpha_{k-1} < \alpha_{k}$ and $\alpha_k \le 1 + \tilde{p}_{k-1}$
  for all $k\in \NN$; for example, $\alpha_k \coloneqq 1-2^{-k} + \tilde{p}_{k-1}$ for all $k\in \NN$ is a suitable choice.
  From this sequence $(\alpha_k)_{k\in \NN}$ one obtains a piecewise defined continuous function $g \colon \RR \to \RR$,
  \begin{align*}
    t \mapsto g(t) \coloneqq
              \begin{cases}
                \alpha_{k-1}(k-t) + \alpha_k(1-k+t) & \text{ if } t \in {[k-1,k]} \text{ for some }k \in \NN\\
                -\alpha_{k-1}(k+t)-\alpha_k (1-k-t)  & \text{ if } t \in {[{-k},{-k+1}]} \text{ for some }k \in \NN
              \end{cases}
  \end{align*}
  fulfilling $g(-t) = - g(t)$ for all $t\in \RR$ and 
  $\abs{g(x_n)} = g(\abs{x_n}) \le \alpha_{k} \le 1+\tilde{p}_{k-1} \le 1+p(x)$ for all $x\in X$
  and all $n\in \{1,\dots,N\}$ and for a suitable $k\in \NN$ such that $k-1 \le \abs{x_n} \le k$.
  This function $g$
  is strictly increasing, i.e.~$g(t) < g(t')$ for all $t,t' \in \RR$ with $t< t'$, hence injective
  and open. Moreover, $\lim_{t\to\pm \infty} g(t) = \pm \infty$ holds, so $g$ is also surjective and therefore
  is a homeomorphism. As a consequence, the map
  $g^{\times N} \colon \RR^N \to \RR^N$, $(x_1,\dots,x_N) \mapsto \big(g(x_1), \dots,g(x_N)\big)$ is a homeomorphism, too.
  Now define $Y$ as the image of $X$ under $g^{\times N}$, which is a closed subset
  of $\RR^N$ because $g^{\times N}$ is a homeomorphism. Define
  $\phi \coloneqq X \to Y$ as $\phi(x) \coloneqq g^{\times N}(x)$ for all $x\in X$, which is a well-defined homeomorphism,
  and $\mathcal{J} \coloneqq \set[\big]{f\in\Stetig(Y)}{f\circ \phi \in \mathcal{I}}$. Then it is easy to check
  that $\mathcal{J}$ is an intermediate $^*$\=/subalgebra of $\Stetig(Y)$ and that the map 
  $\Phi \colon \mathcal{J} \to \mathcal{I}$, $f \mapsto \Phi(f) \coloneqq f\circ \phi$
  is an isomorphism of ordered $^*$\=/algebras whose inverse is given by $\mathcal{I} \ni f\mapsto f\circ \phi^{-1} \in \mathcal{J}$.
  Moreover, $\pr_{Y;n} \in \mathcal{J}$ for all $n\in \{1,\dots,N\}$
  because $\abs{\pr_{Y;n} \circ \phi} = \abs{g \circ\pr_{X;n}} \le \Unit_\mathcal{I} + p$ shows that $\pr_{Y;n} \circ \phi \in \mathcal{I}$  by Proposition~\ref{proposition:intermediateSus}.
  So $(Y,\mathcal{J},\Phi)$ is a continuous calculus for $g \circ\pr_{X;1}, \dots, g \circ\pr_{X;N} \in \mathcal{I}_\Hermitian$
  and $\Phi$ maps surjectively onto $\mathcal{I}$. It follows that the universal continuous
  calculus for $g \circ\pr_{X;1}, \dots, g \circ\pr_{X;N}$ also maps surjectively onto $\mathcal{I}$,
  hence is an isomorphism (one actually finds that $(Y,\mathcal{J},\Phi)$ is the universal continuous calculus
  because $\Phi$ is an isomorphism). 
  By Theorem~\ref{theorem:univcc}, $\mathcal{I} \cong \mathcal{F}( g \circ\pr_{X;1}, \dots, g \circ\pr_{X;N} )$
  is a commutative \Sus\=/algebra of finite type and $g \circ\pr_{X;1}, \dots, g \circ\pr_{X;N}$ are generators of $\mathcal{I}$.
\end{proof}
The converse is also true:
\begin{theorem} \label{theorem:representation}
  For every commutative \Sus\=/algebra of finite type $\mathcal{A}$ with
  generators $a_1,\dots,a_N \in \mathcal{A}_\Hermitian$, $N\in \NN$, the universal continuous calculus
  $\Gamma_{a_1,\dots,a_N} \colon \mathcal{F}(a_1,\dots,a_N) \to \mathcal{A}$ is an isomorphism of ordered $^*$\=/algebras,
  and thus allows to identify $\mathcal{A}$ with the proper \Sus\=/algebra of continuous functions $\mathcal{F}(a_1,\dots,a_N)$
  on the closed subset $\spec(a_1,\dots,a_N)$ of $\RR^N$.
\end{theorem}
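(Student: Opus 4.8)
The plan is to deduce this from Theorem~\ref{theorem:univcc}, which already provides that $\Gamma \coloneqq \Gamma_{a_1,\dots,a_N}$ is an injective positive unital $^*$\=/homomorphism and an order embedding with image in $\{a_1,\dots,a_N\}'' = \mathcal{A}$; the only thing left to prove is that $\Gamma$ is surjective, for then $\Gamma^{-1}$ is a unital $^*$\=/homomorphism (being the inverse of a bijective one) which is positive because $\Gamma$ is an order embedding, so $\Gamma$ is an isomorphism of ordered $^*$\=/algebras, which identifies $\mathcal{A}$ with the proper \Sus\=/algebra of continuous functions $\mathcal{F}(a_1,\dots,a_N)$ on $\spec(a_1,\dots,a_N)$. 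Throughout I abbreviate $\mathcal{F} \coloneqq \mathcal{F}(a_1,\dots,a_N)$, $Z \coloneqq \spec(a_1,\dots,a_N)$, $p \coloneqq \Unit + a_1^2 + \dots + a_N^2$, $p_\mathcal{F} \coloneqq \Unit + \pr_1^2 + \dots + \pr_N^2 \in \mathcal{F}$ (so $\Gamma(p_\mathcal{F}) = p$) and $\mathcal{C} \coloneqq \genCstar{\{p^{-1}, a_1p^{-1}, \dots, a_Np^{-1}\}}$. By Propositions~\ref{proposition:sphericalcalculus} and~\ref{proposition:bigcalculus} the $C^*$\=/algebra $\mathcal{C}$ is the image of $\Phi_\SS$, hence $\mathcal{C} \subseteq \Gamma(\mathcal{F})$; moreover, since $\Gamma$ is an order embedding, $g^*g \le \seminorm{\infty}{\Gamma(g)}^2 \Unit_\mathcal{F}$ for all $g\in\mathcal{F}$, so $\Gamma^{-1}$ sends the uniformly bounded elements $\mathcal{C} \subseteq \mathcal{A}^\bd$ to uniformly bounded functions on $Z$.

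First I would reduce surjectivity to an invertibility statement inside $\mathcal{C}$ using the finite type hypothesis: for $b\in\mathcal{A}$ write $b = b_\mathrm{n}b_\mathrm{d}^{-1}$ with $b_\mathrm{n}, b_\mathrm{d}\in\mathcal{C}$ and $b_\mathrm{d}$ invertible in $\mathcal{A}$; commutativity gives $b_\mathrm{d}^{-1} = b_\mathrm{d}^*\,(b_\mathrm{d}^*b_\mathrm{d})^{-1}$ with $c \coloneqq b_\mathrm{d}^*b_\mathrm{d} \in \mathcal{C}^+_\Hermitian$ again invertible in $\mathcal{A}$, and $b_\mathrm{n}, b_\mathrm{d}^* \in \mathcal{C} \subseteq \Gamma(\mathcal{F})$, so it suffices to show $c^{-1}\in\Gamma(\mathcal{F})$ for every $c\in\mathcal{C}^+_\Hermitian$ invertible in $\mathcal{A}$. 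Set $f \coloneqq \Gamma^{-1}(c) \in \mathcal{F}^+_\Hermitian$, a bounded function by the previous paragraph. If $f$ is nowhere zero on $Z$, then $1/f \in \Stetig(Z)^+_\Hermitian$ is coercive (as $f$ is bounded above) and $\Gamma\big((1/f)^{-1}\big) = \Gamma(f) = c$ is invertible in $\mathcal{A}$, so the last assertion of Theorem~\ref{theorem:univcc} gives $1/f\in\mathcal{F}$ and hence $c^{-1} = \Gamma(1/f) \in \Gamma(\mathcal{F})$, as wanted.

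The crux, and the step I expect to be the main obstacle, is therefore to prove that $f = \Gamma^{-1}(c)$ does not vanish at any point of $Z$; this is exactly where one must use that $p$ is \emph{proper} in $\mathcal{A}$ in the sense of Definition~\ref{definition:properelement}, not merely invertible. Assume $f(z_0) = 0$ for some $z_0\in Z$. Applying properness of $p$ with the parameter $\lambda \coloneqq 2 + \abs{z_0}^2$ and the positive element $c^{-1}$ yields $\mu\in{[0,\infty[}$ and $h\in\mathcal{A}^+_\Hermitian$ such that for every $\epsilon\in{]0,\infty[}$ there is $k\in\NN_0$ with $c^{-1} \le \mu\Unit + (p/\lambda)^k + \epsilon h$. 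Multiplying by $c\ge 0$ and using commutativity gives $\Gamma(\theta_k) \le \epsilon\, ch$, where $\theta_k \coloneqq \Unit_\mathcal{F} - \mu f - f\,(p_\mathcal{F}/\lambda)^k \in \mathcal{F}$ satisfies $\theta_k(z_0) = 1$ for \emph{every} $k$ because $f(z_0) = 0$. On a small enough open neighbourhood $V$ of $z_0$ in $Z$ one has $p_\mathcal{F} < \lambda$ and $(\mu+1)f \le \tfrac12$, so $(p_\mathcal{F}/\lambda)^k \le 1$ on $V$ and hence $\theta_k \ge \tfrac12$ on $V$ for every $k$; choosing $\phi\in\Stetig(Z)^\bd\subseteq\mathcal{F}$ with $0\le\phi\le\Unit$, $\phi(z_0) = 1$ and $\phi$ vanishing outside $V$, one gets $\tfrac12\phi \le (\theta_k)_+$ on $Z$. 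Since taking positive parts is compatible with the $^*$\=/homomorphism $\Gamma$ between commutative \Sus\=/algebras and is monotone, this yields $\Gamma(\tfrac12\phi) \le \Gamma\big((\theta_k)_+\big) \le \epsilon\, ch$ for every $\epsilon\in{]0,\infty[}$, with $ch\in\mathcal{A}^+_\Hermitian$ a fixed element. As $\mathcal{A}$ is Archimedean this forces $\Gamma(\tfrac12\phi) \le 0$, contradicting $\Gamma(\tfrac12\phi)\ge 0$ together with $\Gamma(\tfrac12\phi)\ne 0$ (the latter by injectivity of $\Gamma$ and $\phi(z_0)\ne 0$). Hence $f$ is nowhere zero on $Z$, which closes the reduction and the proof; apart from this step everything is routine bookkeeping with the results of Sections~\ref{sec:propersus}--\ref{sec:cccon}.
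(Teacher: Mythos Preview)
Your proof is correct and follows essentially the same strategy as the paper: reduce surjectivity via the finite-type hypothesis to showing that the preimage $f=\Gamma^{-1}(c)$ of a positive invertible $c\in\mathcal{C}$ is nowhere zero, then derive a contradiction from properness of $p$ and the Archimedean property, and finally invoke the last clause of Theorem~\ref{theorem:univcc}. The only difference is in the contradiction step: the paper constructs an explicit cutoff $g$ (the positive part of $\tfrac12(\Unit-2(\mu+1)f_{\mathrm d}-\tilde p/\lambda)$) and conjugates the inequality by $\Gamma(\sqrt{g})$ to obtain $\Gamma(g^2)\le\epsilon b_{\mathrm d}c\,\Gamma(g)$, whereas you use a Urysohn bump $\phi$ supported in $V$ and the lattice inequality $(\Gamma(\theta_k))_+\le(\epsilon ch)_+=\epsilon ch$ (valid because $\mathcal{A}_\Hermitian$ is a $\Phi$-algebra, hence a Riesz space, and $\Gamma$ preserves positive parts). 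Both routes are valid; yours leans on the lattice structure of commutative \Sus\=/algebras, while the paper's stays closer to the bare ordered $^*$\=/algebra axioms via square-root conjugation.
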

\begin{proof}
  Let $\mathcal{A}$ be a commutative \Sus\=/algebra of finite type with generators $a_1,\dots,a_N \in \mathcal{A}_\Hermitian$, $N\in \NN$,
  so $p \coloneqq \Unit + a_1^2 + \dots + a_N^2$ is proper in $\mathcal{A}$. By Theorem~\ref{theorem:univcc}, the universal continuous 
  calculus for $a_1,\dots,a_N$ exists and $\Gamma_{a_1,\dots,a_N} \colon \mathcal{F}(a_1,\dots,a_N) \to \mathcal{A}$
  is an injective unital $^*$\=/homomorphism and even an order embedding. We have to show that $\Gamma_{a_1,\dots,a_N}$
  is also surjective:
  
  Given $b\in \mathcal{A}$, then there are numerator and denominator
  $b_\mathrm{n}',b_\mathrm{d}' \in \genCstar{\{p^{-1}, a_1 p^{-1}, \dots, a_N p^{-1}\}}$
  such that $b_\mathrm{d}'$ is invertible in $\mathcal{A}$ and $b = b_\mathrm{n}' (b'_\mathrm{d})^{-1}$.
  By setting $b_\mathrm{n} \coloneqq b_\mathrm{n}' (b_\mathrm{d}')^*, b_\mathrm{d} \coloneqq b_\mathrm{d}'(b_\mathrm{d}')^*$
  one obtains numerator and denominator
  $b_\mathrm{n},b_\mathrm{d} \in \genCstar{\{p^{-1}, a_1 p^{-1}, \dots, a_N p^{-1}\}}$
  such that $b_\mathrm{d}$ is positive Hermitian and invertible in $\mathcal{A}$ and $b = b_\mathrm{n} b_\mathrm{d}^{-1}$.
  By Proposition~\ref{proposition:sphericalcalculus} and the subsequent construction of the
  universal continuous calculus in Proposition~\ref{proposition:bigcalculus} and Theorem~\ref{theorem:univcc},
  the $C^*$\=/subalgebra $\genCstar{\{p^{-1}, a_1 p^{-1}, \dots, a_N p^{-1}\}}$
  of $\mathcal{A}^\bd$ lies in the image of $\Gamma_{a_1,\dots,a_N}$, so there especially exist
  $f_\mathrm{n},f_\mathrm{d} \in \mathcal{F}(a_1,\dots,a_N)$ such that $\Gamma_{a_1,\dots,a_N}(f_\mathrm{n}) = b_\mathrm{n}$
  and $\Gamma_{a_1,\dots,a_N}(f_\mathrm{d}) = b_\mathrm{d}$. As $\Gamma_{a_1,\dots,a_N}$ is
  injective and an order embedding, $f_\mathrm{d}$ is positive Hermitian and uniformly bounded.
  Moreover, even $f_\mathrm{d} (x) > 0$ holds for all $x\in \spec(a_1,\dots,a_N)$:
  
  Assume to the contrary that there is some $\hat{x}\in \spec(a_1,\dots,a_N)$ for which $f_\mathrm{d} (\hat{x}) = 0$.
  In this case, let $\tilde{p} \coloneqq \Unit + \pr_1^2 + \dots + \pr_N^2 \in \mathcal{F}(a_1,\dots,a_N)^+_\Hermitian$
  and $\lambda \coloneqq \tilde{p}(\hat{x}) + 1 \in {[1,\infty[}$, then
  $\Gamma_{a_1,\dots,a_N}(\tilde{p}) = p$ is proper in $\mathcal{A}$ and therefore
  there are $\mu \in {[0,\infty[}$ and $c\in \mathcal{A}^+_\Hermitian$ such that for every $\epsilon \in {]0,\infty[}$
  there is a $k\in \NN_0$ for which $b_{\mathrm{d}}^{-1} \le \mu \Unit_\mathcal{A} + (p/\lambda)^k + \epsilon c$ holds.
  Construct $g \in \mathcal{F}(a_1,\dots,a_N)_\Hermitian^+$ as the positive part of
  $\frac{1}{2} \big( \Unit - 2(\mu+1) f_{\mathrm{d}} - \tilde{p}/\lambda \big)$, 
  then $0 \le g \le \Unit /2$, $g(\hat{x}) = (2\lambda)^{-1} > 0$ and $g(x) = 0$ holds for all $x\in \spec(a_1,\dots,a_N)$ for which
  $\tilde{p}(x) \ge \lambda$ and also for all $x\in \spec(a_1,\dots,a_N)$ for which $(\mu+1) f_{\mathrm{d}}(x) \ge 1/2$.
  Multiplying the inequality $b_{\mathrm{d}}^{-1} \le \mu \Unit + (p/\lambda)^k + \epsilon c$ with $b_{\mathrm{d}} = (b_\mathrm{d}')^* b_\mathrm{d}'$
  yields $\Unit \le \big(\mu \Unit + (p/\lambda)^k\big)b_{\mathrm{d}} + \epsilon b_{\mathrm{d}} c$, and as $\Unit$, $b_{\mathrm{d}}$
  and $p$ have (necessarily unique) preimages under $\Gamma_{a_1,\dots,a_N}$, one obtains the estimate
  \begin{align*}
    \Gamma_{a_1,\dots,a_N} \Big( \Unit - \big(\mu \Unit + (\tilde{p}/\lambda)^k\big)f_{\mathrm{d}} \Big) \le \epsilon b_{\mathrm{d}} c
  \end{align*}
  and therefore also
  \begin{align*}
    \Gamma_{a_1,\dots,a_N} (g^2)
    \le
    \Gamma_{a_1,\dots,a_N}(\sqrt{g}) \,\Gamma_{a_1,\dots,a_N}\Big( \Unit - \big(\mu \Unit + (\tilde{p}/\lambda)^k\big) f_{\mathrm{d}} \Big) \, \Gamma_{a_1,\dots,a_N}(\sqrt{g})
    \le
    \epsilon b_{\mathrm{d}} c \Gamma_{a_1,\dots,a_N}(g)
    ,
  \end{align*}
  where the first inequality follows from the estimate
  $g^2 \le \big( \Unit - \big(\mu \Unit + (\tilde{p}/\lambda)^k\big) f_{\mathrm{d}}\big)g$
  that can easily be checked pointwise for $x\in \spec(a_1,\dots,a_N)$ 
  by using that $g(x) \neq 0$ only if $\big( \Unit - f_{\mathrm{d}}\big(\mu \Unit + (\tilde{p}/\lambda)^k\big) \big)(x) > 1/2$.
  As $g$ is independent of $k$, this estimate $\Gamma_{a_1,\dots,a_N} (g^2) \le \epsilon b_{\mathrm{d}} c \Gamma_{a_1,\dots,a_N}(g)$
  holds for all $\epsilon \in {]0,\infty[}$, so $\Gamma_{a_1,\dots,a_N} (g^2) = 0$ because $\mathcal{A}$ is Archimedean,
  therefore $g^2 = 0$ because $\Gamma_{a_1,\dots,a_N}$ is injective. But this contradicts $g(\hat{x}) > 0$, so the assumption
  that there exists $\hat{x} \in \spec(a_1,\dots,a_N)$ with $f_{\mathrm{d}}(\hat{x}) = 0$ has to be false.
  
  We thus have seen that $f_\mathrm{d} (x) > 0$ holds for all $x\in \spec(a_1,\dots,a_N)$, which means that the pointwise inverse
  $f_\mathrm{d}^{-1} \in \Stetig\big( \spec(a_1,\dots,a_N) \big)^+_\Hermitian$ exists. Even $f_\mathrm{d}^{-1} \in \mathcal{F}(a_1,\dots,a_N)$
  holds by the criterium from Theorem~\ref{theorem:univcc} because the inverse $f_\mathrm{d}$ of $f_\mathrm{d}^{-1}$ is uniformly
  bounded and $\Gamma_{a_1,\dots,a_N}(f_\mathrm{d}) = b_\mathrm{d}$ invertible in $\mathcal{A}$. It follows that $\Gamma_{a_1,\dots,a_N}(f_\mathrm{n}f_\mathrm{d}^{-1}) = b_\mathrm{n} b_\mathrm{d}^{-1} = b$
  and we conclude that $\Gamma_{a_1,\dots,a_N} \colon \mathcal{F}(a_1,\dots,a_N) \to \mathcal{A}$ is surjective, hence an isomorphism of ordered $^*$\=/algebras.
\end{proof}
In the above proof, the assumption of properness enters in an essential, but perhaps not very intuitiv way. A short example therefore might be helpful:
\begin{example}
  In the commutative \Sus\=/algebra $\Stetig\big({]0,1]}\big)$ the function $\id_{]0,1]} \in \Stetig\big({]0,1]}\big){}^\bd$ alone is not a sufficient tuple of generators
  because $\spec(\id_{]0,1]}) = {[0,1]}$ by Corollary~\ref{corollary:RRspec}, so that $\mathcal{F}(\id_{]0,1]}) = \Stetig\big([0,1]\big) = \Stetig\big([0,1]\big){}^\bd$
  is uniformly bounded and its image under $\Gamma_{\id_{]0,1]}}$ is only a unital $^*$\=/subalgebra of $\Stetig\big({]0,1]}\big){}^\bd$.
  While one can express every $f\in \Stetig\big({]0,1]}\big)$ as a quotient $f = f_{\mathrm{n}} f_{\mathrm{d}}^{-1}$
  with $f_{\mathrm{n}}, f_{\mathrm{d}}$ in the image of $\Gamma_{\id_{]0,1]}}$ and $f_{\mathrm{d}}$ invertible in $\Stetig\big({]0,1]}\big)$,
  e.g.~$f_{\mathrm{d}} = ( f^*f+(\id_{]0,1]})^{-1})^{-1}$ and $f_{\mathrm{n}} = f ( f^*f+(\id_{]0,1]})^{-1})^{-1}$,
  the preimage of such an invertible $f_{\mathrm{d}}$ under $\Gamma_{\id_{]0,1]}}$ need not be invertible:
  Indeed, $\id_{[0,1]} \in \mathcal{F}(\id_{]0,1]})$ is the preimage of $\id_{]0,1]} \in \Stetig\big({]0,1]}\big)$.
  This all relates to $\Unit+(\id_{]0,1]})^2 \in \Stetig\big({]0,1]}\big)$ not being proper.
\end{example}
As a final remark we note that Proposition~\ref{proposition:niceMorphisms} is especially interesting in connection with the above Theorem~\ref{theorem:representation}:

Let $\mathcal{A}$ and $\mathcal{B}$ be two commutative \Sus\=/algebras of finite type with generators $a_1,\dots,a_M \in \mathcal{A}_\Hermitian$
and $b_1,\dots,b_N \in \mathcal{B}_\Hermitian$, $M,N\in \NN$, which thus are isomorphic as ordered $^*$\=/algebras
to the proper \Sus\=/algebras of continuous functions $\mathcal{F}(a_1,\dots,a_M)$ and $\mathcal{F}(b_1,\dots,b_N)$
via the isomorphisms $\Gamma_{a_1,\dots,a_M}$ and $\Gamma_{b_1,\dots,b_N}$, respectively.
Then for every unital $^*$\=/homomorphism $\Phi \colon \mathcal{A} \to \mathcal{B}$ there exists a unique continuous map 
$\phi \colon \spec(b_1,\dots,b_N) \to \spec(a_1,\dots,a_M)$ such that $f\circ \phi \in \mathcal{F}(b_1,\dots,b_N)$ and 
$\Phi\big( \Gamma_{a_1,\dots,a_M}(f)\big) = \Gamma_{b_1,\dots,b_N}(f\circ \phi)$ hold for all $f\in\mathcal{F}(a_1,\dots,a_M)$.

\end{onehalfspace}
\end{document}